\documentclass[12pt,twoside]{amsart}
 \usepackage{amsmath,amssymb,amsthm,amsfonts,latexsym}
 \usepackage{exscale,relsize}
 \usepackage[all]{xy}
 \usepackage[colorlinks,urlcolor=blue,linkcolor=blue,linktocpage=magenta]{hyperref}

 \textwidth=16.00cm \textheight=22.00cm \topmargin=0.00cm \oddsidemargin=0.00cm
 \evensidemargin=0.00cm \headheight=0cm \headsep=1cm
 \setlength{\parskip}{3pt} \setlength{\parindent}{.8cm}

 \newtheorem{theorem}{Theorem}[section]
 \newtheorem{lemma}[theorem]{Lemma}
 \newtheorem{proposition}[theorem]{Proposition}
 \newtheorem{corollary}[theorem]{Corollary}
 \theoremstyle{remark}
 
 \newtheorem{example}[theorem]{\bf Example}
 \newtheorem{remark}[theorem]{\bf Remark}
 
 \newtheorem{conjecture}[theorem]{\bf Conjecture}

 \numberwithin{equation}{section}

 \newcommand{\Z}{\mathbb{Z}}
 \newcommand{\Q}{\mathbb{Q}}
 \newcommand{\CC}{C}
 \newcommand{\F}{\mathbb{F}}
 \newcommand{\D}{\Delta}
 \renewcommand{\d}{\sigma}

 \newcommand{\ra}{\rightarrow}
 \newcommand{\bs}{\backslash}
 \newcommand{\wt}{\widetilde}
 \newcommand{\cd}{\circledast}
 
 \pagestyle{plain}
 \parindent=0.5cm

%================================================
\begin{document}
%================================================

 \baselineskip=0.7cm

 \title{Hamming Polynomial of a Demimatroid}
 \author{Jos\'e Mart\'{\i}nez-Bernal, Miguel A. Valencia-Bucio\ \ and\ \ Rafael H. Villarreal}
 \address{Departamento de Matem\'aticas, Cinvestav-IPN, M\'exico\\
 \scriptsize A.P. 14-740, Ciudad de M\'exico 07360}
 \email{\{jmb, mavalencia, vila\}@math.cinvestav.mx}
 \keywords{Betti numbers, Combinatroids, Demimatroids, Hamming weigth enumerator,
 Linear codes, Matroids, Rank function, Simplicial complex, Stanley-Reisner,
 Tutte polynomial, Wei duality.}
 \subjclass[2000]{05B35, 05E45, 13F55, 05C22}
 \maketitle

{\bf Abstract} Following Britz, Johnsen, Mayhew and Shiromoto, we consider
demi\-ma\-troids as a(nother) natural generalization of matroids. As they have
shown, demi\-ma\-troids are the appropriate combinatorial objects for studying
Wei's duality. Our results here apport further evidence about the trueness of
that observation. We define the Hamming polynomial of a demimatroid $M$, denoted
by $W(x,y,t)$, as a generalization of the extended Hamming weight enumerator of
a matroid. The polynomial $W(x,y,t)$ is a specialization of the Tutte polynomial
of $M$, and actually is equivalent to it. Guided by work of Johnsen, Roksvold
and Verdure for matroids, we prove that Betti numbers of a demimatroid and its
elongations determine the Hamming polynomial. Our results may be applied to
simplicial complexes since in a canonical way they can be viewed as
demimatroids. Furthermore, following work of Brylawski and Gordon, we show how
demimatroids may be generalized one step further, to combinatroids. A
combinatroid, or Brylawski structure, is an integer valued function $\rho$,
defined over the power set of a finite ground set, satisfying the only condition
$\rho(\emptyset)=0$. Even in this extreme generality, we will show that many
concepts and invariants in coding theory can be carried on directly to
combinatroids, say, Tutte polynomial, characteristic polynomial, MacWilliams
identity, extended Hamming polynomial, and the $r$-th generalized Hamming
polynomial; this last one, at least conjecturelly, guided by the work of Jurrius
and Pellikaan for linear codes. All this largely extends the notions of
deletion, contraction, duality and codes to non-matroidal structures.

\newpage
\tableofcontents

%------------------------------------------------
\section{Introduction}
%------------------------------------------------

Matroids are combinatorial objects introduced by Whitney in 1935 as a
generalization of both graphs and matrices. They capture geometric and
combinatorial properties of linear independence over finite structures.
Demimatroids (Section~\ref{defdemi}) are a generalization of matroids, and in
what follows we will show how demimatroids may be generalized one step further
to combinatroids, via the rank function. We will show that combinatroids capture
many concepts related with duality in coding theory and matroids. For instance,
we define invariants as the Tutte polynomial, the generalized Hamming polynomial
and the extended Hamming polynomial; or relationships between them, as deletion,
contraction and the MacWilliams identity.

Denote by ${\mathcal C}$ the family of combinatroids defined over the same
ground set $E$, and by ${\mathcal D}$ the smaller subfamily of demimatroids. The
four operations: identity, dual, nullity and supplement
(Section~\ref{defsupplement}), may be seen as duality operators acting on
${\mathcal C}$, actually, these last three operators form a triality, in the
sense that the composition of two of them results in the third one. The
restriction to ${\mathcal D}$ of these operators behave even better: ${\mathcal
D}$ has a natural structure of a bounded distributive lattice, and each
demimatroid determines a weight hierarchy and a Duursma zeta function, which is
a largely extension of well-known results for linear codes. All these facts show
that ${\mathcal D}$ is a mathematical object that merits a further study.

As a final result, by extending work of Johnsen, Roksvold and Verdure for
matroids, we prove that Betti numbers of a demimatroid and its elongations
determine the extended Hamming polynomial of a demimatroid. All these results
may be applied to simplicial complexes since in a canonical way they can be
viewed as demimatroids. For unexplained notions of graph theory, linear codes
and matroids we refer to \cite{Harary-z}, \cite{Huffman-Pless-z} and
\cite{Oxley-z}, respectively.

%----------------------------------------------------------
\section{Matroids and linear codes}
%----------------------------------------------------------

A \emph{matroid} is a pair $M=(E,\rho)$, where $E$ is a finite set called the
\emph{ground set} of $M$, and $\rho:2^E\ra\Z_+:=\{0,1,\ldots\}$ is a function
satisfying:
 \begin{itemize}
 \item[$(R_0)$] $\rho(\emptyset)=0$;
 \item[$(R_1)$] If $X\subseteq E$ and $x\in E$, then
 $\rho(X)\leq\rho(X\cup\{x\})\leq\rho(X)+1$;
 \item[$(R_2)$] If $X,Y\subseteq E$, then $\rho(X\cup Y)+\rho(X\cap
 Y)\leq\rho(X)+\rho(Y)$.
 \end{itemize}

The function $\rho$ is called the \emph{rank function} of the matroid. Condition
$(R_2)$ is known as the \emph{submodularity} condition. An \emph{independent
set} of $M$ is a subset $X\subseteq E$ such that $\rho(X)=|X|$, where $|X|$
denotes the cardinality of $X$; in particular the empty set is always an
independent set. A \emph{basis} is an in\-clu\-sion maximal independent set; one
can verify that bases of a matroid are equicardinal. A subset of the ground set
which is not independent is called a \emph{dependent set}, and a \emph{circuit}
is a minimal dependent set.

Let $X$ be a subset of $E$. From $(R_0)$ and $(R_1)$, and by a direct induction
argument, if follows that $0\leq\rho(X)\leq|X|$ for all $X\subseteq E$. The
\emph{nullity} of $X$, denoted by $\eta(X)$, is defined as
$\eta(X):=|X|-\rho(X)$. In particular, the nullity of $M$ is defined as
$\eta(M):=\eta(E)$. The \emph{$r$-generalized Hamming weight} of the matroid $M$
is given by
\[d_r(M):=\min\{|X|:\eta(X)=r\},\quad\quad 1\leq r\leq \eta(E),\]
and the sequence $d_1(M),\ldots,d_{\eta(E)}(M)$ is called the \emph{weight
hierarchy} of $M$.

\bigskip Let $p$ be a prime, $q$ a positive power of $p$ and $\F_q$ a field with
$q$ elements. A \emph{linear} $[n,k]_q$ \emph{code} is a $k$-dimensional
subspace $C$ of $\F_q^n$. In this context the field $\F_q$ is called the
\emph{alphabet}, the elements of $\F_q^n$ are the \emph{words} and the elements
of $C$ are called \emph{codewords} of the code. We consider $\F_q^n$ provided
with its \emph{Hamming distance}, which is the number of coordinates in which
two words differ. For $c\in C$ its \emph{weight}, denoted by $w(c)$, is the
number of its nonzero coordinates. For a subset $X$ of $\F_q^n$ we define the
\emph{support} of $X$, denoted $\text{supp}(X)$, as the union of all the
supports of elements in $X$, i.e.
\[\text{supp}(X):=\{i:\exists (c_1,\ldots,c_n)\in C\ \text{such that } c_i\neq
0\}.\] Let $C$ be a linear $[n,k]_q$ code. For $1\leq r\leq k$, the $r$-th
\emph{generalized Hamming weight} of $C$ is defined as
\[d_r(C):=\min\{|\text{supp}(X)|:X\ \text{is a $r$-dimensional subspace of } C\}.\]
The number $d_1(C)$ is known as the \emph{minimum distance} of the code and the
sequence $d_1(\CC),\ldots,d_k(\CC)$ is called the \emph{weight hierarchy} of
$C$.

With each linear code $C$ one associate the vector matroid $M[H]$ on the ground
set $E=\{1,\ldots,n\}$, where $H$ is a parity check matrix of $C$. The rank
function of $M[H]$ is given by $\rho(X):=\text{rank}(H_X)$ for $X\subseteq E$,
where $H_X$ is the submatrix of $H$ obtained by picking the columns indexed by
$X$. The matroid $M[H]$ does not depend on the parity check matrix we use. We
call $M[H]$ the \emph{(parity) matroid} of $C$. A basic result in this area,
relating codes and matroids, is that the weight hierarchies of both the code $C$
and the matroid $M[H]$ coincide~\cite{Wei-z}.

%------------------------------------------------
\section{Demimatroids}
%------------------------------------------------

A \emph{demimatroid} is a pair $M=(E,\rho)$, where $E$ is a finite set called
the \emph{ground set} of $M$, and $\rho:2^E\ra\Z_+$ is a function such that
 \begin{itemize}\label{defdemi}
 \item[$(R_0)$] $\rho(\emptyset)=0$;
 \item[$(R_1)$] If $X\subseteq E$ and $x\in E$, then
 $\rho(X)\leq\rho(X\cup\{x\})\leq\rho(X)+1$;
 \end{itemize}
 The function $\rho$ is called the \emph{rank function} of the demimatroid.
 Clearly matroids are examples of demimatroids. By abuse of notation
 we will frequently refer to $\rho$ itself as the demimatroid. The
 \emph{rank} of a demimatroid $M$ is defined as $\rho(M):=\rho(E)$.
 A straightforward verification shows that $0\leq\rho(X)\leq|X|$ for all $X\subseteq E$.
 We define the \emph{nullity} of $X$ as $\eta(X):=|X|-\rho(X)$. The
 nullity of a demimatroid $M$ is defined as $\eta(M):=\eta(E)$.
The \emph{dual} of a demimatroid $M=(E,\rho)$ is the pair $M^*:=(E,\rho^*)$,
where
\[\rho^*(X):=|X|+\rho(E\bs X)-\rho(E).\] Clearly $\rho^*(\emptyset)=0$. To
simplify notation, from here one we will write $X\bs x$ and $X\cup x$ instead of
$X\bs\{x\}$ and $X\cup\{x\}$, respectively. If $x\in X$, obviously $(R_1)$ is
satisfied, and if $x\notin X$, then $\rho^*(X)\leq\rho^*(X\cup
x)\leq\rho^*(X)+1$ if and only if $\rho(E\bs X)\leq\rho((E\bs X)\bs
x)+1\leq\rho(E\bs X)+1$. But each of these last two inequalities readily follows
from the properties of $\rho$. So, in fact, $\rho^*$ is a demimatroid. Moreover,
one can verify that $M^{**}=M$; to see this just note that
$\rho(E)+\rho^*(E)=|E|$, and then
\[\rho^{**}(X)=|X|+\rho^*(E\bs X)-\rho^*(E)=|E|+\rho(X)-|E|
=\rho(X).\]

As in the case of matroids, we define \emph{independent sets} of a demimatroid
as those $X\subseteq E$ such that $\rho(X)=|X|$; and in a similar fashion, one
might define bases, dependent sets and circuits. But in this generality we must
remark that bases of a demimatroid are not necessarily equicardinal.

\begin{example} Let $E$ be a finite set and $\rho:2^E\ra\Z_+$
given by:
\begin{itemize}
 \item[(i)] $\rho(X)=0$ for all $X\subseteq E$. Then $\rho$ is a demimatroid;
called the \emph{trivial} demimatroid.

 \item[(ii)] $\rho(X)=|X|$ for all $X\subseteq E$. Then $\rho$ is a demimatroid; actually it
is a matroid.

 \item[(iii)] $\rho(X)=0$ if $X\neq E$ and $\rho(E)=1$. Then $\rho$ is a demimatroid;
if $E$ has at least two elements, then $\rho$ is not a matroid.

 \item[(iv)] $\rho(\emptyset)=0$ and $\rho(X)=1$ for all $X\neq \emptyset$. Then $M=(E,\rho)$
is a demimatroid.
\end{itemize}
\end{example}

\begin{example} Let $M=(E,\rho)$ be a nontrivial dematroid.
For $X\subseteq E$ define $\rho^\bullet(X)=\rho(X)$ if $\rho(X)<\rho(E)$ and
$\rho^\bullet(X)=\rho(X)-1$ if $\rho(X)=\rho(E)$. Then $(E,\rho^\bullet)$ is a
demimatroid.\end{example}

A \emph{simplicial complex} $\D$ on a finite vertex set $E$ is an inclusion
closed family of subsets of $E$, i.e. $\sigma\in\D$ and $\tau\subseteq\sigma$
implies $\tau\in\D$. Elements of $\D$ are called \emph{faces} and maximal faces
are called \emph{facets}. A face of $\D$ whose cardinality is $i+1$ is said to
be of \emph{dimension} $i$. The \emph{dimension} of $\D$ is the maximum
dimension of any one of its faces.

\begin{example}\label{updemi} Let $\D$ be a simplicial complex on the vertex set $E$.
We define the demimatroid $\D^\uparrow:=(E,\rho)$, where, for all $X\subseteq
E$, \[\rho(X):=\max\{|\sigma|: \sigma\subseteq X, \sigma\in\D\}.\]\end{example}

\begin{example} A graph may be viewed as a $1$-dimensional simplicial complex,
and then as a demimatroid. Say the graph has no isolated vertices and let $E$
denote the vertex set. Thus, in this case, the demimatroid in
Example~\ref{updemi} is given by $\rho(\emptyset)=0$, $\rho(X)=1$ if $X$ is and
independent vertex set of $G$, and $\rho(X)=2$ if $X$ is not an independent
vertex set of $G$.
\end{example}

\begin{example} Let $M=(E,\rho)$ be a demimatroid. If $\rho(X)=|X|$ for some
$X\subseteq E$, then $\rho(Y)=|Y|$ for all $Y\subseteq X$. Therefore, the set
\[M^\downarrow:=\{X\subseteq E:\rho(X)=|X|\}\] is a simplicial
complex.\end{example}

\begin{example} Let $\D$ be a simplicial complex with vertex set $E$ and
$\rho:2^E\ra\Z_+$ given by $\rho(X)=|X|$ if $X\in\D$ and $\rho(X)= |X|-1$ if
$X\notin\D$. Then $\D^\sharp:=(E,\rho)$ is a demimatroid.\end{example}

\begin{example}\label{nullsc} Let $M=(E,\rho)$ be a demimatroid. Since $\rho$ is
non-decreasing, it follows that, for all nonnegative integers $r$, the set
$M_{(r)}:=\{X\subseteq E:\rho(X)\leq r\}$ is a simplicial complex.\end{example}

Let $E$ be a finite set. Denote by ${\mathcal S}$ the family of all simplicial
complexes with ground set $E$, and make ${\mathcal S}$ a poset defining
$\D\leq\Gamma$ when $\Delta\subseteq\Gamma$. Denote by ${\mathcal D}$ the family
of all demimatroids with ground set $E$, and make ${\mathcal D}$ a poset by
defining $(E,\rho)\leq (E,\tau)$ when $\rho(X)\leq\tau(X)$ for all $X\subseteq
E$. The next lemma is not hard to prove.

\begin{lemma}
\begin{itemize}
 \item[(i)] $\D\leq\Gamma$ implies $\D^\uparrow\leq\Gamma^\uparrow$;
 \item[(ii)] $(E,\rho)\leq (E,\tau)$ implies $(E,\rho)^\downarrow\leq
 (E,\tau)^\downarrow$;
 \item[(iii)] $(\D^\uparrow)^\downarrow=\D$.
 \item[(iv)] $M^\downarrow=\D$ implies $\D^\uparrow\leq M$; in particular,
 $(M^\downarrow)^\uparrow\leq M$.
 \item[(v)] $(M^\downarrow)^\uparrow=M$ if and only if $M=\D^\uparrow$ for some
 simplicial complex $\D$.
\end{itemize}
\end{lemma}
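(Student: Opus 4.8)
The plan is to establish the five items essentially in the given order, since each later one leans on the earlier ones. Throughout I write $\rho$ for the rank function of the demimatroid under discussion and use the two standing facts proved above: every such rank function satisfies $0\leq\rho(X)\leq|X|$, and by $(R_1)$ it is non-decreasing. For (i), I would note that enlarging the complex only enlarges the set over which the defining maximum of the $\uparrow$ operation is taken: if $\Delta\subseteq\Gamma$, then for each $X$ the family $\{\sigma\subseteq X:\sigma\in\Delta\}$ is contained in $\{\sigma\subseteq X:\sigma\in\Gamma\}$, so the corresponding maxima obey $(\Delta^\uparrow)(X)\leq(\Gamma^\uparrow)(X)$, which is exactly $\Delta^\uparrow\leq\Gamma^\uparrow$. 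For (ii), I would write $\rho$ and $\tau$ for the rank functions of the two demimatroids with $\rho\leq\tau$; the point is the sandwich $\rho(X)\leq\tau(X)\leq|X|$, so that $\rho(X)=|X|$ forces $\tau(X)=|X|$, i.e. $(E,\rho)^\downarrow\subseteq(E,\tau)^\downarrow$.

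Item (iii) carries the real content, and I would prove it through one elementary observation. Writing $\rho$ for the rank function of $\Delta^\uparrow$, the equality $\rho(X)=\max\{|\sigma|:\sigma\subseteq X,\ \sigma\in\Delta\}=|X|$ holds if and only if $X$ itself belongs to $\Delta$. Indeed, a face $\sigma\subseteq X$ with $|\sigma|=|X|$ must coincide with $X$, which gives the forward direction; conversely, if $X\in\Delta$ then $X$ is an admissible competitor in the maximum, forcing $\rho(X)\geq|X|$ and hence $\rho(X)=|X|$. This yields both inclusions of $(\Delta^\uparrow)^\downarrow=\Delta$ simultaneously.

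For (iv), I would assume $M^\downarrow=\Delta$, fix $X$, and choose a face $\sigma\subseteq X$ with $\sigma\in\Delta$ realizing the maximum defining $\Delta^\uparrow$, so that $(\Delta^\uparrow)(X)=|\sigma|$. Since $\sigma\in\Delta=M^\downarrow$ we have $\rho(\sigma)=|\sigma|$, and monotonicity of $\rho$ together with $\sigma\subseteq X$ gives $(\Delta^\uparrow)(X)=|\sigma|=\rho(\sigma)\leq\rho(X)$; as $X$ was arbitrary this is $\Delta^\uparrow\leq M$. Because $M^\downarrow$ is itself a simplicial complex, the stated ``in particular'' is just the special case $\Delta=M^\downarrow$. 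Item (v) is then formal: the forward implication is immediate by taking $\Delta=M^\downarrow$, while for the converse, $M=\Delta^\uparrow$ combined with (iii) gives $M^\downarrow=(\Delta^\uparrow)^\downarrow=\Delta$, whence $(M^\downarrow)^\uparrow=\Delta^\uparrow=M$.

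The only genuinely delicate step is the equality case in the defining maximum used in (iii) and (iv); every other step is either monotonicity of $\rho$ or the elementary bound $\rho(X)\leq|X|$. It is worth recording the structural reading behind these manipulations: items (i)--(iv) say precisely that $\uparrow$ and $\downarrow$ form a monotone Galois connection between the posets $\mathcal{S}$ and $\mathcal{D}$ for which $\downarrow\uparrow$ is the identity on $\mathcal{S}$, and item (v) then identifies the image of $\uparrow$ as exactly the demimatroids fixed by $\uparrow\downarrow$, i.e. the copy of $\mathcal{S}$ sitting inside $\mathcal{D}$.
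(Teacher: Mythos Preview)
Your proof is correct, and for part (iv)---the only item the paper actually proves---your argument is essentially identical: pick $\sigma\subseteq X$ realizing the maximum, use $\sigma\in\Delta=M^\downarrow$ to get $\rho(\sigma)=|\sigma|$, and apply monotonicity of $\rho$. Your treatments of (i)--(iii) and (v), which the paper leaves to the reader, are clean and correct, and your closing remark about the Galois connection anticipates exactly the proposition that follows the lemma in the paper.
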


\begin{proof} (iv): Say $M=(E,\rho)$ and $\D^\uparrow=(E,\tau)$. Take any
$X\subseteq E$. $\tau(X)=\max\{|\sigma|:\sigma\subseteq X,\
\rho(\sigma)=|\sigma|\}$. Choose $\sigma\subseteq X$ such that
$\tau(X)=|\sigma|$ and $\rho(\sigma)=|\sigma|$. Then
$\tau(X)=|\sigma|=\rho(\sigma)\leq\rho(X)$.
\end{proof}

\begin{example} Let $\D$ be a simplicial complex and $M$ a demimatroid. Then
$M^\downarrow=\D$ if and only if $\D^\uparrow\leq M\leq\D^\sharp$.\end{example}

A \emph{Galois connection} between two posets $P$ and $Q$ is a pair of functions
$\alpha:P\ra Q$ and $\beta:Q\ra P$ with the properties: (1) both $\alpha$ and
$\beta$ are order-inverting; (2) $p\leq\beta(\alpha(p))$ for all $p\in P$ and
$q\leq\alpha(\beta(q))$ for all $q\in Q$.

\begin{proposition} Let ${\mathcal D}^\text{\rm op}$ denote the dual poset of ${\mathcal D}$.
The maps $^\uparrow:{\mathcal S}\ra{\mathcal D}'$, $\D\mapsto\D^\uparrow$ and
$^\downarrow:{\mathcal D}^\text{\rm op}\ra{\mathcal S}$, $M\mapsto M^\downarrow$
form a Galois connection.\end{proposition}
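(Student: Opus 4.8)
The plan is to verify directly the two defining properties of a Galois connection, taking $P=\mathcal{S}$, $Q=\mathcal{D}^{\text{\rm op}}$, $\alpha={}^\uparrow$ and $\beta={}^\downarrow$. Since every ingredient we need has essentially been recorded in the preceding Lemma, the work amounts to translating parts (i)--(iv) into the language of the definition, the only genuine care being the reversal of inequalities forced by passing to the dual poset $\mathcal{D}^{\text{\rm op}}$. Before anything else I would note that the two maps are well defined: $\Delta^\uparrow$ is a demimatroid (so it lands in $\mathcal{D}^{\text{\rm op}}$) and $M^\downarrow$ is a simplicial complex (so it lands in $\mathcal{S}$), both established earlier.

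For property (1), I would first show that $\alpha$ is order-inverting. If $\Delta\leq\Gamma$ in $\mathcal{S}$, i.e.\ $\Delta\subseteq\Gamma$, then Lemma (i) gives $\Delta^\uparrow\leq\Gamma^\uparrow$ in $\mathcal{D}$; but an inequality in $\mathcal{D}$ is, by definition, the reverse inequality in $\mathcal{D}^{\text{\rm op}}$, so $\Gamma^\uparrow\leq\Delta^\uparrow$ in $\mathcal{D}^{\text{\rm op}}$, which is precisely order-reversal for $\alpha$. Symmetrically, to see that $\beta$ is order-inverting, suppose $M\leq N$ in $\mathcal{D}^{\text{\rm op}}$; this means $N\leq M$ in $\mathcal{D}$, and Lemma (ii) then yields $N^\downarrow\leq M^\downarrow$ in $\mathcal{S}$, again the required reversal.

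For property (2), the first inequality $\Delta\leq\beta(\alpha(\Delta))$ is immediate from Lemma (iii): since $(\Delta^\uparrow)^\downarrow=\Delta$, we even have equality in $\mathcal{S}$. For the second inequality $M\leq\alpha(\beta(M))$, which lives in $\mathcal{D}^{\text{\rm op}}$, I would unwind the dual order: $M\leq(M^\downarrow)^\uparrow$ in $\mathcal{D}^{\text{\rm op}}$ means exactly $(M^\downarrow)^\uparrow\leq M$ in $\mathcal{D}$, and this is the second assertion of Lemma (iv).

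I do not expect a real obstacle here; the proposition is a formal consequence of the Lemma. The only place where an error could creep in is the bookkeeping of the dual poset: one must consistently remember that $\leq$ in $\mathcal{D}^{\text{\rm op}}$ is $\geq$ in $\mathcal{D}$, so that Lemma (i) and (ii) (stated as order-\emph{preservation} into $\mathcal{D}$) become order-\emph{reversal} into $\mathcal{D}^{\text{\rm op}}$, and so that the second half of part (2) matches the ``$(M^\downarrow)^\uparrow\leq M$'' direction of Lemma (iv) rather than its opposite.
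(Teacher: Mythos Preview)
Your proof is correct and is exactly the intended argument: the paper states the proposition without an explicit proof because it follows immediately from parts (i)--(iv) of the preceding Lemma, and your verification is precisely that derivation, with the bookkeeping for the dual poset handled correctly.
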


%------------------------------------------------
\section{Combinatroids}
%------------------------------------------------

Three important operations on matroids are motivated by graph theory: deletion,
contraction and duality. Brylawski realized that it is possible to extend all of
these three operations to any finite set $E$ provided with an arbitrary function
$r:2^E\ra\Z$, see~\cite{Gordon-z}. Thus we define a \emph{combinatroid} (with
values in $\Z$) as a pair $M:=(E,\rho)$, where $E$ is a finite set called the
\emph{ground set} of $M$, and $\rho:2^E\ra\Z$ is a function satisfying the only
condition $\rho(\emptyset)=0$. The function $\rho$ is called the \emph{rank
function} of the combinatroid. Clearly demimatroids are examples of
combinatroids. Another name for a combinatroid is a (normalized) \emph{Brylawski
structure}, as is done in~\cite{Gordon-z}. One define the \emph{dual}
com\-bi\-na\-troid $M^*=(E,\rho^*)$, where $\rho^*$, called the \emph{dual rank
function}, is given by
\[\rho^*(X)=|X|+\rho(E\bs X)-\rho(E).\]

Then, the \emph{deletion} of $A\subseteq E$, denoted by $M\bs A$, is defined as
the restriction of the rank function $\rho$ to $E\bs A$, i.e. $\rho_{M\bs
A}(X):=\rho(X)$ for all $X\subseteq E\bs A$. Moreover, \emph{contraction},
denoted by $M/A$, is defined using deletion and duality: $M/A:=(M^*\bs A)^*.$
Note that both $M\bs A$ and $M/A$ have the same ground set $E\bs A$.
\begin{proposition} {\rm(Brylawski, Gordon; see~\cite{Gordon-z})} Let $M=(E,\rho)$ be
a combinatroid and $A\subseteq E$.
\begin{itemize}
 \item[(i)] $\rho_{M/A}(X)=\rho(X\cup A)-\rho(A)$ for all $X\subseteq E\bs A$;
 \item[(ii)] $(M^*)^*=M$;
 \item[(iii)] $(M\bs A)^*=M^*/A$;
 \item[(iv)] $(M/A)^*=M^*\bs A$.
\end{itemize}
\end{proposition}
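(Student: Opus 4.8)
The plan is to read all four identities straight off the definitions, taking advantage of the fact that a combinatroid carries no axiom beyond $\rho(\emptyset)=0$: there is nothing to check about independence or submodularity, and each of dual, deletion and contraction visibly sends $\emptyset$ to $0$, so the whole content is the rank identities themselves. I would not follow the listed order but instead establish the involution (ii) first, deduce (iii) and (iv) from it formally, and leave (i) for last as the single genuine computation. For (ii) the argument is exactly the one carried out above for demimatroids, since it uses only the algebraic shape of $\rho^*$: first $\rho^*(E)=|E|+\rho(\emptyset)-\rho(E)=|E|-\rho(E)$, and then for any $X\subseteq E$,
\[\rho^{**}(X)=|X|+\rho^*(E\bs X)-\rho^*(E)=|X|+\bigl(|E\bs X|+\rho(X)-\rho(E)\bigr)-\bigl(|E|-\rho(E)\bigr)=\rho(X),\]
where the final equality is just $|X|+|E\bs X|=|E|$.

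With (ii) available, (iii) and (iv) become formal. Applying the definition $N/A=(N^*\bs A)^*$ of contraction to the combinatroid $N=M^*$ gives $M^*/A=((M^*)^*\bs A)^*$, and (ii) collapses $(M^*)^*$ to $M$, so $M^*/A=(M\bs A)^*$, which is (iii). Likewise $(M/A)^*=\bigl((M^*\bs A)^*\bigr)^*$, and applying (ii) to the combinatroid $M^*\bs A$ turns the right-hand side into $M^*\bs A$, which is (iv). Neither deduction needs (i).

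The one computation to carry out is therefore (i). Writing $E'=E\bs A$ for the common ground set of $M\bs A$ and $M/A$, I would expand $M/A=(M^*\bs A)^*$ in two steps. Deletion gives $\rho_{M^*\bs A}(Y)=\rho^*(Y)=|Y|+\rho(E\bs Y)-\rho(E)$ for $Y\subseteq E'$, and dualizing over the ground set $E'$ then yields, for $X\subseteq E'$,
\[\rho_{M/A}(X)=|X|+\rho_{M^*\bs A}(E'\bs X)-\rho_{M^*\bs A}(E').\]
Here the decisive observation is the pair of complement identities $E'\bs X=E\bs(A\cup X)$ and $E\bs(E'\bs X)=A\cup X$, valid because $X\subseteq E'$ forces $A$ and $X$ to be disjoint. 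Substituting them, the two occurrences of $\rho(E)$ cancel and the size terms collapse via $|X|+|E'\bs X|=|E'|$, leaving precisely $\rho_{M/A}(X)=\rho(X\cup A)-\rho(A)$.

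The hard part here is not any inequality or structural argument --- there is none --- but simply the index bookkeeping: after deleting $A$ the ambient ground set shrinks to $E'=E\bs A$, so the inner dual takes complements relative to $E'$ while the outer dual takes them relative to $E$, and the whole proof turns on the two complement identities above reconciling the two. Once those are recorded, every remaining line is a routine cancellation.
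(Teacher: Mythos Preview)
Your argument is correct in every part: the involution (ii) is the same two-line computation the paper already gave for demimatroids in Section~3, and it uses nothing beyond the algebraic form of $\rho^*$, so it transfers verbatim to arbitrary combinatroids; (iii) and (iv) then follow formally from the definition $M/A=(M^*\bs A)^*$ together with (ii), exactly as you say; and the expansion in (i) is accurate, with the key bookkeeping step being the identity $E\bs(E'\bs X)=A\cup X$ for $X\subseteq E'=E\bs A$, after which the cancellation is routine.

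As for comparison with the paper: there is nothing to compare against. The proposition is stated with attribution to Brylawski and Gordon and a citation to~\cite{Gordon-z}, and no proof is supplied in the paper itself. Your write-up therefore fills in what the paper leaves to the reference, and does so cleanly.
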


\begin{proposition} Let $M=(E,\rho)$ be a demimatroid and $A\subseteq E$. Then
\begin{itemize}
 \item[(i)] $M\bs A$ is a demimatroid;
 \item[(ii)] $M/A$ is a demimatroid.
\end{itemize}\end{proposition}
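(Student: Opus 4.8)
The plan is to verify directly that in each case the new rank function satisfies the two defining axioms $(R_0)$ and $(R_1)$ of a demimatroid; since these are the only conditions required, each part reduces to a short inheritance argument. I would treat deletion first, and then leverage it (together with duality) for contraction.

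For part (i), recall that $\rho_{M\bs A}$ is simply the restriction of $\rho$ to the subsets of $E\bs A$. Axiom $(R_0)$ is immediate, since $\rho_{M\bs A}(\emptyset)=\rho(\emptyset)=0$. For $(R_1)$, take $X\subseteq E\bs A$ and $x\in E\bs A$; then $X$ and $X\cup x$ are genuine subsets of $E$, so the inequalities $\rho(X)\leq\rho(X\cup x)\leq\rho(X)+1$ hold because $\rho$ itself satisfies $(R_1)$. Rewriting these with $\rho_{M\bs A}$ gives exactly $(R_1)$ for the deletion, so $M\bs A$ is a demimatroid on the ground set $E\bs A$.

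For part (ii) I would use the explicit formula $\rho_{M/A}(X)=\rho(X\cup A)-\rho(A)$ supplied by the preceding Brylawski--Gordon proposition, valid for all $X\subseteq E\bs A$. Then $(R_0)$ follows from $\rho_{M/A}(\emptyset)=\rho(A)-\rho(A)=0$. For $(R_1)$, given $X\subseteq E\bs A$ and $x\in E\bs A$, observe that $\rho_{M/A}(X\cup x)=\rho((X\cup A)\cup x)-\rho(A)$; applying $(R_1)$ for $\rho$ to the set $X\cup A$ and the element $x$ yields $\rho(X\cup A)\leq\rho((X\cup A)\cup x)\leq\rho(X\cup A)+1$, and subtracting the constant $\rho(A)$ transports these inequalities verbatim to $\rho_{M/A}$. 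Alternatively, one may avoid the formula and argue by composition: since the class of demimatroids is closed under duality (this is exactly the earlier verification that $\rho^*$ satisfies $(R_0)$ and $(R_1)$) and, by part (i), under deletion, the definition $M/A=(M^*\bs A)^*$ exhibits $M/A$ as a dual of a deletion of a dual, hence again a demimatroid.

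I expect no serious obstacle, as everything rests on the preservation of $(R_0)$ and $(R_1)$ under the relevant constructions. The only point demanding a little care is the bookkeeping of ground sets: in the compositional route the inner duality and deletion act on $E$, whereas the outer duality is taken over the reduced ground set $E\bs A$, so one must note that the duality computation given earlier uses only $(R_0)$ and $(R_1)$ and therefore applies to a demimatroid on \emph{any} finite ground set, not merely on $E$.
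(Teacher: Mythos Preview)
Your proposal is correct and follows essentially the same approach as the paper: both arguments verify $(R_0)$ and $(R_1)$ directly, using the restriction for deletion and the Brylawski--Gordon formula $\rho_{M/A}(X)=\rho(X\cup A)-\rho(A)$ for contraction. Your additional compositional argument via $M/A=(M^*\bs A)^*$ is a nice alternative not spelled out in the paper, and your remark about the outer duality being taken over $E\bs A$ correctly handles the one subtlety there.
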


\begin{proof} Let $X\subset E\bs A$ and $x\in (E\bs A)\bs X$.

(i): $\rho_{M\bs A}(X)\leq\rho_{M\bs A}(X\cup x)\leq\rho_{M\bs A}(X)+1$
$\Leftrightarrow$ $\rho(X)\leq\rho(X\cup x)\leq\rho(X)+1$.

(ii): $\rho_{M/A}(X)\leq\rho_{M/A}(X\cup x)\leq\rho_{M/A}(X)+1$
$\Leftrightarrow$ $\rho(X\cup A)-\rho(A)\leq\rho(X\cup x\cup
A)-\rho(A)\leq\rho(X\cup A)-\rho(A)+1$ $\Leftrightarrow$ $\rho(X\cup
A)\leq\rho(X\cup A\cup x)\leq\rho(X\cup A)+1$.\end{proof}
\medskip A \emph{minor} of a demimatroid $M$ is any demimatroid obtainded from $M$
by a sequence of deletions and contractions.

\bigskip One can also define the \emph{nullity} combinatroid
$M^\circ=(E,\rho^\circ)$, where $\rho^\circ$, called the \emph{nullity
function}, is given by
\[\rho^\circ(X)=|X|-\rho(X).\]
\begin{proposition} Let $M=(E,\rho)$ be a combinatroid. Then
\begin{itemize}
 \item[(i)] $\rho^{*\circ}(X)=\rho^{\circ *}(X)=
 \rho(E)-\rho(E\bs X)$\quad for all $X\subseteq E$;
 \item[(ii)] $(M^\circ)^\circ=M$;
 \item[(iii)] $(M^*)^\circ=(M^\circ)^*$;
 \item[(iv)] If $M$ is a demimatroid, then $M^\circ$ is a demimatroid.
\end{itemize}
\end{proposition}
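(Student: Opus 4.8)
The plan is to establish all four parts by direct substitution into the two defining formulas $\rho^*(X)=|X|+\rho(E\bs X)-\rho(E)$ and $\rho^\circ(X)=|X|-\rho(X)$, organizing the work so that (i) does most of the labor and (ii)--(iv) follow quickly.

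First I would prove (i), which is the workhorse. For $\rho^{*\circ}$ I substitute directly: $\rho^{*\circ}(X)=|X|-\rho^*(X)=|X|-\bigl(|X|+\rho(E\bs X)-\rho(E)\bigr)$, and the $|X|$ terms cancel to give $\rho(E)-\rho(E\bs X)$. For $\rho^{\circ*}$ I would expand $\rho^{\circ*}(X)=|X|+\rho^\circ(E\bs X)-\rho^\circ(E)$ using $\rho^\circ(E\bs X)=|E|-|X|-\rho(E\bs X)$ and $\rho^\circ(E)=|E|-\rho(E)$; here the $|X|$ and $|E|$ contributions cancel, again leaving $\rho(E)-\rho(E\bs X)$. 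The only point meriting care is this cardinality bookkeeping, since the cancellations must be checked rather than assumed.

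Parts (ii) and (iii) then come for free. Part (iii) is immediate from (i): both $(M^*)^\circ$ and $(M^\circ)^*$ carry the rank function $\rho(E)-\rho(E\bs X)$ and share the ground set $E$, so they coincide. Part (ii) is a one-line computation, $\rho^{\circ\circ}(X)=|X|-\rho^\circ(X)=|X|-\bigl(|X|-\rho(X)\bigr)=\rho(X)$, whence $(M^\circ)^\circ=M$.

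For (iv) I would verify the demimatroid axioms $(R_0)$ and $(R_1)$ for $\rho^\circ$. Axiom $(R_0)$ holds since $\rho^\circ(\emptyset)=-\rho(\emptyset)=0$. For $(R_1)$, fix $X\subseteq E$ and $x\in E$; when $x\in X$ the inequalities are trivial, so assume $x\notin X$ and write $\delta:=\rho(X\cup x)-\rho(X)$. A short calculation gives $\rho^\circ(X\cup x)-\rho^\circ(X)=1-\delta$, and since $(R_1)$ for $\rho$ forces $\delta\in\{0,1\}$, one obtains $1-\delta\in\{0,1\}$, which is exactly the required inequality $\rho^\circ(X)\leq\rho^\circ(X\cup x)\leq\rho^\circ(X)+1$. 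I do not expect a genuine obstacle anywhere; the substance of the proposition is simply the structural observation, recorded in (iii), that the nullity and duality operators commute.
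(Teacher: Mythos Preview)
Your proposal is correct and follows essentially the same approach as the paper: direct substitution into the defining formulas for $\rho^*$ and $\rho^\circ$. The paper only writes out part (iv) explicitly, reducing the chain $\rho^\circ(X)\leq\rho^\circ(X\cup x)\leq\rho^\circ(X)+1$ to $\rho(X)+1\geq\rho(X\cup x)\geq\rho(X)$ by expanding the definition, which is exactly your $1-\delta$ argument in slightly different dress; parts (i)--(iii) are left implicit there, and your expansions for them are routine and correct.
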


\proof (iv): Obviously $\rho^\circ(\emptyset)=0$. Let $X\subset E$ and $x\in
E\bs X$. $\rho^\circ(X)\leq\rho^\circ(X\cup x)\leq\rho^\circ(X)+1$ if and only
if $|X|-\rho(X)\leq|X|+1-\rho(X\cup x)\leq|X|-\rho(X)+1$ if and only if
$\rho(X)+1\geq\rho(X\cup x)\geq\rho(X)$.\qed

\medskip Following~\cite{BJMS-z}, we define the \emph{supplement}
combinatroid $M^\cd:=(E,\rho^\cd)$, where $\rho^\cd$, called the
\emph{supplement (or supplementary) function}, is given by
\[\rho^\cd(X)=\rho(E)-\rho(E\bs X).\]\label{defsupplement}
\begin{proposition} Let $M=(E,\rho)$ be a combinatroid. Then
\begin{itemize}
 \item[(i)] $(M^\cd)^\cd=M$;
 \item[(ii)] $(M^*)^\circ=(M^\circ)^*=M^\cd$;
 \item[(iii)] $(M^\circ)^\cd=(M^\cd)^\circ=M^*$;
 \item[(iv)] $(M^*)^\cd=(M^\cd)^*=M^\circ$;
 \item[(v)] {\rm(\cite[Thm. 8]{BJMS-z})} If $M$ is a demimatroid, then
 $M^\cd$ is a demimatroid.
\end{itemize}
\end{proposition}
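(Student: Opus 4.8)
The plan is to exploit the observation that the three operations $*$, $\circ$ and $\cd$, together with the identity, generate a Klein four-group under composition: once we know that $*$ and $\circ$ are commuting involutions and that their composite is $\cd$, every relation in (i), (iii) and (iv) becomes a formal group identity, each of which can in addition be checked by a one-line substitution into the defining formulas.

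First I would record the three structural facts already in hand. The dual is an involution, $(M^*)^*=M$ (Brylawski--Gordon); the nullity is an involution, $(M^\circ)^\circ=M$; and the dual and nullity commute, $(M^*)^\circ=(M^\circ)^*$, the last two being supplied by the preceding proposition. That same proposition also evaluates this common composite as $\rho^{*\circ}(X)=\rho^{\circ*}(X)=\rho(E)-\rho(E\bs X)=\rho^\cd(X)$, which is precisely part (ii); thus (ii) is obtained at no cost and identifies $\cd$ with $*\circ=\circ*$.

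Next I would derive (i), (iii) and (iv) from the relations $**=\mathrm{id}$, $\circ\circ=\mathrm{id}$ and $*\circ=\circ*$, viewing $*$, $\circ$, $\cd$ as self-maps of the family of combinatroids on $E$. For instance $(M^\cd)^\cd=M^{*\circ*\circ}=M^{**\circ\circ}=M$ proves (i); applying $\cd=*\circ$ to $M^\circ$ and commuting gives $(M^\circ)^\cd=(M^*)^{\circ\circ}=M^*$, and likewise $(M^\cd)^\circ=M^*$, which is (iii); finally $(M^*)^\cd=(M^{**})^\circ=M^\circ$ and the symmetric computation give (iv). As an independent safeguard, any of these admits a direct check from the definitions; e.g. for (i) one has $\rho^\cd(E)=\rho(E)-\rho(\emptyset)=\rho(E)$ and $\rho^\cd(E\bs X)=\rho(E)-\rho(X)$, whence $(\rho^\cd)^\cd(X)=\rho^\cd(E)-\rho^\cd(E\bs X)=\rho(E)-[\rho(E)-\rho(X)]=\rho(X)$.

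For (v) I would use the identity $M^\cd=(M^*)^\circ$ from part (ii) and the closure of demimatroids under the two elementary operations: if $M$ is a demimatroid then $M^*$ is a demimatroid (established in Section~\ref{defdemi}), and the nullity of a demimatroid is again a demimatroid (preceding proposition), so $M^\cd=(M^*)^\circ$ is a demimatroid. I do not expect any genuine obstacle here; the only delicate point is bookkeeping in the formal computation, namely ensuring that every use of the commutativity $*\circ=\circ*$ is licit, and the defining-formula substitutions are available as a cross-check should the abstract argument seem opaque.
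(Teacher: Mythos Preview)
Your proposal is correct. For parts (i)--(iv) the paper gives no explicit argument, so your Klein-four reasoning (together with the direct substitution you offer as a cross-check) is exactly the sort of verification intended; in particular your observation that (ii) is already contained in item (i) of the preceding proposition is on the mark.

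For part (v) there is a genuine, if minor, difference. The paper does not invoke the factorization $M^\cd=(M^*)^\circ$; instead it verifies $(R_0)$ and $(R_1)$ for $\rho^\cd$ directly, unwinding $\rho^\cd(X)\leq\rho^\cd(X\cup x)\leq\rho^\cd(X)+1$ into $\rho(E\bs X)\geq\rho((E\bs X)\bs x)\geq\rho(E\bs X)-1$, which holds by the demimatroid axiom for $\rho$. Your route is slightly more conceptual and reuses work already done (closure under $*$ from Section~\ref{defdemi} and under $\circ$ from the preceding proposition), while the paper's direct check is self-contained and mirrors the analogous verification for $\rho^\circ$. Either approach is entirely adequate.
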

\begin{proof} (v): Obviously $\rho^\cd(\emptyset)=0$. Let $X\subset E$ and $x\in
E\bs X$. $\rho^\cd(X)\leq\rho^\cd(X\cup x)\leq\rho^\cd(X)+1$ if and only if
$\rho(E)-\rho(E\bs X)\leq\rho(E)-\rho(E\bs(X\cup x))\leq\rho(E)-\rho(E\bs X)+1$
if and only if $\rho(E\bs X)\geq\rho((E\bs X)\bs x)\geq\rho(E\bs X)-1$. But each
of these last two inequalities directly follows from the properties of
$\rho$.\end{proof}

\medskip The identity (denoted by ``id"), dual, nullity and supplement
operations may be viewed as operators acting on the set of combinatroidal
structures defined on the same ground set $E$.

\begin{proposition} Let $M=(E,\rho)$ be a combinatroid. Then the operators
$\{\text{\rm id},*,\circ,\cd\}$ form an abelian group isomorphic to
$\Z_2\times\Z_2$:
\[\begin{array}{|c||c|c|c|c|}\hline
  & \text{\rm id}  & * & \circ& \cd\\\hline\hline
 \text{\rm id}& \text{\rm id} & * & \circ & \cd\\\hline
 *& * & \text{\rm id} & \cd & \circ\\\hline
 \circ& \circ & \cd & \text{\rm id} & *\\\hline
 \cd& \cd & \circ & * & \text{\rm id} \\\hline
\end{array}\,.\]\end{proposition}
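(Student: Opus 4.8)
\medskip
\noindent\emph{Proof proposal.} The plan is to observe that almost all of the computational content is already contained in the three preceding propositions, so that the task reduces to assembling those identities into the displayed table and then recognizing the abstract group. First I would fix the family $\mathcal{C}$ of all combinatroids on the ground set $E$ and regard each of $\mathrm{id},*,\circ,\cd$ as a self-map of $\mathcal{C}$. Each map is well defined because it sends a combinatroid to a combinatroid: indeed $\rho^*(\emptyset)=\rho^\circ(\emptyset)=\rho^\cd(\emptyset)=0$, as recorded in the proofs above. Moreover each of $*,\circ,\cd$ is an involution, by the relations $(M^*)^*=M$, $(M^\circ)^\circ=M$ and $(M^\cd)^\cd=M$; hence all four maps are bijections of $\mathcal{C}$ and lie in its symmetric group. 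Since the group operation is composition of maps, associativity is automatic and $\mathrm{id}$ is a two-sided identity, so these two axioms require no separate work.

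Next I would read off every off-diagonal entry of the table from the supplement proposition. Its parts state $(M^*)^\circ=(M^\circ)^*=M^\cd$, $(M^\circ)^\cd=(M^\cd)^\circ=M^*$ and $(M^*)^\cd=(M^\cd)^*=M^\circ$, which translate into $*\circ=\circ *=\cd$, $\circ\cd=\cd\circ=*$ and $*\cd=\cd *=\circ$. Together with the three squaring relations and the trivial row and column for $\mathrm{id}$, these account for all sixteen entries and show in particular that $\{\mathrm{id},*,\circ,\cd\}$ is closed under composition; thus it is a subgroup of the symmetric group of $\mathcal{C}$. Because every listed product is given in both orders, the table is symmetric and the subgroup is abelian, and because the diagonal is constantly $\mathrm{id}$, every element is its own inverse.

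Finally I would identify the isomorphism type. A group of order four all of whose non-identity elements have order two is the Klein four-group $\Z_2\times\Z_2$; explicitly, the assignment $\mathrm{id}\mapsto(0,0)$, $*\mapsto(1,0)$, $\circ\mapsto(0,1)$, $\cd\mapsto(1,1)$ carries the table to addition in $\Z_2\times\Z_2$, as one checks on the generators, for instance $*\circ=\cd$ matching $(1,0)+(0,1)=(1,1)$. The one point deserving care is to confirm that the group genuinely has order four and does not collapse; for this I would exhibit a single combinatroid on which the four operators produce four distinct rank functions. On $E=\{1,2\}$ the integer-valued rank function with $\rho(\{1\})=0$, $\rho(\{2\})=1$ and $\rho(E)=5$ already does so, which forces $\mathrm{id},*,\circ,\cd$ to be pairwise distinct. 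Beyond this bookkeeping the statement poses no real obstacle, since the substantive relations were all established in the foregoing propositions.
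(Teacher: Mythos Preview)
The paper offers no proof of this proposition, evidently regarding the displayed Cayley table as an immediate consequence of the identities collected in the three preceding propositions; your argument makes that derivation explicit and is correct. Your final paragraph, verifying that the four operators are genuinely distinct as self-maps of $\mathcal{C}$ by exhibiting a witness combinatroid, addresses a point the paper passes over in silence but which is indeed needed to conclude that the group has order exactly four rather than merely satisfying the relations of $\Z_2\times\Z_2$; note, though, that your witness lives on a two-element ground set, so strictly speaking it certifies distinctness only once $|E|\ge 2$ (and indeed for $|E|\le 1$ the operators do collapse, e.g.\ $*=\circ$ when $|E|=1$), so the statement should be read with that tacit assumption.
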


\begin{remark} Note that the operators $\{*,\circ,\cd\}$ form a \emph{triality},
in the sense that the composition of two of them gives the third
one.\end{remark}

\begin{example} Let $E$ be a finite set and $\rho:2^E\ra\Z_+$, $X\mapsto|X|$.
Then $\rho^*\equiv\rho^\circ\equiv0$ and $\rho^\cd\equiv\rho$.\end{example}

\begin{example} Let $E$ be a finite set and $\rho:2^E\ra\Z_+$, $\rho(X)=0$ if
$X\neq E$ and $\rho(E)=1$. We have that $\rho^*(\emptyset)=0$ and
$\rho^*(X)=|X|-1$ if $X\neq\emptyset$; $\rho^\circ(X)=|X|$ if $X\neq E$ and
$\rho^\circ(E)=|E|-1$; $\rho^\cd(\emptyset)=0$ and $\rho^\cd(X)=1$ if
$X\neq\emptyset$.\end{example}

\begin{example}\label{Contraex} Let $M=(E=\{1,2,3\},\rho)$ be the matroid
whose basis are $\{1,2\}$ and $\{1,3\}$. We have the following table:
\[\begin{array}{|r|c|c|c|c|c|c|c|c|c|}\hline
 X & \emptyset&1&2&3&12&13&23&E\\\hline\hline
 \rho&0&1&1&1&2&2&1&2\\\hline
 \rho^*&0&0&1&1&1&1&1&1\\\hline
 \rho^\circ&0&0&0&0&0&0&1&1\\\hline
 \rho^\cd&0&1&0&0&1&1&1&2\\\hline
\end{array}\,.\]
\end{example}

\begin{remark} If $M$ is a matroid, then $M^\circ$ and $M^\cd$ are demimatroids,
but they might not be matroids. For instance, in Example~\ref{Contraex},
$1=\rho^\circ(23)+\rho^\circ(\emptyset)\not\leq\rho^\circ(2)+\rho^\circ(3)=0$
and $1=\rho^\cd(23)+\rho^\cd(\emptyset)\not\leq\rho^\cd(2)+\rho^\cd(3)=0$, show
that $\rho^\circ$ and $\rho^\cd$ do not satisfy the submodularity
condition.\end{remark}

\begin{example} Let $G$ be a simple graph with no isolated vertices; we see
$G$ as a $1$-dimensional simplicial complex. Let $E$ denote the vertex set of
$G$ and define $\rho:2^E\ra\Z_+$, $\rho(\emptyset)=0$, $\rho(X)=1$ if $X$ is and
independent vertex set of $G$, and $\rho(X)=2$ if $X$ is not an independent
vertex set of $G$. Then
\begin{eqnarray*}
\rho^*(X)&=&\begin{cases}
|X|,& \text{if $X$ is not a covering};\\
|X|-1,& \text{if $X$ is a covering};\\
|X|-2,& \text{if $X=E$}.\end{cases}\\
\rho^\circ(X)&=&\begin{cases}
0,& \text{if $X=\emptyset$};\\
|X|-1,& \text{if $X$ is idependent};\\
|X|-2,& \text{if $X$ is not independent}.\end{cases}\\
\rho^\cd(X)&=&\begin{cases}
0,& \text{if $X$ is not a covering};\\
1,& \text{if $X$ is a covering}.\end{cases}
\end{eqnarray*}
\end{example}

For $\alpha$ and $\beta$, combinatroids over $E$, we define
$(\alpha\vee\beta)(X)=\max\{\alpha(X),\beta(X)\}$ and
$(\alpha\wedge\beta)(X)=\min\{\alpha(X),\beta(X)\}$ for all $X\subseteq E$.

\begin{lemma} If $\alpha$ and $\beta$ are demimatroids, then $\alpha\vee\beta$ and
$\alpha\wedge\beta$ are demimatroids.\end{lemma}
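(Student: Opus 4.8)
The plan is to verify directly the two defining axioms $(R_0)$ and $(R_1)$ for each of $\alpha\vee\beta$ and $\alpha\wedge\beta$, since a demimatroid is nothing more than a $\Z_+$-valued function on $2^E$ obeying those two conditions. Axiom $(R_0)$ is immediate: as $\alpha(\emptyset)=\beta(\emptyset)=0$, both $\max\{0,0\}$ and $\min\{0,0\}$ equal $0$. Non-negativity and integrality are automatic, because the pointwise maximum or minimum of two $\Z_+$-valued functions is again $\Z_+$-valued. So everything reduces to checking $(R_1)$.

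For $(R_1)$, fix $X\subseteq E$ and $x\in E$. If $x\in X$ then $X\cup x=X$ and the required chain of inequalities is trivial, so I would assume $x\notin X$. Abbreviate $a=\alpha(X)$, $a'=\alpha(X\cup x)$, $b=\beta(X)$, $b'=\beta(X\cup x)$. By $(R_1)$ for $\alpha$ and $\beta$ I have $a\leq a'\leq a+1$ and $b\leq b'\leq b+1$. The whole argument then rests on one elementary observation about the functions $\max$ and $\min$ on $\Z_+\times\Z_+$: both are order-preserving in each argument, and both satisfy $\varphi(a+1,b+1)=\varphi(a,b)+1$.

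Using monotonicity with $a\leq a'$ and $b\leq b'$ gives the lower inequalities $\max\{a,b\}\leq\max\{a',b'\}$ and $\min\{a,b\}\leq\min\{a',b'\}$. For the upper inequalities, from $a'\leq a+1$ and $b'\leq b+1$ together with monotonicity I get $\max\{a',b'\}\leq\max\{a+1,b+1\}=\max\{a,b\}+1$ and likewise $\min\{a',b'\}\leq\min\{a+1,b+1\}=\min\{a,b\}+1$. Translating back to the original notation, $(\alpha\vee\beta)(X)\leq(\alpha\vee\beta)(X\cup x)\leq(\alpha\vee\beta)(X)+1$, and the same chain holds for $\wedge$, which is exactly $(R_1)$.

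There is essentially no hard step here: the statement is a routine verification, and the only point to keep track of is that the two bounds comprising $(R_1)$ — the monotonicity lower bound and the unit-increment upper bound — are preserved \emph{separately} by $\max$ and by $\min$. If a cleaner formulation is preferred, one can package the observation as the general fact that the pointwise maximum and minimum of finitely many demimatroids are demimatroids, established by the identical two-line estimate.
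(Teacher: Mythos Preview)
Your proof is correct and follows essentially the same approach as the paper: both reduce the verification of $(R_1)$ to the monotonicity of $\max$ and $\min$ in each argument. You spell out the upper-bound step (using $\max\{a+1,b+1\}=\max\{a,b\}+1$ and likewise for $\min$) more explicitly than the paper's one-line proof, but the underlying idea is identical.
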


\begin{proof} This follows immediately from the fact that for real numbers
$a_1\leq a_2$ and $b_1\leq b_2$ it holds that
$\min\{a_1,b_1\}\leq\min\{a_2,b_2\}$ and
$\max\{a_1,b_1\}\leq\max\{a_2,b_2\}$.\end{proof}

The set of combinatroids on a set $E$ may be partially ordered by defining
$\alpha\leq\beta$ if $\alpha(X)\leq\beta(X)$ for all $X\subseteq E$.

\begin{theorem}\label{bounded-lattice}
The set of demimatroides on a finite set $E$, with $\vee$ and $\wedge$ defined
as above, form a bounded distributive lattice. The maximum demimatroid is
$|\cdot|: X\mapsto |X|$ and the minimum demimatroid is $0:X\mapsto
0$.\end{theorem}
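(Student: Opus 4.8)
The plan is to exhibit $\mathcal{D}$ as a sublattice of the lattice of all $\Z_+$-valued functions on $2^E$, and then read off distributivity and boundedness from that ambient structure.

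First I would introduce the ambient lattice. Let $\mathcal{F}$ be the set of all functions $f:2^E\ra\Z_+$, ordered by $f\le g$ iff $f(X)\le g(X)$ for every $X\subseteq E$. This poset is order-isomorphic to a product of $2^{|E|}$ copies of the chain $\Z_+$, one factor per subset of $E$; consequently it is a distributive lattice in which join and meet are computed coordinatewise, that is $(f\vee g)(X)=\max\{f(X),g(X)\}$ and $(f\wedge g)(X)=\min\{f(X),g(X)\}$. These are precisely the operations $\vee$ and $\wedge$ of the statement.

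Next I would invoke the preceding lemma, which asserts that $\alpha\vee\beta$ and $\alpha\wedge\beta$ are demimatroids whenever $\alpha$ and $\beta$ are. This says exactly that $\mathcal{D}\subseteq\mathcal{F}$ is closed under the ambient join and meet, and hence is a sublattice of $\mathcal{F}$. In particular the pointwise $\max$ and $\min$ remain the join and meet inside $\mathcal{D}$, so $(\mathcal{D},\vee,\wedge)$ is a lattice; and since every sublattice of a distributive lattice is itself distributive, $\mathcal{D}$ inherits both distributive laws from $\mathcal{F}$. (Alternatively one may check the distributive identities directly, since evaluated at each $X$ they reduce to the distributivity of $\max$ and $\min$ on the chain $\Z_+$.) For boundedness I would use the observation made just after the definition of a demimatroid, that $(R_0)$ and $(R_1)$ force $0\le\rho(X)\le|X|$ for all $X\subseteq E$; hence $0\le\rho\le|\cdot|$ in $\mathcal{D}$ for every demimatroid $\rho$. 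The constant map $0:X\mapsto 0$ and the free matroid $|\cdot|:X\mapsto|X|$ are themselves demimatroids (the trivial demimatroid and the matroid of the earlier examples), so they belong to $\mathcal{D}$ and are, respectively, its minimum and maximum elements. This completes the description of $\mathcal{D}$ as a bounded distributive lattice with the claimed extremal elements.

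I do not anticipate any real obstacle: all the mathematical content is packaged in the closure statement of the previous lemma, after which two standard facts — that a product of chains is distributive, and that a sublattice of a distributive lattice is distributive — finish the argument. The only step deserving a moment of care is confirming that the join and meet formed in $\mathcal{F}$ stay inside $\mathcal{D}$, which is exactly what that lemma provides.
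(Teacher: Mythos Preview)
Your argument is correct and complete: embedding $\mathcal{D}$ as a sublattice of the pointwise-ordered function lattice $\mathcal{F}$, invoking the preceding lemma for closure, and reading off distributivity and the bounds is exactly the right approach. The paper itself states this theorem without proof, so there is nothing to compare against; your write-up would serve as the missing justification.
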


\begin{example} This lattice has only one atom, namely, $\rho:2^E\ra\Z_+$,
$\rho(X)=0$ if $X\neq E$ and $\rho(E)=1$. And it also has only one coatom, which
is the nullity of $\rho$, i.e. $\rho^\circ(X)=|X|$ for all $X\neq E$ and
$\rho^\circ(E)=|E|-1$.
\end{example}

Let $M=(E,\rho)$ be a nontrivial demimatroid, and set $k:=\rho(E)\leq|E|$.
Define $\d_k(M):=\min\{|X|: \rho(X)=k\}$ and choose $X\subseteq E$ such that
$\d_k(M)=|X|$. For $x\in X$ we know that $\rho(X\bs x)<\rho(X)\leq\rho(X\bs
x)+1$. From this it follows that $\rho(X\bs x)=k-1$. Define
$\d_{k-1}(M):=\min\{|Y|:\rho(Y)=k-1\}$ and choose $Y\subseteq E$ such that
$\d_{k-1}(M)=|Y|$. For $y\in Y$ we know that $\rho(Y\bs y)<\rho(Y)\leq\rho(Y\bs
y)+1$. From this it follows that $\rho(Y\bs y)=k-2$. Continuing this process we
obtain that $0=\d_0(M)<\d_1(M)<\cdots<\d_k(M)\leq|E|$. A subset $X$ of $E$ is
said to be of \emph{level} $r$ if $\rho(X)=r$. Thus $\rho$ induce a partition of
$2^E$ by level sets. We put this on record as the following lemma, but first a
definition. For $1\leq r\leq\rho(E)$ we define the $r$-th \emph{Wei number} of
the demimatroid as
\begin{equation}\label{Wei-number} \d_r(M):=\min\{|X|:\rho(X)=r\}.\end{equation}
\begin{lemma}\label{strat} Let $M=(E,\rho)$ be demimatroid of rank $k:=\rho(M)$. Then
\begin{itemize}\label{gSingleton}
 \item[(i)] The image of $\rho$ is the set $\{0,1,\ldots,k\}$;
 \item[(ii)] $0<\d_1(M)<\cdots<\d_k(M)\leq|E|$;
 \item[(iii)] If $\rho(X)\geq r$, then $|X|\geq \d_r(M)$;
 \item[(iv)] $\min\{|X|:\rho(X)=r\}\ =\min\{|X|:\rho(X)\geq r\}$.
 \item[(v)] {\rm(Generalized Singleton bound)} For all $0\leq r\leq k$ it holds
 that
 \[k+\d_r(M)\leq|E|+r.\]
\end{itemize}\end{lemma}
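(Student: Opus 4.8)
The plan is to establish the five parts mostly in sequence, since each builds on the preceding ones. For part (i), I would use the defining property $(R_1)$: since $\rho(\emptyset)=0$ and adding one element changes $\rho$ by at most $1$, any chain $\emptyset = X_0 \subset X_1 \subset \cdots \subset X_m = X$ obtained by adjoining one element at a time shows $\rho$ takes every value from $0$ up to $\rho(X)$ along the way. Applying this to a maximal chain from $\emptyset$ to $E$ shows that every integer in $\{0,1,\ldots,k\}$ is achieved, so the image is exactly that set; this is precisely what legitimizes the definition of $\d_r(M)$ in \eqref{Wei-number} for each $1\le r\le k$.

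For part (ii), the strict monotonicity $\d_r(M) < \d_{r+1}(M)$ is essentially the computation already carried out in the paragraph preceding the lemma. I would formalize it as follows: if $X$ achieves $\rho(X)=r+1$ with $|X|=\d_{r+1}(M)$, then picking any $x\in X$ and using $\rho(X\bs x) \ge \rho(X)-1 = r$ together with part (i) lets me descend to a set of rank exactly $r$ and strictly smaller cardinality, forcing $\d_r(M) \le |X|-1 < \d_{r+1}(M)$. The bounds $0 = \d_0(M) < \d_1(M)$ and $\d_k(M) \le |E|$ come from $\rho(\emptyset)=0$ and $\rho(E)=k$ respectively.

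Parts (iii) and (iv) are the monotone reformulation of the minimum. For (iii), if $\rho(X)\ge r$, then by part (i) applied along a chain inside $X$ there is a subset $X'\subseteq X$ with $\rho(X')=r$, whence $|X|\ge |X'|\ge \d_r(M)$; the key is again that $(R_1)$ guarantees I can hit the value $r$ exactly while shrinking $X$. Part (iv) then follows immediately: the inclusion $\{X:\rho(X)=r\}\subseteq\{X:\rho(X)\ge r\}$ gives $\ge$ on the minima, while (iii) supplies the reverse inequality, so the two minima coincide.

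The only part requiring a genuinely new idea is part (v), the generalized Singleton bound, and I expect this to be the main obstacle. The plan is to invoke duality, since the dual $M^*$ is again a demimatroid (as established in the Demimatroids section) with $\rho^*(E) = |E|-k$. I would relate the Wei numbers of $M$ to a rank condition on $M^*$: fixing a set $X$ with $\rho(X)=r$ and $|X|=\d_r(M)$, I compute $\rho^*(E\bs X) = |E\bs X| + \rho(X) - \rho(E) = |E|-\d_r(M)+r-k$. Since $\rho^*$ takes nonnegative values (being a demimatroid rank by part (i) applied to $M^*$), this yields $|E|-\d_r(M)+r-k \ge 0$, which rearranges to exactly $k+\d_r(M)\le |E|+r$. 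The subtle point to verify carefully is that the displayed quantity is genuinely $\rho^*$ evaluated at $E\bs X$ and hence nonnegative, rather than some auxiliary expression; once that identification is pinned down, the bound drops out by pure arithmetic.
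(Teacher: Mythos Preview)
Your proposal is correct throughout. Parts (i)--(iv) match the paper's treatment closely: the paper proves (i) and (ii) in the paragraph preceding the lemma exactly via the descending-chain argument you describe, and its proof of (iii) is the one-liner ``say $\rho(X)=r+s$; then $|X|\ge \d_{r+s}(M)\ge \d_r(M)$'', which is your argument phrased through (ii) rather than through an explicit chain inside $X$.

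The genuine divergence is in part (v). The paper does \emph{not} use duality; it runs a downward induction on $r$, anchored at $r=k$ (where the bound reads $\d_k(M)\le |E|$, which is trivial), and for the inductive step invokes the strict monotonicity from (ii): $k+\d_{r-1}(M)\le k+\d_r(M)-1\le |E|+r-1$. Your route instead fixes a minimizer $X$ with $\rho(X)=r$ and $|X|=\d_r(M)$, computes $\rho^*(E\bs X)=|E|-\d_r(M)+r-k$, and reads off the bound from $\rho^*\ge 0$. Both are short and valid. Your argument is more conceptual and yields the inequality for each $r$ in one stroke, but it relies on the earlier fact that $M^*$ is a demimatroid (or at least that $\rho^*\ge 0$, which amounts to $\rho(E)\le \rho(E\bs X)+|X|$ from iterated $(R_1)$). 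The paper's induction is entirely self-contained within the lemma, needing only (ii); in exchange it is slightly less illuminating about \emph{why} the Singleton bound holds.
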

\proof (iii): Say $\rho(X)=r+s$. Then $|X|\geq \d_{r+s}(M)\geq \d_r(M)$.

 (v): $k+\d_k(M)\leq|E|+k$ iff $\d_k(M)\leq|E|$, which is true. Suppose the result
 is true for $r,\ldots,k$. Hence $k+\d_{r-1}(M)\leq k+\d_r(M)-1\leq|E|+r-1$.\qed

\begin{example}\label{Contra} Let $M=(E=\{1,2,3,4\},\rho)$ be the matroid whose basis are
$\{1,2\}$, $\{1,3\}$, $\{1,4\}$, $\{2,3\}$, $\{3,4\}$. We have the following
table:
\[\begin{array}{|r|c|c|c|c|c|c|c|c|c|c|c|c|c|c|c|c|}\hline
 X&\emptyset&1&2&3&4&12&13&14&23&24&34&123&124&134&234&E\\\hline\hline
 \rho&0&1&1&1&1&2&2&2&2&1&2&2&2&2&2&2\\\hline
 \rho^*&0&1&1&1&1&2&1&2&2&2&2&2&2&2&2&2\\\hline
 \rho^\circ&0&0&0&0&0&0&0&0&0&1&0&1&1&1&1&2\\\hline
 \rho^\cd&0&0&0&0&0&0&1&0&0&0&0&1&1&1&1&2\\\hline
 \end{array},
 \begin{array}{|c|c|c|}\hline
 &\d_1&\d_2\\\hline\hline
 \rho&2&4\\\hline
 \rho^*&2&4\\\hline
 \rho^\circ&1&2\\\hline
 \rho^\cd&1&2\\\hline
 \end{array}\,.\]
\end{example}

\bigskip Since $\rho^\cd(E\bs X)=\rho^\cd(E)-\rho(X)$ for all $X\subseteq E$, Eq.
(\ref{Wei-number}) can be rewritten as
\[\d_r(M)=\min\{|X|:\rho^\cd(E\bs X)=\rho^\cd(E)-r\}.\]
Thus we may interpret the $r$-th Wei number $\d_r(M)$ as the minimum number of
elements that must be removed from $E$ to decrease the rank of $M^\cd$ by $r$. A
fundamental result is the following.
\begin{theorem}{\rm(Wei's duality~\cite[Thm. 13]{BJMS-z})}\label{wei-duality}
Let $M=(E,\rho)$ be a demimatroid. Then, with $n=|E|$ and $k=\rho(M)$,
\[\{\d_1(M),\ldots,\d_k(M)\}=\{1,\ldots,n\}\setminus\{n+1-\d_1(M^*),\ldots,n+1-
\d_{n-k}(M^*)\}.\]
\end{theorem}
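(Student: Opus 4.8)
The plan is to reduce Wei's duality to a purely combinatorial statement about a single integer sequence attached to $M$, its maximum-rank profile. For $0\leq m\leq n$ set
\[f(m):=\max\{\rho(X):|X|=m\}.\]
First I would record the two elementary properties of $f$ that follow from $(R_1)$ alone: $f$ is non-decreasing (extend a maximizer by one element and use $\rho(X)\leq\rho(X\cup x)$), and $f(m)\leq f(m-1)+1$ (delete one element from a maximizer and use $\rho(X\cup x)\leq\rho(X)+1$). Hence $f$ climbs from $f(0)=0$ to $f(n)=k$ in unit steps. Using Lemma~\ref{strat}(iv) I would then identify $\d_r(M)=\min\{m:f(m)\geq r\}$, so that the $r$-th Wei number is exactly the position of the $r$-th jump of $f$; therefore
\[\{\d_1(M),\ldots,\d_k(M)\}=\{m\in\{1,\ldots,n\}:f(m)=f(m-1)+1\}=:A,\]
the set of jump positions of $f$, with $|A|=k$.

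The second step is to express the dual profile in terms of $f$. Writing $f^*(m):=\max\{\rho^*(X):|X|=m\}$ and using $\rho^*(X)=|X|+\rho(E\bs X)-k$, maximizing over $|X|=m$ (equivalently over complements of size $n-m$) yields the key identity
\[f^*(m)=m+f(n-m)-k.\]
A one-line difference computation then gives
\[f^*(m)-f^*(m-1)=1-\bigl(f(n-m+1)-f(n-m)\bigr),\]
so that $f^*$ also rises in unit steps, from $f^*(0)=0$ to $f^*(n)=n-k$, and, crucially, $f^*$ jumps at $m$ if and only if $f$ does \emph{not} jump at $n-m+1$. Applying the first step to the demimatroid $M^*$ (legitimate since $M^*$ is again a demimatroid), the Wei numbers of $M^*$ are precisely the jump positions of $f^*$, i.e.
\[\{\d_1(M^*),\ldots,\d_{n-k}(M^*)\}=\{m\in\{1,\ldots,n\}:f\text{ does not jump at }n-m+1\}.\]

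Finally I would translate this through the reflection $m\mapsto n+1-m$. Setting $p=n+1-m$ turns the condition ``$f$ does not jump at $n-m+1$'' into ``$f$ does not jump at $p$'', whence
\[\{n+1-\d_1(M^*),\ldots,n+1-\d_{n-k}(M^*)\}=\{p\in\{1,\ldots,n\}:f(p)=f(p-1)\},\]
which is exactly the complement of $A$ in $\{1,\ldots,n\}$. Since every $p\in\{1,\ldots,n\}$ either is or is not a jump position of $f$, the set $A$ and this complement partition $\{1,\ldots,n\}$, which is the asserted identity. I expect the only delicate point to be the bookkeeping in the reflection step, where the index substitutions and the passage $m\mapsto n+1-m$ must be tracked carefully so that the ranges $1\leq r\leq k$ and $1\leq s\leq n-k$ align correctly; everything else rests solely on $(R_0)$ and $(R_1)$, which is precisely what makes the argument pass verbatim from matroids to demimatroids, with no appeal to submodularity.
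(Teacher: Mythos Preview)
Your argument is correct, but it is genuinely different from the paper's. The paper does not introduce a profile function at all: it argues directly that the two sets on the right- and left-hand sides are disjoint. Assuming $\d_i(M)=n+1-\d_j(M^*)$, it picks witnesses $X$ for $\d_i(M)$ and $Y$ for $\d_j(M^*)$, computes $\rho^*(E\setminus X)+\rho(E\setminus Y)=i+j-1$ exactly from the definition of $\rho^*$, and uses Lemma~\ref{strat}(iii) to bound the same sum by $i+j-2$, a contradiction. Equality of the two sides then follows from cardinality: by Lemma~\ref{strat}(ii) the $\d_r(M)$ are $k$ distinct elements of $\{1,\ldots,n\}$ and the $n+1-\d_s(M^*)$ are $n-k$ distinct elements, so disjointness forces a partition.

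Your route via the maximum-rank profile $f(m)=\max\{\rho(X):|X|=m\}$ trades that short contradiction for a structural picture: you identify $\{\d_r(M)\}$ with the jump set of $f$, derive the exact relation $f^*(m)=m+f(n-m)-k$ from the definition of $\rho^*$, and read off that jump positions of $f^*$ are precisely the reflected non-jump positions of $f$. This yields the partition constructively, with the reflection $m\mapsto n+1-m$ making the complementarity explicit rather than inferred from a count. The paper's argument is quicker; yours explains \emph{why} the Wei numbers of $M$ and the reflected Wei numbers of $M^*$ interlock, and your observation that only $(R_0)$ and $(R_1)$ are used is equally true of the paper's proof. Your caution about the bookkeeping in the reflection step is well placed but the computation you give is already correct as stated.
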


\proof Suppose $\d_i(M)=n+1-\d_j(M^*)$ for some $i,j$. Choose $X\subseteq E$
such that $\rho(X)=i$ and $\d_i(M)=|X|$. Hence $|E\bs X|=n-|X|=\d_j(M^*)-1$. By
Lemma~\ref{strat}(iii) we have that $\rho^*(E\bs X)\leq j-1$. Similarly, choose
$Y\subseteq E$ such that $\rho^*(Y)=j$ and $\d_j(M^*)=|Y|$. Hence $\rho(E\bs
Y)\leq i-1$. But this implies that $i+j-1=\rho^*(E\bs X)+\rho(E\bs Y)\leq
i+j-2$, which is not possible.\qed

\begin{remark} In the literature, $\d_r(M^\circ)$ is known as the $r$-th generalized Hamming
weight of $M$, and since $(M^\circ)^\cd=M^*$, then $\d_r(M^\circ)$ is the
minimum number of elements that must be removed from $E$ to decrease the rank of
$M^*$ by $r$.\end{remark}

\begin{remark} $\min\{|X|:\eta(X)=r\}+\max\{|Y|:\rho^*(E)-\rho^*(Y)=r\}=|E|$.
\end{remark}
\noindent{\it Proof of the Remark}. Set $a=\min\{|X|:\eta(X)=r\}$ and
$b=\max\{|Y|:\rho^*(Y)=\rho^*(E)-r\}$. Choose $X$ such that $a=|X|$. Since
$\rho^*(E\bs X)=\rho(E)-r$, it holds that $|E\bs X|\leq b$, so $|E|\leq a+b$. To
prove the other direction choose $Y$ such that $b=|Y|$. Since $\eta(E\bs Y)=r$,
it holds that $a\leq|E\bs Y|$, so $a+b\leq|E|$.\qed

\medskip Let $M=(E,\rho)$ be a demimatroid. From the Singleton bound we obtain
that $\d_1(M)\leq|E|-\rho(E)+1$. When equality is attained, $M$ is called a
\emph{full} demimatroid.

\begin{corollary} Let $M=(E,\rho)$ be a demimatroid, with $n=|E|$ and $k=\rho(E)$.
\begin{itemize}
 \item[(i)] If $k+\d_r(M)=n+r$, then $k+\d_s(M)=n+s$ for all $s\geq r$.
 \item[(ii)] If $M$ is full, then $M^*$ is full.
\end{itemize}\end{corollary}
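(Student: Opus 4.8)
The plan is to treat the two parts separately: part (i) is a short squeezing argument, and part (ii) is then a bookkeeping deduction from Wei's duality using the staircase produced by part (i).

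For part (i), I would simply combine the two inequalities already available for the Wei numbers. On one hand, the generalized Singleton bound (Lemma~\ref{strat}(v)) gives $\d_s(M)\leq n-k+s$ for every $s$. On the other hand, the strict monotonicity $\d_1(M)<\cdots<\d_k(M)$ of Lemma~\ref{strat}(ii), together with the fact that the $\d_s(M)$ are integers, forces $\d_s(M)\geq\d_r(M)+(s-r)$ whenever $r\leq s\leq k$, since each consecutive gap is at least $1$. Granting the hypothesis $k+\d_r(M)=n+r$, i.e. $\d_r(M)=n-k+r$, the lower bound becomes $\d_s(M)\geq n-k+s$, which exactly matches the Singleton upper bound. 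The two bounds then pinch $\d_s(M)$ to $n-k+s$ for all $s\geq r$, that is $k+\d_s(M)=n+s$, as desired.

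For part (ii), I would first observe that $M$ being full is precisely the equality $k+\d_1(M)=n+1$, the case $r=1$ of part (i). Hence by (i) equality holds for every $s$, so the weight hierarchy of $M$ is the maximal staircase $\{\d_1(M),\ldots,\d_k(M)\}=\{n-k+1,n-k+2,\ldots,n\}$. I would then feed this into Wei's duality (Theorem~\ref{wei-duality}): the complement of $\{n-k+1,\ldots,n\}$ inside $\{1,\ldots,n\}$ is $\{1,\ldots,n-k\}$, so the duality identity forces $\{n+1-\d_1(M^*),\ldots,n+1-\d_{n-k}(M^*)\}=\{1,\ldots,n-k\}$. Because $M^*$ is again a demimatroid, its Wei numbers are strictly increasing in $j$ (Lemma~\ref{strat}(ii)), so the quantities $n+1-\d_j(M^*)$ are strictly decreasing; the largest of them, $n+1-\d_1(M^*)$, must therefore equal the largest element $n-k$ of the complement. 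This yields $\d_1(M^*)=k+1$. Since $\rho^*(E)=n-k$, the fullness condition for $M^*$ reads $\d_1(M^*)=n-(n-k)+1=k+1$, which is exactly what we have obtained, so $M^*$ is full.

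The only genuine subtlety sits in part (ii): one must use both the strict monotonicity of the dual Wei numbers and the precise indexing in Wei's duality to read off $\d_1(M^*)$ from the extreme element of the complemented set. Everything else reduces to the elementary squeeze of part (i) and the arithmetic $\rho(E)+\rho^*(E)=|E|$, so I expect this matching of the top element of the complement to be the main point to get right.
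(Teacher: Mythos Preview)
Your proposal is correct and follows essentially the same route as the paper's own proof: part (i) is the same squeeze between strict monotonicity and the Singleton bound (the paper phrases it as an induction, you as a direct inequality, but the content is identical), and part (ii) likewise reads off the full staircase $\d_r(M)=n-k+r$ from (i), plugs it into Wei's duality, and uses monotonicity of the dual Wei numbers to extract $\d_1(M^*)=k+1$. Your write-up is simply more explicit about the matching step that the paper compresses into the line ``by Wei's duality, $\d_s(M^*)=k+s$.''
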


\proof (i): The result is true for $s=r$. If it is true for $r,\ldots,s$, then
$k+\d_{s+1}(M)\geq k+\d_s(M)+1=n+s+1$.

(ii): By (i), $n+1-\d_r(M)=k-r+1$. Thus, by Wei's duality, $\d_s(M^*)=k+s$ for
$1\leq s\leq n-k$. In particular, $\d_1(M^*)=k+1=n-(n-k)+1=n-\rho^*(E)+1$.\qed

\begin{example}\label{Contra-full} Let $M=(E=\{1,2,3\},\rho)$ be the demimatroid, with $\rho$
given by:
\[\begin{array}{|r|c|c|c|c|c|c|c|c|}\hline
 X&\ \emptyset&\ 1&\ 2&\ 3&12&13&23&E\\\hline\hline
 \rho&0&0&0&0&1&1&1&2\\\hline
 \rho^*&0&0&0&0&0&0&0&1\\\hline
 \rho^\circ&0&1&1&1&1&1&1&1\\\hline
 \rho^\cd&0&1&1&1&2&2&2&2\\\hline
 \end{array}\,,\qquad
 \begin{array}{|c|c|c|}\hline
 &\d_1&\d_2\\\hline\hline
 \rho&2&3\\\hline
 \rho^*&3&\\\hline
 \rho^\circ&1&\\\hline
 \rho^\cd&1&2\\\hline
 \end{array}\,.\]
We observe that $\rho$ and $\rho^*$ are full, whereas $\rho^\circ$ and
$\rho^\cd$ are not.
\end{example}

\begin{lemma} Let $M=(E,\rho)$ be a demimatroid, with $n=|E|$ and $k=\rho(E)$.
Then $M$ is full if and only if \[\rho(X)=\begin{cases}
 0,&\text{if }\ |X|\leq n-k;\\
 r,&\text{if }\ |X|=n-k+r\text{ and } r\geq 1.
 \end{cases}\]\end{lemma}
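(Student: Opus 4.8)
The plan is to prove both implications by pinning down $\rho(X)$ between two bounds that depend only on $|X|$; writing $m=|X|$ and setting $\ell(m):=\max\{0,\,m-(n-k)\}$, the asserted formula is precisely the single equation $\rho(X)=\ell(|X|)$ for every $X\subseteq E$. First I would record the convenient reformulation of fullness: since $M$ being full means $\d_1(M)=n-k+1$, i.e.\ $k+\d_1(M)=n+1$, part (i) of the Corollary above propagates this to $k+\d_r(M)=n+r$, hence $\d_r(M)=n-k+r$, for every $1\le r\le k$. So fullness is equivalent to the entire weight hierarchy being $\d_r(M)=n-k+r$.

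The lower bound $\rho(X)\ge\ell(m)$ holds for every demimatroid and uses only $(R_1)$. I would build a descending chain $E=X_0\supset X_1\supset\cdots\supset X_{n-m}=X$ in which each $X_{i+1}$ is obtained from $X_i$ by deleting a single element; by $(R_1)$ each deletion lowers the rank by at most one, so $\rho(X)\ge\rho(E)-(n-m)=k-(n-m)=m-(n-k)$. Together with $\rho(X)\ge 0$ this yields $\rho(X)\ge\ell(m)$. Note that fullness plays no role here.

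For the forward direction I would then obtain the matching upper bound $\rho(X)\le\ell(m)$ from fullness. If $\rho(X)=r\ge 1$, then by definition of the Wei number $\d_r(M)\le|X|$, and fullness in the reformulated shape gives $n-k+r=\d_r(M)\le m$, whence $r\le m-(n-k)$; since $\rho(X)=0\le\ell(m)$ trivially as well, in all cases $\rho(X)\le\ell(m)$. Combining the two bounds gives $\rho(X)=\ell(m)$, which is exactly the stated case split (the range $|X|\le n-k$ gives $\ell(m)=0$, and $|X|=n-k+r$ with $r\ge1$ gives $\ell(m)=r$). For the converse, if $\rho$ is given by that formula then a subset has rank $1$ exactly when its cardinality equals $n-k+1$, so $\d_1(M)=n-k+1$ and $M$ is full.

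The only genuine obstacle is conceptual rather than computational: recognizing that the lower bound is forced by the Lipschitz-type axiom $(R_1)$ alone, while fullness enters solely through the upper bound via the hierarchy values $\d_r(M)=n-k+r$. Once both inequalities are in hand, the $\max$ and the case split align immediately and no delicate estimates remain; I would only take minor care with the degenerate boundary (e.g.\ $k\ge1$, so that rank-one sets exist and $\d_1(M)$ is defined).
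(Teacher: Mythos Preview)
Your proof is correct and follows essentially the same approach as the paper: both first deduce $\d_r(M)=n-k+r$ for all $r$ from fullness (you cite the preceding Corollary, the paper cites the Singleton bound lemma, which amounts to the same thing), and then squeeze $\rho(X)$ between an upper bound coming from the $\d_r$'s and a lower bound coming from $(R_1)$. Your organization into two separate inequalities $\rho(X)\ge\ell(|X|)$ and $\rho(X)\le\ell(|X|)$ is in fact cleaner than the paper's downward induction on $|X|$, which intertwines the two bounds step by step.
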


\begin{proof} $(\Leftarrow)$ Evidently $\d_1(M)=\min\{|X|:\rho(X)=1\}=n-k+1$.

$(\Rightarrow)$ By Lemma~\ref{gSingleton}(v), $\d_s(M)=n-k+s$ for all $1\leq
s\leq k$. In particular, $\d_k(M)=n$ implies $\rho(E\bs x)\leq k-1$ for $x\in
E$. Let $X\subseteq E$ with $|X|=n-1$. Suppose that $\rho(X)\leq k-2$. Then
$\rho(X)\leq \rho(E)\leq\rho(X)+1\leq k-1$, which is not possible. Thus
$\rho(X)=k-1$. Suppose that if $|X|=n-k+r$, then $\rho(X)=s$. Let $X$ such that
$|X|=n-k+r-1$. If $\rho(X)\leq n-k+r-2$, then $\rho(X)\leq\rho(X\cup
x)\leq\rho(X)+1$, i.e. $r\leq r-1$, which is not possible.
\end{proof}

\begin{lemma}\label{demi-uniform}
Let $M=(E,\rho)$ be a full demimatroid, with $n=|E|$ and $k=\rho(E)$. Then
\begin{itemize}
 \item[(i)] $\rho^*(X)=\begin{cases}
 0,&\text{if }\ |X|\leq k;\\
 r,&\text{if }\ |X|=k+r\text{ and } r\geq 1.
 \end{cases}$
 \item[(ii)] $\rho^\circ(X)=\begin{cases}
 |X|,&\text{if }\ |X|\leq n-k;\\
 n-k,&\text{if }\ |X|>n-k.\end{cases}$
 \item[(iii)] $\rho^\cd(X)=\begin{cases}
 |X|,&\text{if }\ |X|\leq k;\\
 k,&\text{if }\ |X|>k.
 \end{cases}$
 \end{itemize}\end{lemma}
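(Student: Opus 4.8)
The plan is to exploit the characterization of full demimatroids from the preceding lemma, which tells us that for such an $M$ the rank $\rho(X)$ depends only on the cardinality $|X|$; concretely, $\rho(X)=\max\{0,|X|-(n-k)\}$. Once this is in hand, each of the three assertions reduces to substituting the relevant cardinality into the defining formula for $\rho^*$, $\rho^\circ$ or $\rho^\cd$ and simplifying. Throughout I would set $m:=|X|$ and record that $|E\bs X|=n-m$, so that $\rho(E\bs X)=\max\{0,(n-m)-(n-k)\}=\max\{0,k-m\}$.

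For part (i) the cleanest route avoids direct computation: since $M$ is full, the corollary above guarantees that its dual $M^*$ is full as well, and from $\rho(E)+\rho^*(E)=|E|$ we get $\rho^*(E)=n-k$. Applying the preceding lemma to the full demimatroid $M^*$ (ground set of size $n$, rank $n-k$) yields $\rho^*(X)=0$ when $|X|\leq n-(n-k)=k$ and $\rho^*(X)=r$ when $|X|=k+r$ with $r\geq 1$, which is exactly the claimed formula. As a sanity check one may instead substitute $\rho(E\bs X)=\max\{0,k-m\}$ into $\rho^*(X)=m+\rho(E\bs X)-k$: for $m\leq k$ this gives $m+(k-m)-k=0$, and for $m=k+r$ it gives $(k+r)+0-k=r$.

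For parts (ii) and (iii) I would compute directly, because $M^\circ$ and $M^\cd$ are uniform matroids rather than full demimatroids (compare Example~\ref{Contra-full}), so the shortcut used for (i) is unavailable. Part (ii) is immediate from $\rho^\circ(X)=|X|-\rho(X)$: when $m\leq n-k$ the rank vanishes and $\rho^\circ(X)=m$, while when $m>n-k$ we obtain $\rho^\circ(X)=m-(m-(n-k))=n-k$. For part (iii) I would use $\rho^\cd(X)=\rho(E)-\rho(E\bs X)=k-\max\{0,k-m\}$: when $m\leq k$ this equals $k-(k-m)=m$, and when $m>k$ it equals $k-0=k$.

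There is essentially no substantive obstacle here: once the previous lemma has reduced the rank function of a full demimatroid to a function of cardinality alone, all three formulas are forced by a single substitution each. The only points deserving minor care are the boundary cases $m=k$ and $m=n-k$, where one should confirm that the two branches of each piecewise formula agree, and the routine bookkeeping with $|E\bs X|=n-m$ when passing to complements in parts (i) and (iii).
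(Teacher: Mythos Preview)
Your proposal is correct. The paper states Lemma~\ref{demi-uniform} without proof, evidently regarding it as an immediate consequence of the preceding characterization of full demimatroids; your argument is exactly the intended one, and both the shortcut for (i) via the earlier corollary that $M^*$ is full and the direct substitutions for (ii) and (iii) are valid and clean.
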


We said that a demimatroid $M$ is \emph{uniform} when $M^\circ$ is full.

\begin{corollary} Let $M=(E,\rho)$ be a full demimatroid, with
$n=|E|$ and $k=\rho(E)$. Then $M^\circ$ and $M^\cd$ are the uniform matroids of
rank $n-k$ and $k$, respectively.
\end{corollary}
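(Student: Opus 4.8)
The plan is to read off the rank functions of $M^\circ$ and $M^\cd$ directly from Lemma~\ref{demi-uniform} and recognize each as the rank function of a uniform matroid. Recall that the uniform matroid of rank $c$ on the ground set $E$ (with $|E|=n$) is, by definition, the matroid whose rank function is $X\mapsto\min\{|X|,c\}$; so once the two piecewise formulas of the lemma are rewritten in this compact form, the identification is immediate, and the only remaining work is to confirm that these functions are genuinely matroidal. Note also that $0\leq k\leq n$, so both $n-k$ and $k$ are legitimate ranks.

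First I would combine the two cases in Lemma~\ref{demi-uniform}(ii) into the single expression
\[\rho^\circ(X)=\min\{|X|,\,n-k\}\qquad\text{for all }X\subseteq E,\]
since $\rho^\circ(X)=|X|$ exactly when $|X|\leq n-k$ and $\rho^\circ(X)=n-k$ otherwise. In the same way, Lemma~\ref{demi-uniform}(iii) gives
\[\rho^\cd(X)=\min\{|X|,\,k\}\qquad\text{for all }X\subseteq E.\]
These are precisely the rank functions of the uniform matroids of rank $n-k$ and of rank $k$ on $E$, respectively, which already identifies $M^\circ$ and $M^\cd$ as the claimed objects at the level of rank functions.

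It remains to confirm that these functions satisfy the submodularity condition $(R_2)$, so that $M^\circ$ and $M^\cd$ are matroids and not merely demimatroids. Since $M$ is a demimatroid, $M^\circ$ and $M^\cd$ are already demimatroids (being the nullity and the supplement of $M$), so $(R_0)$ and $(R_1)$ hold automatically and only $(R_2)$ is at issue. This is the one step that goes beyond a direct appeal to Lemma~\ref{demi-uniform}, and it is standard: writing $f(t)=\min\{t,c\}$, the function $f$ is concave and non-decreasing, and for finite sets one has $|X\cup Y|+|X\cap Y|=|X|+|Y|$ together with $|X\cap Y|\leq\min\{|X|,|Y|\}\leq\max\{|X|,|Y|\}\leq|X\cup Y|$; thus $(|X\cup Y|,|X\cap Y|)$ is a fixed-sum spreading of $(|X|,|Y|)$, and concavity of $f$ gives $f(|X\cup Y|)+f(|X\cap Y|)\leq f(|X|)+f(|Y|)$, which is exactly $(R_2)$ for both $\rho^\circ$ and $\rho^\cd$. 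I do not anticipate any real obstacle: the corollary is essentially a repackaging of Lemma~\ref{demi-uniform}, the only genuine content being the observation that the two cardinality-only formulas are uniform matroid rank functions and the routine submodularity check that upgrades ``demimatroid'' to ``matroid''.
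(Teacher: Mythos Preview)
Your proposal is correct and follows essentially the same approach as the paper: the corollary is stated there without proof, as an immediate consequence of Lemma~\ref{demi-uniform}(ii)--(iii), and you have simply made that implication explicit by rewriting the piecewise formulas as $\min\{|X|,c\}$ and identifying them with uniform matroid rank functions. Your added submodularity verification is more detail than the paper supplies (it tacitly assumes the reader recognizes uniform matroids as matroids), but it is a reasonable inclusion and does not change the route.
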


\medskip The Wei numbers $\{\d_1(M),\ldots,\d_{\rho(M)}(M)\}$ of a demimatroid
$M=(E,\rho)$ determine a subset of $\{1,\ldots,|E|\}$. The reciprocal is also
true.
\begin{proposition} Let $E$ be a finite set and
$\{\d_1<\ldots<\d_k\}\subseteq\{1,\ldots,|E|\}$. Then there exists
$\rho:2^E\ra\{0,1,\ldots,k\}$ such that $M=(E,\rho)$ is a demimatroid,
$k=\rho(E)$ and $\d_r(M)=\d_r$ for all $1\leq r\leq \rho(E)$.\end{proposition}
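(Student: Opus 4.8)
The plan is to build a rank function depending only on the cardinality of its argument, since the Wei numbers are themselves cardinality minima and so are most easily controlled this way. First I would set $n:=|E|$ and define the nondecreasing step function $f:\{0,1,\ldots,n\}\ra\{0,1,\ldots,k\}$ by
\[f(j):=|\{r\in\{1,\ldots,k\}:\d_r\leq j\}|,\]
counting how many of the prescribed numbers $\d_1<\cdots<\d_k$ do not exceed $j$, and then put $\rho(X):=f(|X|)$ for every $X\subseteq E$. Note that $f(j)\leq k$ always, so $\rho$ has the required codomain.

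Next I would verify that $M=(E,\rho)$ is a demimatroid of rank $k$. Condition $(R_0)$ holds because every $\d_r\geq 1$, so $f(0)=0$. For $(R_1)$, if $x\notin X$ then $\rho(X\cup x)-\rho(X)=f(|X|+1)-f(|X|)$ is precisely the number of indices $r$ with $\d_r=|X|+1$; since the $\d_r$ are pairwise distinct this number is $0$ or $1$, giving $\rho(X)\leq\rho(X\cup x)\leq\rho(X)+1$ (the case $x\in X$ being trivial). Moreover $\rho(E)=f(n)=k$, because all the $\d_r$ lie in $\{1,\ldots,n\}$ and hence are all counted.

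It then remains to check that $\d_r(M)=\d_r$ for $1\leq r\leq k$. By Lemma~\ref{strat}(iv) I may compute $\d_r(M)=\min\{|X|:\rho(X)\geq r\}$ instead. As $f$ is nondecreasing, $\rho(X)=f(|X|)\geq r$ is equivalent to $|X|\geq\min\{j:f(j)\geq r\}$; and $f(j)\geq r$ says that at least $r$ of the $\d$'s are $\leq j$, which---because they are arranged in increasing order---holds exactly when $\d_r\leq j$. Thus $\min\{j:f(j)\geq r\}=\d_r$, and any $X$ with $|X|=\d_r\leq n$ attains the minimum, so $\d_r(M)=\d_r$.

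I expect no real obstacle: the construction is explicit and each verification is short. The only place demanding attention is the unit-increment half of $(R_1)$, which is exactly where the strict inequalities $\d_1<\cdots<\d_k$ (equivalently, distinctness of the $\d_r$) are used. I would finally remark that this $\rho$ is generally not a matroid, since it typically violates submodularity---consistent with the statement asking only for a demimatroid.
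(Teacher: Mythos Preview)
Your proposal is correct and is essentially the same construction as the paper's: your function $f(j)=|\{r:\d_r\leq j\}|$ equals $i$ precisely when $\d_i\leq j<\d_{i+1}$ (with $\d_0:=0$), which is exactly the rank function the paper writes down. Your verification is more detailed than the paper's one-line check, but the underlying idea is identical.
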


\proof Put $\d_0:=0$, $\d_{k+1}:=|E|$ and define $\rho(X)=i$ if
$\d_i\leq|X|<\d_{i+1}$. Then $\d_r(M)=\min\{|X|:\rho(X)=r\}=\d_r$.\qed

\medskip Let $M=(E,\rho)$ be a nontrivial demimatroid, and set
$k:=\rho(E)\leq|E|$. Define $\d^0(M):=\max\{|X|: \rho(X)=0\}$ and choose
$X\subseteq E$ such that $\d^0(M)=|X|$. For $x\not\in X$ we know that $\rho(X)
<\rho(X\cup x)\leq\rho(X)+1$. From this it follows that $\rho(X\cup x)=1$.
Define $\d^1(M):=\max\{|Y|:\rho(Y)=1\}$ and choose $Y\subseteq E$ such that
$\d^1(M)=|Y|$. For $y\not\in Y$ we know that $\rho(Y)<\rho(Y\cup
y)\leq\rho(Y)+1$. From this it follows that $\rho(Y\cup y)=2$. Continuing this
process we obtain that $0\leq \d^{0}(M)<\d^1(M)<\cdots<\d^k(M)=|E|$. We again
put this on record as the following lemma, but first a definition. For $1\leq
r\leq\rho(E)$ we define the $r$-th \emph{upper Wei number} of the demimatroid as
\begin{equation}\label{upper-Wei-number} \d^r(M):=\max\{|X|:\rho(X)=r\}.\end{equation}
\begin{lemma}\label{upper-strat} Let $M=(E,\rho)$ be demimatroid of rank $k:=\rho(M)$. Then
\begin{itemize}
 \item[(i)] The image of $\rho$ is the set $\{0,1,\ldots,k\}$;
 \item[(ii)] $0\leq \d^0(M)<\cdots<\d^k(M)=|E|$;
 \item[(iii)] If $\rho(X)\leq r$, then $|X|\leq \d^r(M)$;
 \item[(iv)] $\max\{|X|:\rho(X)=r\}\ =\max\{|X|:\rho(X)\leq r\}$.
 \item[(v)] {\rm(Generalized upper Singleton bound)} For all $0\leq r\leq k$ it holds
 that
 \[k+\d^r(M)\leq|E|+r.\]
\end{itemize}\end{lemma}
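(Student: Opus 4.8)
The plan is to exploit the fact that this lemma is the order-reversed companion of Lemma~\ref{strat}: where that result stratifies $E$ from below by \emph{minimal} representatives of each rank level, here we stratify from above by \emph{maximal} ones. Accordingly, every item can be obtained by the same bookkeeping, reading all inequalities in the opposite direction, and much of the work has in fact already been done in the paragraph immediately preceding the statement.

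First I would dispose of (i), which is literally identical to Lemma~\ref{strat}(i). Fix a maximal chain $\emptyset=X_0\subset X_1\subset\cdots\subset X_n=E$ with $|X_{i+1}\setminus X_i|=1$. By $(R_1)$ the sequence $\rho(X_0),\ldots,\rho(X_n)$ starts at $0$, ends at $k$, and changes by $0$ or $1$ at each step, so it realizes every integer in $\{0,1,\ldots,k\}$; monotonicity of $\rho$ gives $\rho(X)\le\rho(E)=k$ for every $X$, so the image is exactly $\{0,1,\ldots,k\}$. For (ii), the strict chain $\d^0(M)<\cdots<\d^k(M)$ is precisely what the preceding paragraph establishes, so I would invoke it. Its engine is a maximality argument: if $X$ attains $\d^r(M)$ and $x\notin X$ (such $x$ exists whenever $r<k$, since then $X\ne E$), maximality of $X$ forbids $\rho(X\cup x)=r$, whence $(R_1)$ forces $\rho(X\cup x)=r+1$ and so $\d^{r+1}(M)\ge|X|+1$. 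That $\d^k(M)=|E|$ is immediate, as $E$ itself has rank $k$ and is the largest subset of $E$, while $\d^0(M)\ge 0$ holds because $\emptyset$ has rank $0$ and hence lies among the sets over which that maximum is taken.

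Item (iii) then follows by combining (i) and (ii): if $\rho(X)\le r$, set $t:=\rho(X)\in\{0,\ldots,k\}$; then $|X|\le\d^t(M)\le\d^r(M)$, the last inequality being the monotonicity of the chain from (ii). Item (iv) is a two-line sandwich: enlarging the constraint from $\rho(X)=r$ to $\rho(X)\le r$ can only increase the maximum, since $\{X:\rho(X)=r\}\subseteq\{X:\rho(X)\le r\}$, while (iii) bounds that larger maximum back down by $\d^r(M)$, forcing equality. Finally, (v) is a telescoping of (ii): since $\d^r(M)<\d^{r+1}(M)<\cdots<\d^k(M)=|E|$ is a strictly increasing chain of integers, it gains at least one at each of the $k-r$ steps, so $\d^r(M)\le|E|-(k-r)$, which rearranges to $k+\d^r(M)\le|E|+r$; equivalently one runs the downward induction used for Lemma~\ref{strat}(v), with base case $k+\d^k(M)=|E|+k$.

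I do not anticipate a genuine obstacle. The only point demanding care is the strict increase in (ii), where one must invoke the maximality of the chosen representative to rule out $\rho(X\cup x)=\rho(X)$; it is worth stressing that this argument uses only $(R_0)$ and $(R_1)$ together with maximality, and not submodularity $(R_2)$, which demimatroids need not satisfy. Once that step is in hand, the remaining items are formal consequences of (i) and (ii).
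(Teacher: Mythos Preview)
Your proposal is correct and follows essentially the same route as the paper: for (iii) you write $\rho(X)=t\le r$ and use $|X|\le\d^t(M)\le\d^r(M)$, exactly as the paper does (with $t=r-s$), and for (v) your downward induction/telescoping from $\d^k(M)=|E|$ is precisely the paper's argument. The paper leaves (i), (ii), (iv) to the preceding paragraph and the reader; your explicit treatment of them is a faithful fleshing-out rather than a different method.
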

\proof (iii): Say $\rho(X)=r-s$. Then $|X|\leq \d^{r-s}(M)\leq \d^r(M)$.

 (v) $k+\d^k(M)\leq|E|+k$ iff $\d^k(M)\leq|E|$, which is true. Suppose the result
 is true for $r,\ldots,k$. Hence $k+\d^{r-1}(M)\leq k+\d^r(M)-1\leq|E|+r-1$.\qed

 \begin{theorem}{\rm(Upper Wei's duality~\cite[Thm. 12]{BJMS-z})}\label{upper-wei-duality}
Let $M=(E,\rho)$ be a demimatroid. Then, with $n=|E|$ and $k=\rho(M)$,
\[\{\d^0(M)+1,\ldots,\d^{k-1}(M)+1\}=\{1,\ldots,n\}\setminus\{n-\d^0(M^*),\ldots,n-
\d^{n-k-1}(M^*)\}.\]
\end{theorem}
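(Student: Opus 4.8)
The plan is to mirror the proof of Wei's duality (Theorem~\ref{wei-duality}), reversing every inequality to suit the ``upper'' setting. The key structural observation is that both sides of the claimed identity are subsets of $\{1,\ldots,n\}$ with complementary cardinalities, so the equality reduces to a disjointness statement. First I would record that $\rho^*(M^*)=n-k$ (from $\rho(E)+\rho^*(E)=|E|$), so that the upper Wei numbers of $M^*$ run through $\d^0(M^*)<\cdots<\d^{n-k}(M^*)=n$. By Lemma~\ref{upper-strat}(ii) the values $\d^0(M)<\cdots<\d^{k-1}(M)$ are strictly increasing and strictly below $\d^k(M)=n$, so the left-hand set $\{\d^0(M)+1,\ldots,\d^{k-1}(M)+1\}$ consists of $k$ distinct integers in $\{1,\ldots,n\}$; applying the same lemma to $M^*$, the removed set $\{n-\d^0(M^*),\ldots,n-\d^{n-k-1}(M^*)\}$ consists of $n-k$ distinct integers in $\{1,\ldots,n\}$. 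Since $k+(n-k)=n$, it then suffices to prove that these two sets are disjoint.

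For the disjointness I would argue by contradiction, assuming $\d^i(M)+1=n-\d^j(M^*)$ for some $0\leq i\leq k-1$ and $0\leq j\leq n-k-1$. The relevant maximizing sets exist because, by Lemma~\ref{upper-strat}(i), the image of $\rho$ (resp. $\rho^*$) is all of $\{0,\ldots,k\}$ (resp. $\{0,\ldots,n-k\}$). Choose $X\subseteq E$ with $\rho(X)=i$ and $|X|=\d^i(M)$; then $|E\bs X|=n-\d^i(M)=\d^j(M^*)+1>\d^j(M^*)$, so the contrapositive of Lemma~\ref{upper-strat}(iii) applied to $M^*$ forces $\rho^*(E\bs X)\geq j+1$. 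Symmetrically, choosing $Y\subseteq E$ with $\rho^*(Y)=j$ and $|Y|=\d^j(M^*)$ gives $|E\bs Y|=n-\d^j(M^*)=\d^i(M)+1>\d^i(M)$, so the contrapositive of Lemma~\ref{upper-strat}(iii) applied to $M$ forces $\rho(E\bs Y)\geq i+1$.

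To close the argument I would compute the same sum both ways. Using $\rho^*(Z)=|Z|+\rho(E\bs Z)-\rho(E)$, one finds $\rho^*(E\bs X)=(\d^j(M^*)+1)+i-k$ and, solving $\rho^*(Y)=j$ for $\rho(E\bs Y)$, also $\rho(E\bs Y)=j-\d^j(M^*)+k$; adding these, the term $\d^j(M^*)$ cancels and the rank $k$ cancels, leaving $\rho^*(E\bs X)+\rho(E\bs Y)=i+j+1$. On the other hand the two bounds obtained above give $\rho^*(E\bs X)+\rho(E\bs Y)\geq(j+1)+(i+1)=i+j+2$, a contradiction. Hence the two sets are disjoint and, by the cardinality count, the identity holds.

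I expect the only genuinely delicate point to be the bookkeeping of index shifts: the ``$+1$'' on the left and the truncation $j\leq n-k-1$ on the right are precisely what make the cardinalities sum to $n$ and what turn the strict inequalities $|E\bs X|>\d^j(M^*)$ and $|E\bs Y|>\d^i(M)$ into the rank bounds via the contrapositive of Lemma~\ref{upper-strat}(iii). Keeping these offsets consistent---so that the direct computation lands on $i+j+1$ while the bounds force $i+j+2$---is where care is needed; beyond that, the proof is a formal mirror image of the lower-duality argument, with no new conceptual obstacle.
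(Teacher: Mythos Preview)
Your proposal is correct and follows essentially the same approach as the paper's proof: both reduce the identity to a disjointness statement and derive a contradiction from $\d^i(M)+1=n-\d^j(M^*)$ by computing $\rho^*(E\bs X)+\rho(E\bs Y)=i+j+1$ directly while the contrapositive of Lemma~\ref{upper-strat}(iii) forces the same sum to be at least $i+j+2$. Your write-up is actually more explicit than the paper's, which omits the cardinality-counting reduction and the intermediate algebra in the sum, but the argument is the same.
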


\proof Suppose $\d^i(M)+1=n-\d^j(M^*)$ for some $i,j$. Choose $X\subseteq E$
such that $\rho(X)=i$ and $\d^i(M)=|X|$. Hence $|E\bs X|=n-|X|=\d^j(M^*)+1$. By
Lemma~\ref{upper-strat}(iii) we have that $\rho^*(E\bs X)\geq j+1$. Similarly,
choose $Y\subseteq E$ such that $\rho^*(Y)=j$ and $\d^j(M^*)=|Y|$. Hence
$\rho(E\bs Y)\geq i+1$. But this implies that $i+j+1=\rho^*(E\bs X)+\rho(E\bs
Y)\geq i+j+2$, which is not possible.\qed

%------------------------------------------------
\section{Tutte polynomial}
%------------------------------------------------

\bigskip The Tutte polynomial is an important invariant for graphs and matroids.
We define the \emph{Tutte polynomial} of a combinatroid $M=(E,\rho)$ as
\begin{equation}\label{TuttePolynomial}
T_M(x,y):=\sum_{A\subseteq E}(x-1)^{\rho(E)-\rho(A)}(y-1)^{|A|-\rho(A)}.
 \end{equation}
Using the classical notation $\eta:=\rho^\circ$, this can be rewritten as
\begin{equation}\label{TuttePolynomialNulity}
T_M(x,y)=\sum_{A\subseteq E}(x-1)^{\eta^*(E\bs A)}(y-1)^{\eta(A)}.
 \end{equation}

\begin{remark} Since a combinatroid $\rho$ may take negative values, we must
remark that $T_M(x,y)$, as defined above, could be a rational function; so, a
better name would be the Tutte enumerator or the Tutte rational function.
However, since we will not use its properties as a rational function, by abuse
of language, we will continuous refering to it as the Tutte polynomial. On the
other hand, if $\rho$ is a demimatroid, then $T_M(x,y)$ is in fact a
polynomial.\end{remark}

\bigskip This Tutte polynomial is well-behaved with respect to combinatroidal
duality:
\begin{proposition} {\rm (Tutte duality)} Let $M=(E,\rho)$ be a combinatroid. Then
\[T_{M^*}(x,y)=T_M(y,x).\]\end{proposition}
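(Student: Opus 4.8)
The plan is a direct computation from the definition, exploiting the fact that the defining sum for the Tutte polynomial is indexed by all subsets of $E$, so that complementation $A\mapsto E\bs A$ is an involution of the index set. First I would write out
\[T_{M^*}(x,y)=\sum_{A\subseteq E}(x-1)^{\rho^*(E)-\rho^*(A)}(y-1)^{|A|-\rho^*(A)}\]
and simplify the two exponents using $\rho^*(X)=|X|+\rho(E\bs X)-\rho(E)$. Evaluating at $X=E$ and using $\rho(\emptyset)=0$ gives $\rho^*(E)=|E|-\rho(E)$, and a one-line computation then yields
\[\rho^*(E)-\rho^*(A)=|E\bs A|-\rho(E\bs A)\qquad\text{and}\qquad |A|-\rho^*(A)=\rho(E)-\rho(E\bs A).\]

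Next I would substitute $B=E\bs A$. Since complementation is a bijection of $2^E$ onto itself, the sum over $A$ becomes a sum over $B$, with $|E\bs A|=|B|$ and $\rho(E\bs A)=\rho(B)$, so that
\[T_{M^*}(x,y)=\sum_{B\subseteq E}(x-1)^{|B|-\rho(B)}(y-1)^{\rho(E)-\rho(B)}.\]
Finally I would compare this with the definition of $T_M$: interchanging the roles of $x$ and $y$ in \eqref{TuttePolynomial} produces exactly the expression above, whence $T_{M^*}(x,y)=T_M(y,x)$.

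There is no serious obstacle here; the statement reduces to bookkeeping. The only point requiring care is matching the two exponents so that the complement involution $A\mapsto E\bs A$ exactly interchanges the ``corank'' exponent $\rho(E)-\rho(A)$ with the ``nullity'' exponent $|A|-\rho(A)$. I would also note that the argument uses no property of $\rho$ beyond $\rho(\emptyset)=0$ (and that only to evaluate $\rho^*(E)$), so the identity holds verbatim for an arbitrary combinatroid, even when $\rho$ takes negative values and $T_M$ is only a rational function.
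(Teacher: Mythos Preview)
Your proof is correct and is essentially the same argument as the paper's: the paper invokes the nullity reformulation \eqref{TuttePolynomialNulity} together with the identity $(\rho^*)^\circ=(\rho^\circ)^*=\eta^*$, which is exactly the content of your two exponent computations $\rho^*(E)-\rho^*(A)=\eta(E\bs A)$ and $|A|-\rho^*(A)=\eta^*(A)$, followed by the same complementation $A\mapsto E\bs A$. The only difference is packaging; you spell out explicitly what the paper compresses into a one-line appeal to \eqref{TuttePolynomialNulity}.
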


\proof It follows immediately from Eq. (\ref{TuttePolynomialNulity}) by noticing
that $(\rho^*)^\circ=(\rho^\circ)^*=\eta^*$.\qed

\medskip Let $M=(E,\rho)$ be a combinatroid with Tutte polynomial $T_M(x,y)$. We
define its \emph{Hamming polynomial} by:
\begin{equation}\label{WHP-definition}
W_M(x,y,t):=(x-y)^{\eta(M)}y^{\rho(M)}\,T_M(\frac{x}{y},\frac{x+(t-1)y}{x-y}).
\end{equation}

\begin{example} Let $E$ be a finite set and $\rho:2^E\ra\Z$ given by $\rho(X)=|X|$.
Then $\eta(X)=0$ and $\eta^*(E\bs X)=|E\bs X|$ for all $X\subseteq E$. Hence
$T(x,y)=W(x,y,t)=x^n$.\end{example}

\begin{theorem} {\rm(MacWilliams identity)} Let $M=(E,\rho)$ be a combinatroid. Then
\[W_{M^*}(x,y,t)=t^{-\eta(M)}\,W_M(x+(t-1)y,x-y,t).\]\end{theorem}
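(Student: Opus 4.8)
The plan is to prove the MacWilliams identity for combinatroids by direct substitution into the definition of the Hamming polynomial, using Tutte duality as the key tool. Since $W_M(x,y,t)$ is defined purely in terms of the Tutte polynomial $T_M$ via the substitution in Eq.~(\ref{WHP-definition}), and since Tutte duality gives $T_{M^*}(u,v)=T_M(v,u)$, the identity should reduce to a careful tracking of how the substitution variables transform under duality. First I would write out $W_{M^*}(x,y,t)$ from the definition, recording that $\rho^*(M^*)=\rho^*(E)=|E|-\rho(E)=\eta(M)$ and that $\eta^*(M^*):=(\rho^*)^\circ(E)=\rho(E)=\rho(M)$. This already signals the expected symmetry: the roles of $\rho(M)$ and $\eta(M)$ swap when passing to the dual.

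**Next I would** expand both sides in terms of $T_M$. On the left, applying Tutte duality converts $T_{M^*}$ into $T_M$ with its two arguments interchanged; explicitly, with $a:=x/y$ and $b:=(x+(t-1)y)/(x-y)$ the substitution variables for $M^*$, duality turns $T_{M^*}(a,b)$ into $T_M(b,a)$. On the right-hand side I would substitute $x\mapsto x+(t-1)y$ and $y\mapsto x-y$ into the definition of $W_M$, producing $T_M$ evaluated at the transformed arguments $x'/y'$ and $(x'+(t-1)y')/(x'-y')$ where $x'=x+(t-1)y$ and $y'=x-y$. The heart of the computation is then to check that these two argument pairs for $T_M$ coincide — that is, that the first argument $b$ on the left equals $x'/y'$ on the right, and that the first argument $a$ on the left equals $(x'+(t-1)y')/(x'-y')$ on the right. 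These are two rational-function identities in $x,y,t$ that I expect to verify by clearing denominators.

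**The main obstacle** will be matching the prefactors. Beyond the arguments of $T_M$, each side carries a monomial prefactor: on the left, $(a-b)^{\eta(M^*)}b^{\rho(M^*)}=(a-b)^{\rho(M)}b^{\eta(M)}$; on the right, after substitution, $(x'-y')^{\eta(M)}(y')^{\rho(M)}$ together with the external factor $t^{-\eta(M)}$. I would compute $a-b$, $b$, $x'-y'=x+(t-1)y-(x-y)=ty$, and $y'=x-y$ explicitly as rational functions, then verify that the product of prefactors on the right, divided by $t^{\eta(M)}$, equals the prefactor on the left after the argument-swap — taking care that the exponents $\rho(M)$ and $\eta(M)$ attach to the correct factors once the arguments have been interchanged. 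The bookkeeping of which exponent rides on which monomial, combined with the factor $(x-y)^{\eta(M)}$ appearing raised to opposite-looking powers on the two sides, is where an error is most likely to hide; I would resolve it by reducing everything to a common denominator and comparing the resulting Laurent monomials coefficient by coefficient, so that the claimed identity follows as an equality of rational functions in $x,y,t$.
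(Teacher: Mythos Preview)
Your plan is correct and essentially identical to the paper's proof: substitute $x'=x+(t-1)y$, $y'=x-y$ into the definition of $W_M$, use $x'-y'=ty$ so that the prefactor $(x'-y')^{\eta(M)}(y')^{\rho(M)}$ produces the factor $t^{\eta(M)}$, verify $(x'+(t-1)y')/(x'-y')=x/y$ and $x'/y'=(x+(t-1)y)/(x-y)$, and then apply Tutte duality $T_M(v,u)=T_{M^*}(u,v)$ to recognize $W_{M^*}(x,y,t)$. (One slip in your write-up: the left-hand prefactor is $(x-y)^{\eta(M^*)}y^{\rho(M^*)}=(x-y)^{\rho(M)}y^{\eta(M)}$, not $(a-b)^{\eta(M^*)}b^{\rho(M^*)}$ --- your $a,b$ are the Tutte arguments, not the prefactor bases --- but this is cosmetic and the bookkeeping goes through exactly as in the paper.)
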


\proof
\begin{eqnarray*}
W_M(x+(t-1)y,x-y,t)&=&(t y)^{\eta(E)}(x-y)^{\rho(E)}\,T_M((x+(t-1)y)/(x-y),x/y)\\
&=&t^{\eta(E)}\left[y^{\eta(E)}(x-y)^{\rho(E)}(x-y)^{-\eta^*(E)}y^{-\rho^*(E)}\right]\\
&&\times\,(x-y)^{\eta^*(E)}y^{\rho^*(E)}\,T_M((x+(t-1)y)/(x-y),x/y)\\
&=&t^{\eta(E)}(1)(x-y)^{\eta^*(E)}y^{\rho^*(E)}\,T_M((x+(t-1)y)/(x-y),x/y)\\
&=&t^{\eta(E)}(x-y)^{\eta^*(E)}y^{\rho^*(E)}\,T_{M^*}(x/y,(x+(t-1)y)/(x-y))\\
&=&t^{\eta(E)}W_{M^*}(x,y,t).\quad\qed
\end{eqnarray*}

We define the \emph{Whitney generating function}
\[ f(M;x,y):=\sum_{A\subseteq E}x^{\eta^*(E\bs A)}y^{\eta(A)}.\]
\begin{theorem} {\rm(Brylawski, Gordon; see~\cite{Gordon-z})} Let $M=(E,\rho)$ be a
combinatroid. Then
\begin{itemize}
 \item[(1)] Duality\,:
 \[f(M^*;x,y)=f(M;y,x).\]
 \item[(2)] Deletion-Contraction\,: For any $p\in E$,
 \[f(M;x,y)=x^{\eta^*(p)}f(M\bs p;x,y)+y^{1-\rho(p)}f(M/p;x,y).\]
\end{itemize}\end{theorem}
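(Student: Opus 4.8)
The plan is to reduce everything to a single corank--nullity expression for $f$. Using the identity $\rho^{*\circ}(X)=\rho(E)-\rho(E\bs X)$ from the proposition on the nullity combinatroid, we have $\eta^*(E\bs A)=\rho^{*\circ}(E\bs A)=\rho(E)-\rho(A)$, so that
\[ f(M;x,y)=\sum_{A\subseteq E}x^{\rho(E)-\rho(A)}\,y^{|A|-\rho(A)}. \]
Getting the exponents into purely $\rho$-terms is the essential first move, since both parts will follow from manipulating this one expression.

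For part (1), I would substitute the dual rank function $\rho^*(A)=|A|+\rho(E\bs A)-\rho(E)$, together with $\rho^*(E)=|E|-\rho(E)$, into the corank--nullity form of $f(M^*;x,y)$. A short computation gives $\rho^*(E)-\rho^*(A)=\eta(E\bs A)$ and $|A|-\rho^*(A)=\eta^*(A)$, whence
\[ f(M^*;x,y)=\sum_{A\subseteq E}x^{\eta(E\bs A)}\,y^{\eta^*(A)}. \]
Reindexing by the complementation $A\mapsto E\bs A$, a bijection on $2^E$, turns this into $\sum_{B\subseteq E}y^{\eta^*(E\bs B)}x^{\eta(B)}$, which is exactly $f(M;y,x)$. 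Duality is thus pure bookkeeping once the dual exponents are simplified.

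For part (2), I would fix $p$, write $E'=E\bs p$, and split the sum defining $f(M;x,y)$ according to whether $p\notin A$ or $p\in A$. For the first block ($A\subseteq E'$) the summand is $x^{\rho(E)-\rho(A)}y^{|A|-\rho(A)}$; factoring out $x^{\eta^*(p)}=x^{\rho(E)-\rho(E')}$ converts the ambient rank from $\rho(E)$ to $\rho(E')$ and leaves precisely $x^{\eta^*(p)}f(M\bs p;x,y)$, using that $M\bs p$ is the restriction of $\rho$ to $E'$ with ground-set rank $\rho(E')$. For the second block, writing $A=A'\cup p$ and invoking $\rho_{M/p}(X)=\rho(X\cup p)-\rho(p)$ from the Brylawski--Gordon proposition, the summand becomes $x^{\rho(E)-\rho(A'\cup p)}y^{|A'|+1-\rho(A'\cup p)}$. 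Matching this against the corank--nullity form of $f(M/p;x,y)$, whose $y$-exponent carries an extra $+\rho(p)$ coming from $\rho_{M/p}(E')=\rho(E)-\rho(p)$, forces exactly the prefactor $y^{1-\rho(p)}$ to absorb both the $+1$ (from $|A'\cup p|=|A'|+1$) and the $+\rho(p)$.

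The computations are all routine; the only point requiring care is the tracking of the ground-set rank terms. In the deletion block the ambient rank silently drops from $\rho(E)$ to $\rho(E')$, and in the contraction block the rank of $M/p$ on its own ground set is $\rho(E)-\rho(p)$ rather than $\rho(E)$. The content of the theorem is precisely that the two prefactors $x^{\eta^*(p)}$ and $y^{1-\rho(p)}$ are what compensate for these shifts, so that verifying that the exponents balance is the heart of the argument.
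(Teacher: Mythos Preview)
Your argument is correct. The paper itself does not prove this theorem; it is stated with attribution to Brylawski and Gordon (via \cite{Gordon-z}) and then used immediately to derive the deletion--contraction recurrence for the Tutte polynomial. So there is no in-paper proof to compare against.

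Your approach---rewriting $f(M;x,y)$ in the corank--nullity form $\sum_{A}x^{\rho(E)-\rho(A)}y^{|A|-\rho(A)}$, then handling duality by substituting $\rho^*$ and reindexing via complementation, and deletion--contraction by splitting on $p\in A$---is the standard one and goes through cleanly for arbitrary combinatroids. One small wording slip: in the contraction block you attribute the extra $+\rho(p)$ in the $y$-exponent of $f(M/p;x,y)$ to the ground-set rank $\rho_{M/p}(E')=\rho(E)-\rho(p)$, but it actually comes from the rank formula $\rho_{M/p}(A')=\rho(A'\cup p)-\rho(p)$ applied to the general term (the ground-set rank governs only the $x$-exponent, where the $\rho(p)$'s cancel). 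The arithmetic you perform is nonetheless right, and the prefactor $y^{1-\rho(p)}$ is exactly the discrepancy $(|A'|+1-\rho(A'\cup p))-(|A'|-\rho(A'\cup p)+\rho(p))$.
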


We now proceed to prove a deletion-contraction formula for the Tutte and Hamming
polynomials.
\begin{lemma}\label{reduction} Let $M=(E,\rho)$ be a combinatroid. Then
\begin{itemize}
 \item[(a)] \[(x-y)^{\eta(E)}y^{\rho(E)}(x/y-1)^{\rho(E)-\rho(E\bs p)}=(x-y)(x-y)^{|E\bs p|-\rho(E\bs p)}y^{\rho(E\bs p)}.\]
 \item[(b)] \[(x-y)^{\eta(E)}y^{\rho(E)}(x-y)^{\rho(p)-1}=y^{\rho(p)}(x-y)^{|E\bs p|-\rho(E)+\rho(p)}y^{\rho(E)-\rho(p)}.\]
\end{itemize}
\end{lemma}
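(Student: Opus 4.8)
The plan is to treat both equalities as purely formal identities in the indeterminates $x$ and $y$, since neither uses any property of the rank function $\rho$ beyond the definitional facts $\eta(E)=|E|-\rho(E)$ and $|E\bs p|=|E|-1$ (here $p\in E$, so deleting one element drops the cardinality by exactly one). Accordingly, the whole argument reduces to comparing the exponents of the two ``atoms'' $(x-y)$ and $y$ on each side of the proposed equalities; no combinatorial input about circuits, independence, or submodularity is needed, which is why the statement holds for arbitrary combinatroids.

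For part (a), the first move is to eliminate the fractional factor $(x/y-1)$ by writing $x/y-1=(x-y)/y$, so that $(x/y-1)^{\rho(E)-\rho(E\bs p)}=(x-y)^{\rho(E)-\rho(E\bs p)}\,y^{-(\rho(E)-\rho(E\bs p))}$. Substituting this together with $\eta(E)=|E|-\rho(E)$ into the left-hand side and collecting the powers of $(x-y)$ and of $y$ separately, the $(x-y)$-exponent becomes $(|E|-\rho(E))+(\rho(E)-\rho(E\bs p))=|E|-\rho(E\bs p)$, while the $y$-exponent becomes $\rho(E)-(\rho(E)-\rho(E\bs p))=\rho(E\bs p)$. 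On the right-hand side, using $|E\bs p|=|E|-1$, the $(x-y)$-exponent is $1+(|E|-1-\rho(E\bs p))=|E|-\rho(E\bs p)$ and the $y$-exponent is $\rho(E\bs p)$, so the two sides coincide term by term.

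For part (b) the same strategy applies, and is in fact simpler because no negative powers appear. After replacing $\eta(E)$ by $|E|-\rho(E)$ and using $|E\bs p|=|E|-1$, one checks directly that both sides carry $y$-exponent $\rho(E)$ (on the right, $\rho(p)+(\rho(E)-\rho(p))=\rho(E)$) and $(x-y)$-exponent equal to $|E|-1-\rho(E)+\rho(p)$ (on the left this is $(|E|-\rho(E))+(\rho(p)-1)$). The only step demanding genuine attention is the cancellation of the negative $y$-power produced by rewriting $(x/y-1)$ in part (a); once that is handled correctly, everything else is bookkeeping that I would record as a single chain of exponent equalities for each identity.
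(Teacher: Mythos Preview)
Your proof is correct and follows essentially the same approach as the paper: both arguments rewrite $x/y-1=(x-y)/y$ in part~(a) and then simply compare the exponents of $(x-y)$ and $y$ on each side, using only $\eta(E)=|E|-\rho(E)$ and $|E\bs p|=|E|-1$. The paper records this as a short chain of equalities rather than a verbal exponent count, but the content is identical.
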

\proof (a):
\begin{eqnarray*}
(x-y)^{\eta(E)}y^{\rho(E)}(x/y-1)^{\rho(E)-\rho(E\bs p)}
&=&(x/y-1)^{\rho(E)-\rho(E\bs p)}(x-y)^{\eta(E)}y^{\rho(E)}\\
&=&(x-y)^{\rho(E)-\rho(E\bs p)}y^{-\rho(E)+\rho(E\bs
p)}(x-y)^{\eta(E)}y^{\rho(E)}\\
&=&(x-y)^{\rho(E)-\rho(E\bs p)}y^{\rho(E\bs p)}(x-y)^{\eta(E)}\\
&=&(x-y)^{|E|-\rho(E\bs p)}y^{\rho(E\bs p)}\\
&=&(x-y)(x-y)^{|E\bs p|-\rho(E\bs p)}y^{\rho(E\bs p)}
\end{eqnarray*}
(b):
\begin{eqnarray*}
(x-y)^{\eta(E)}y^{\rho(E)}
&=&(x-y)^{1-\rho(p)}(x-y)^{|E\bs p|-\rho(E)+\rho(p)}y^{\rho(E)-\rho(p)}y^{\rho(p)}\\
&=&(x-y)^{1-\rho(p)}y^{\rho(p)}(x-y)^{|E\bs
p|-\rho(E)+\rho(p)}y^{\rho(E)-\rho(p)}. \qed
\end{eqnarray*}
From the Brylawski recurrence it follows:
\begin{proposition} Let $M=(E,\rho)$ be a combinatroid. Then
\[T_M(x,y)=(x-1)^{\eta^*(p)}\,T_{M\bs p}(x,y)+(y-1)^{1-\rho(p)}\,T_{M/p}(x,y).\]
\end{proposition}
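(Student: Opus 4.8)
The plan is to obtain the formula as an immediate consequence of part (2) of the Brylawski--Gordon theorem, via a single change of variables. The crucial observation is that the Tutte polynomial is nothing but the Whitney generating function with its arguments shifted by one. Indeed, comparing the definition $f(M;x,y)=\sum_{A\subseteq E}x^{\eta^*(E\bs A)}y^{\eta(A)}$ with the nullity form (\ref{TuttePolynomialNulity}) of the Tutte polynomial, one reads off term by term that
\[T_M(x,y)=f(M;x-1,y-1).\]
First I would record this identity, noting that it holds verbatim for the minors as well, i.e. $T_{M\bs p}(x,y)=f(M\bs p;x-1,y-1)$ and $T_{M/p}(x,y)=f(M/p;x-1,y-1)$, since each of $M\bs p$ and $M/p$ is itself a combinatroid on the ground set $E\bs p$.

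With these identities in hand, the second step is simply to substitute $x\mapsto x-1$ and $y\mapsto y-1$ throughout the Brylawski recurrence $f(M;x,y)=x^{\eta^*(p)}f(M\bs p;x,y)+y^{1-\rho(p)}f(M/p;x,y)$. The scalar coefficients $x^{\eta^*(p)}$ and $y^{1-\rho(p)}$ become $(x-1)^{\eta^*(p)}$ and $(y-1)^{1-\rho(p)}$, while the three evaluations of $f$ turn into the corresponding Tutte polynomials by the shift identity. This produces
\[T_M(x,y)=(x-1)^{\eta^*(p)}\,T_{M\bs p}(x,y)+(y-1)^{1-\rho(p)}\,T_{M/p}(x,y),\]
which is exactly the asserted deletion-contraction formula.

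I expect essentially no obstacle here once the shift identity is noticed: the whole argument is a formal substitution of variables into an identity already proved. The only points meriting a word of care are, first, that the substitution remains legitimate even when $\rho$ takes negative values and the objects are Laurent or rational expressions rather than honest polynomials, since replacing $x$ by $x-1$ and $y$ by $y-1$ is a well-defined operation on such expressions; and second, that the exponents $\eta^*(p)$ and $1-\rho(p)$ are invariants of the fixed element $p$ and the fixed combinatroid $M$, hence are untouched by the change of variables and appear unchanged on both sides. Verifying these two points is routine, and no deeper input beyond the Brylawski--Gordon recurrence is required.
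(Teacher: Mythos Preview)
Your proposal is correct and is exactly the approach the paper takes: the paper simply says ``From the Brylawski recurrence it follows,'' and you have spelled out precisely how, via the shift identity $T_M(x,y)=f(M;x-1,y-1)$ applied to all three combinatroids $M$, $M\bs p$, $M/p$.
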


\begin{example}\label{Tutte-recurrence}
Let $M$ be the demimatroid in Example~\ref{Contra-full}, and take $p=3$.
\[\begin{array}{|r|c|c|c|c|c|c|c|c|}\hline
 X&\ \emptyset&\ 1&\ 2&\ 3&12&13&23&E\\\hline\hline
 \rho&0&0&0&0&1&1&1&2\\\hline
 \rho^*&0&0&0&0&0&0&0&1\\\hline
 \rho^\circ&0&1&1&1&1&1&1&1\\\hline
 \rho^\cd&0&1&1&1&2&2&2&2\\\hline
 \end{array}.\]
\[T_M(x,y)=x - 2 x^2 + y - 3 x y + 3 x^2 y.\]
Set $\alpha:=\rho_{M\bs p}$ and $\beta:=\rho_{M/p}$.
\[\begin{array}{|r|c|c|c|c|}\hline
 X&\ \emptyset&\ 1&\ 2&\ 12\\\hline\hline
 \alpha&0&0&0&1\\\hline
 \alpha^*&0&0&0&1\\\hline
 \alpha^\circ&0&1&1&1\\\hline
 \alpha^\cd&0&1&1&1\\\hline
 \end{array}\,,\quad
 \begin{array}{|r|c|c|c|c|}\hline
 X&\ \emptyset&\ 1&\ 2&\ 12\\\hline\hline
 \beta&0&1&1&2\\\hline
 \beta^*&0&0&0&0\\\hline
 \beta^\circ&0&0&0&0\\\hline
 \beta^\cd&0&1&1&2\\\hline
 \end{array}\]
\[T_{M\bs p}(x,y)=-x-y+2xy;\quad T_{M/p}(x,y)=x^2.\]
\[(x-1)T_{M\bs p}(x,y)+(y-1)T_{M/p}(x,y)=T_M(x,y).\]
 \end{example}

From this we obtain the following recurrence for the Hamming polynomial.
\begin{theorem}\label{W-recurrence}
\[W_M(x,y,t)=(x-y)\,W_{M\bs p}(x,y,t))+t^{1-\rho(p)}y\,W_{M/p}(x,y,t).\]
\end{theorem}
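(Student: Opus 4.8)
The plan is to obtain the recurrence for $W_M$ directly from its Tutte counterpart by substituting into the defining formula \eqref{WHP-definition} and then letting the two parts of Lemma~\ref{reduction} absorb the resulting powers of $(x-y)$ and $y$. Write $u:=x/y$ and $v:=\bigl(x+(t-1)y\bigr)/(x-y)$ for the two arguments fed into the Tutte polynomial, so that $W_M(x,y,t)=(x-y)^{\eta(E)}y^{\rho(E)}\,T_M(u,v)$. The first step is to insert the deletion–contraction formula for $T_M$ from the preceding Proposition,
\[T_M(u,v)=(u-1)^{\eta^*(p)}\,T_{M\bs p}(u,v)+(v-1)^{1-\rho(p)}\,T_{M/p}(u,v),\]
which splits $W_M$ into two summands to be reshaped separately.

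The key auxiliary computations are the two elementary evaluations $u-1=(x-y)/y$ and $v-1=ty/(x-y)$; the second is exactly what manufactures the weight $t^{1-\rho(p)}$, since $(v-1)^{1-\rho(p)}=t^{1-\rho(p)}y^{1-\rho(p)}(x-y)^{\rho(p)-1}$. I also record, from the Proposition giving $\rho^{*\circ}(X)=\rho(E)-\rho(E\bs X)$, that $\eta^*(p)=\rho(E)-\rho(E\bs p)$, and from the deletion and contraction formulas that $M\bs p$ and $M/p$ both live on $E\bs p$ with $\rho(M\bs p)=\rho(E\bs p)$, $\rho(M/p)=\rho(E)-\rho(p)$, and nullities $\eta(M\bs p)=|E\bs p|-\rho(E\bs p)$, $\eta(M/p)=|E\bs p|-\rho(E)+\rho(p)$. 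Using \eqref{WHP-definition} for the minors I then rewrite $T_{M\bs p}(u,v)=(x-y)^{-\eta(M\bs p)}y^{-\rho(M\bs p)}W_{M\bs p}$ and $T_{M/p}(u,v)=(x-y)^{-\eta(M/p)}y^{-\rho(M/p)}W_{M/p}$.

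For the first summand I feed $(x-y)^{\eta(E)}y^{\rho(E)}(u-1)^{\eta^*(p)}$ into Lemma~\ref{reduction}(a), whose right-hand side is precisely $(x-y)\cdot(x-y)^{\eta(M\bs p)}y^{\rho(M\bs p)}$; this exactly cancels the prefactor of $W_{M\bs p}$, leaving the single term $(x-y)\,W_{M\bs p}(x,y,t)$. For the second summand I pull the scalar $t^{1-\rho(p)}y^{1-\rho(p)}$ out of $(v-1)^{1-\rho(p)}$ and apply Lemma~\ref{reduction}(b) to $(x-y)^{\eta(E)}y^{\rho(E)}(x-y)^{\rho(p)-1}$; its right-hand side supplies $(x-y)^{\eta(M/p)}$ together with factors $y^{\rho(p)}$ and $y^{\rho(E)-\rho(p)}$. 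Here the $y^{\rho(E)-\rho(p)}$ cancels the $y^{-\rho(M/p)}$ from the Hamming prefactor of $W_{M/p}$ (as $\rho(M/p)=\rho(E)-\rho(p)$), the $(x-y)^{\eta(M/p)}$ cancels $(x-y)^{-\eta(M/p)}$, and the remaining $y^{\rho(p)}$ combines with the surviving $y^{1-\rho(p)}$ to give exactly $y$; thus the second summand collapses to $t^{1-\rho(p)}y\,W_{M/p}(x,y,t)$. Adding the two summands yields the claimed identity.

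The argument is essentially bookkeeping, and indeed Lemma~\ref{reduction} was evidently engineered to soak up precisely these exponents. The only real obstacle is keeping the powers consistent while remembering that $\rho$ and $\eta$ for the minors are computed over the $(n-1)$-element ground set $E\bs p$ rather than over $E$; a single misattributed exponent there would misplace the decisive $t^{1-\rho(p)}$ weight, so the care goes into verifying that $\rho(E)-\rho(M/p)=\rho(p)$ and $\eta^*(p)=\rho(E)-\rho(E\bs p)$ feed correctly into parts (a) and (b).
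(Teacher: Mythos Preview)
Your proposal is correct and follows essentially the same route as the paper: start from the definition \eqref{WHP-definition}, substitute the Tutte deletion--contraction recurrence at $(u,v)=(x/y,(x+(t-1)y)/(x-y))$, compute $u-1=(x-y)/y$ and $v-1=ty/(x-y)$, and then invoke Lemma~\ref{reduction}(a),(b) to repackage the prefactors as $(x-y)^{\eta(M\bs p)}y^{\rho(M\bs p)}$ and $(x-y)^{\eta(M/p)}y^{\rho(M/p)}$ times the required leading coefficients. Your write-up is more explicit about tracking the rank and nullity of the two minors, but the argument is the paper's own.
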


\proof \begin{eqnarray*}
W_M(x,y,t)&=&(x-y)^{\eta(E)}y^{\rho(E)}\,T_M(x/y,(x+(t-1)y)/(x-y))\\
&=&(x-y)^{\eta(E)}y^{\rho(E)}[(x/y-1)^{\rho(E)-\rho(E\bs p)}\,T_{M\bs p}(x/y,(x+(t-1)y)/(x-y))\\
&&+((x+(t-1)y)/(x-y)-1)^{1-\rho(p)}\,T_{M/p}(x/y,(x+(t-1)y)/(x-y))]\\
&=&(x-y)(x-y)^{|E\bs p|-\rho(E\bs p)}y^{\rho(E\bs p)}\,T_{M\bs p}(x/y,(x+(t-1)y)/(x-y))\\
&&+(ty)^{1-\rho(p)}y^{\rho(p)}\\
&&\ \times\,(x-y)^{|E\bs p|-\rho(E)+\rho(p)}y^{\rho(E)-\rho(p)}\,T_{M/p}(x/y,(x+(t-1)y)/(x-y))\\
(\text{by }\ref{reduction})&=&(x-y)\,W_{M\bs
p}(x,y)+t^{1-\rho(p)}y\,W_{M/p}(x,y). \qed
\end{eqnarray*}

\begin{example} We continuous Example~\ref{Tutte-recurrence}.
\[W_M(x,y,t)=x^3 + 3 (t-1) x^2 y + 3 (1 - t) x y^2 + (t-1) y^3.\]
\[W_{M\bs p}(x,y,t)=x^2 + 2 (t-1) x y +(1 - t)y^2;\quad
W_{M/p}(x,y,t)=x^2.\]
\[(x-y)W_{M\bs p}(x,y,t)+t\, y\, W_{M/p}(x,y,t)=W_M(x,y,t).\]
\end{example}

%----------------------------------------------------------
\section{Extended Hamming polynomials}
%----------------------------------------------------------

For a combinatroid $M=(E,\rho)$ we define its \emph{characteristic polynomial}
as
\[p(M;t):=\sum_{X\subseteq E}(-1)^{|X|}t^{\rho(E)-\rho(X)}=(-1)^{\rho(E)}T_M(1-t,0)=
\sum_{X\subseteq E}(-1)^{|E\bs X|}t^{\eta^*(X)}.\] Thus the characteristic
polynomial of $M^*$ is
\[p(M^*;t)=\sum_{X\subseteq E}(-1)^{|E\bs X|}t^{\eta(X)}.\]
We generalize $p(M^*;t)$ for every $\sigma\subseteq E$ as:
$P_{M,\emptyset}(t):=1$ and
\begin{equation}\label{coefftr}
P_{M,\sigma}(t):=\sum_{\gamma\subseteq\sigma}(-1)^{|\sigma\bs\gamma|}t^{\eta(\gamma)}.
\end{equation}
We define the \emph{$j$-th generalized polynomial} $P_{M,j}(t)$ as
$P_{M,0}(t):=1$ and
\begin{equation}\label{Sum-coefftr}
P_{M,j}(t):=\sum_{|\sigma|=j}P_{M,\sigma}(t)=\sum_{|\sigma|=j}\,
\sum_{\gamma\subseteq\sigma}(-1)^{|\sigma\bs\gamma|}t^{\eta(\gamma)},\quad 1\leq
j\leq n.\end{equation} Identically as for matroids~\cite{JohRokVer-z}, we define
the \emph{Hamming polynomial} of a combinatroid $M$ by
 \begin{equation}\label{WPMj} W_M(x,y,t):=\sum_{j=0}^n P_{M,j}(t)x^{n-j}y^j.\end{equation}
Next, following~\cite{JohRokVer-z}, we will verify that this definition
coincides with the one given in Eq.~(\ref{WHP-definition}).

\begin{lemma} \[W_M(x,y,t)=\sum_{\sigma\subseteq E}(x-y)^{|E|-|\sigma|}y^{|\sigma|}\
t^{\eta(\sigma)}.\]
\end{lemma}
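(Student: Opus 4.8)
The plan is to start from the combinatorial definition in Eq.~(\ref{WPMj}) and reorganize the resulting triple sum so that the outer index becomes the nullity-carrying set rather than the cardinality $j$. First I would substitute Eq.~(\ref{Sum-coefftr}) into Eq.~(\ref{WPMj}), obtaining
\[ W_M(x,y,t)=\sum_{j=0}^n x^{n-j}y^j\sum_{|\sigma|=j}\sum_{\gamma\subseteq\sigma}(-1)^{|\sigma\bs\gamma|}t^{\eta(\gamma)}. \]
Since the summand depends on $\sigma$ through its size only via the factor $x^{n-j}y^j$, I can rewrite that factor as $x^{n-|\sigma|}y^{|\sigma|}$ and absorb it inside, thereby collapsing the sum over $j$ together with the sum over $\{\sigma:|\sigma|=j\}$ into a single sum over all subsets $\sigma\subseteq E$:
\[ W_M(x,y,t)=\sum_{\sigma\subseteq E}\sum_{\gamma\subseteq\sigma}(-1)^{|\sigma\bs\gamma|}t^{\eta(\gamma)}x^{n-|\sigma|}y^{|\sigma|}. \]

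Next I would interchange the order of summation, promoting $\gamma$ to the outer variable and letting $\sigma$ range over all supersets of $\gamma$ inside $E$. Setting $g:=|\gamma|$ and $\delta:=\sigma\bs\gamma$, each such $\sigma$ corresponds bijectively to a subset $\delta\subseteq E\bs\gamma$, with $|\sigma|=g+|\delta|$ and $|\sigma\bs\gamma|=|\delta|$. After factoring out $t^{\eta(\gamma)}y^{g}$, the inner sum over $\sigma$ becomes
\[ \sum_{\delta\subseteq E\bs\gamma}(-1)^{|\delta|}x^{(n-g)-|\delta|}y^{|\delta|}, \]
and grouping the $\binom{n-g}{d}$ subsets of each fixed size $d$ identifies this with the binomial expansion of $(x-y)^{n-g}$. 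Hence the inner sum contributes exactly $(x-y)^{n-|\gamma|}y^{|\gamma|}$.

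Assembling the pieces yields
\[ W_M(x,y,t)=\sum_{\gamma\subseteq E}t^{\eta(\gamma)}(x-y)^{n-|\gamma|}y^{|\gamma|}, \]
which is the asserted identity once one renames $\gamma$ to $\sigma$ and recalls $n=|E|$. I do not expect a genuine obstacle here: the argument is a change of summation order followed by a binomial collapse. The only point requiring care is the bookkeeping when passing from the pair $(\sigma,\gamma)$ with $\gamma\subseteq\sigma$ to the pair $(\gamma,\delta)$ with $\delta\subseteq E\bs\gamma$, and the recognition that the resulting alternating inner sum is precisely the expansion of $(x-y)^{n-|\gamma|}$; everything else is routine reindexing.
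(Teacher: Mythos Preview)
Your proof is correct and is essentially the same argument as the paper's, run in the opposite direction: the paper starts from the right-hand side, expands $(x-y)^{n-|\sigma|}$ via the binomial theorem, reinterprets the binomial sum as a sum over subsets $\gamma\subseteq E\bs\sigma$, swaps the order of summation, and recognizes the inner sum as $P_{M,E\bs\gamma}(t)$; you start from the definition~(\ref{WPMj}), swap the summation order, and collapse the inner alternating sum back into $(x-y)^{n-|\gamma|}$. The key ingredients (the $(\sigma,\gamma)\leftrightarrow(\gamma,\delta)$ reindexing and the binomial identity) are identical.
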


\proof Set $n=|E|$.
\begin{eqnarray*}
\sum_{\sigma}(x-y)^{n-|\sigma|}y^{|\sigma|} t^{\eta(\sigma)}
&=&\sum_{\sigma}\sum_{i=0}^{n-|\sigma|}{n-|\sigma|\choose
i}x^{i}(-y)^{n-|\sigma|-i}y^{|\sigma|}\ t^{\eta(\sigma)}\\
&=&\sum_{\sigma}\sum_{\gamma\subseteq E\bs\sigma}
x^{|\gamma|}y^{n-|\gamma|}(-1)^{n-|\sigma|-|\gamma|}\ t^{\eta(\sigma)}\\
&=&\sum_{\gamma}x^{|\gamma|}y^{n-|\gamma|}
\sum_{\sigma\subseteq E\bs\gamma}(-1)^{n-|\gamma|-|\sigma|}\ t^{\eta(\sigma)}\\
&=&\sum_{\gamma}x^{|\gamma|}y^{n-|\gamma|}P_{M,E\bs\gamma}(t)\\
&=&\sum_{\gamma}x^{n-|\gamma|}y^{|\gamma|}P_{M,\gamma}(t)\\
&=&W_M(x,y,t).\quad\qed
\end{eqnarray*}

\begin{theorem}\label{W-Tutte-relation}
\[W_M(x,y,t)=(x-y)^{\eta(E)}y^{\rho(E)}\,T_M(\frac{x}{y},\frac{x+(t-1)y}{x-y}).\]
\end{theorem}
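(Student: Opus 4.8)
The plan is to verify the identity by direct substitution into the definition of the Tutte polynomial in Eq.~(\ref{TuttePolynomial}) and then to recognize the outcome as the closed form established in the preceding Lemma. First I would set $u:=x/y$ and $v:=(x+(t-1)y)/(x-y)$, the two arguments appearing on the right-hand side, and record the two elementary simplifications $u-1=(x-y)/y$ and $v-1=ty/(x-y)$; the second of these is where the variable $t$ enters, since the numerator $x+(t-1)y-(x-y)$ collapses to $ty$.

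Next I would plug these into
\[T_M(u,v)=\sum_{A\subseteq E}(u-1)^{\rho(E)-\rho(A)}(v-1)^{|A|-\rho(A)}\]
and multiply by the prefactor $(x-y)^{\eta(E)}y^{\rho(E)}$. The core of the argument is a bookkeeping of exponents, carried out term by term over each $A\subseteq E$. Writing $k=\rho(E)$ and $n=|E|$, the contribution of $A$ gathers the factors $(x-y)^{n-k}$ and $y^{k}$ from the prefactor, $((x-y)/y)^{k-\rho(A)}$ from $(u-1)$, and $(ty/(x-y))^{|A|-\rho(A)}$ from $(v-1)$.

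The key observation is that the three separate exponent tallies telescope cleanly. Collecting the power of $(x-y)$ gives $(n-k)+(k-\rho(A))-(|A|-\rho(A))=n-|A|=|E\bs A|$, in which the dependence on $\rho(A)$ cancels; collecting the power of $y$ gives $k-(k-\rho(A))+(|A|-\rho(A))=|A|$; and the only power of $t$ is $|A|-\rho(A)=\eta(A)$. Hence the $A$-term equals $(x-y)^{|E|-|A|}y^{|A|}t^{\eta(A)}$, and summing over $A$ reproduces exactly the expression
\[\sum_{\sigma\subseteq E}(x-y)^{|E|-|\sigma|}y^{|\sigma|}t^{\eta(\sigma)}\]
of the Lemma, which equals $W_M(x,y,t)$.

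I do not anticipate a genuine obstacle here: the substitution is forced and the exponent arithmetic is routine. The one point demanding care is the cancellation of $\rho(A)$ in the exponent of $(x-y)$, since it is precisely this cancellation that turns the rank-weighted Tutte sum into the purely cardinality-weighted Hamming sum; I would double-check it against the small computation in Example~\ref{Tutte-recurrence} as a sanity test.
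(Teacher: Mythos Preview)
Your proposal is correct and is essentially the same computation as the paper's proof: both substitute $u-1=(x-y)/y$ and $v-1=ty/(x-y)$ into the Tutte sum, collect the exponents of $(x-y)$, $y$, and $t$ term by term, and then invoke the preceding Lemma to identify the result with $W_M(x,y,t)$. The only cosmetic difference is that the paper uses the nullity form~(\ref{TuttePolynomialNulity}) with exponents $\eta^*(E\bs\sigma)$ and $\eta(\sigma)$ and simplifies via $\eta^*(E\bs\sigma)-\eta(\sigma)=\rho(E)-|\sigma|$ before multiplying through by the prefactor, whereas you work directly with the exponents $\rho(E)-\rho(A)$ and $|A|-\rho(A)$ from~(\ref{TuttePolynomial}); the underlying exponent arithmetic is identical.
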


\proof
\begin{eqnarray*}
T_M(\frac{x}{y},\frac{x+(t-1)y}{x-y})&=&\sum_{\sigma}(\frac{x}{y}-1)^{\eta^*(E\bs\sigma)}
(\frac{x+(t-1)y}{x-y}-1)^{\eta(\sigma)}\\
&=&\sum_{\sigma}\frac{(x-y)^{\eta^*(E\bs\sigma)}}{y^{\eta^*(E\bs\sigma)}}\
\frac{(t y)^{\eta(\sigma)}}{(x-y)^{\eta(\sigma)}}\\
&=&\sum_{\sigma}\frac{(x-y)^{\eta^*(E\bs\sigma)-\eta(\sigma)}}
{y^{\eta^*(E\bs\sigma)-\eta(\sigma)}}\ t^{\eta(\sigma)}\\
&=&\sum_{\sigma}\frac{(x-y)^{\rho(E)-|\sigma|}}
{y^{\rho(E)-|\sigma|}}\ t^{\eta(\sigma)}\\
&=&\frac{(x-y)^{\rho(E)-n}}{y^{\rho(E)}}
\sum_{\sigma}(x-y)^{n-|\sigma|}y^{|\sigma|}\ t^{\eta(\sigma)}\\
&=&\frac{(x-y)^{\rho(E)-n}}{y^{\rho(E)}}W_M(x,y,t).\quad\qed
\end{eqnarray*}

\begin{theorem}\label{theorem-of-equivalence}
\[T_M(x,y)=(x-1)^{-\eta(E)}x^{|E|}\,W_M(1,x^{-1},(x-1)(y-1)).\]
\end{theorem}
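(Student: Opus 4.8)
The plan is to read Theorem~\ref{theorem-of-equivalence} as the inversion of Theorem~\ref{W-Tutte-relation}, obtained by a single well-chosen substitution into the three arguments of $W_M$. Writing that theorem with fresh dummy variables $X,Y,t$,
\[W_M(X,Y,t)=(X-Y)^{\eta(E)}Y^{\rho(E)}\,T_M\Big(\frac{X}{Y},\frac{X+(t-1)Y}{X-Y}\Big),\]
I would specialize $X=1$, $Y=x^{-1}$ and $t=(x-1)(y-1)$, and check that the two arguments of $T_M$ collapse exactly to $(x,y)$. For the first this is immediate, since $X/Y=x$. For the second I would compute $X-Y=(x-1)/x$ and $X+(t-1)Y=\big(x+(t-1)\big)/x$; here the choice $t=(x-1)(y-1)$ is engineered precisely so that $x+(t-1)=(x-1)+(x-1)(y-1)=(x-1)y$, whence $X+(t-1)Y=(x-1)y/x$ and the quotient equals $y$.

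With the arguments pinned down, the next step is to track the scalar prefactor. Under the same substitution $(X-Y)^{\eta(E)}Y^{\rho(E)}=\big((x-1)/x\big)^{\eta(E)}x^{-\rho(E)}$, so Theorem~\ref{W-Tutte-relation} becomes
\[W_M(1,x^{-1},(x-1)(y-1))=\Big(\frac{x-1}{x}\Big)^{\eta(E)}x^{-\rho(E)}\,T_M(x,y).\]
Solving for $T_M(x,y)$ introduces the factor $(x-1)^{-\eta(E)}x^{\eta(E)}x^{\rho(E)}$, and collapsing $x^{\eta(E)+\rho(E)}=x^{|E|}$ by means of $\eta(E)+\rho(E)=|E|$ yields exactly the claimed identity.

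The computation is entirely formal, so there is little to obstruct it; the only point demanding genuine care is the verification that the second argument of $T_M$ simplifies to $y$, since that is where the otherwise mysterious specialization $t=(x-1)(y-1)$ does all its work. I would also remark that for a general combinatroid both sides are a priori rational functions, so the manipulations live in $\Q(x,y)$ and one checks no identically-zero denominator is introduced; when $M$ is a demimatroid both sides are honest polynomials, consistent with the earlier remark following Eq.~(\ref{TuttePolynomialNulity}).
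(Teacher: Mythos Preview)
Your proposal is correct and follows essentially the same approach as the paper: both substitute $X=1$, $Y=x^{-1}$, $t=(x-1)(y-1)$ into Theorem~\ref{W-Tutte-relation} and simplify. The paper's own proof is simply a terser version of your computation, recording the resulting identity $W_M(1,x^{-1},(x-1)(y-1))=(x-1)^{n-\rho(E)}x^{-n}T_M(x,y)$ without spelling out the verification that the two arguments of $T_M$ collapse to $(x,y)$.
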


\proof A straightforward evaluation shows that\begin{eqnarray*}
 W_M(1,x^{-1},(x-1)(y-1))&=&(1-x^{-1})^{n-\rho(E)}x^{-\rho(E)}T_M(x,y)\\
 &=&(x-1)^{n-\rho(E)}x^{-n}T_M(x,y).\quad\qed
 \end{eqnarray*}

\begin{example} Let $\D$ be a simplicial complex of dimension $d$; so $d+1$
is the largest cardinality of a face. The $f$-{\it po\-ly\-no\-mial} of $\D$ is
defined as \[f(\D,t):=t^{d+1}+c_1 t^{d-1}+\cdots+c_d,\] where $c_i$ is the
number of faces of cardinality $i$, and its $h$-{\it polynomial} is defined as
$h(\D,t):=f(\D,t-1)$. It is well known that $f(\D,t)=T(t+1,1)$, where $T(x,y)$
is the Tutte polynomial of $\D$. Thus, by Theorem~\ref{theorem-of-equivalence},
\[f(\D,t)=(x+1)^{|E|}x^{-\eta(E)}W(1,(x+1)^{-1},0).\]
For instance, let $\D$ be the simplicial complex with facets $12,234,345$, i.e.
\[\D=\{\emptyset, 1, 2, 3, 4, 5, 12, 23, 24, 34, 35, 45, 234, 345\}.\]
\[T_{\D^\uparrow}(x,y)=x - 2 x^2 + x^3 + y - 4 x y + 4 x^2 y - y^2 + 2 x y^2.\]
\[W_{\D^\uparrow}(x,y,t)=x^5 + 4 (t-1) x^3 y^2 +
    4 (1-t) x^2 y^3 + (-1 - t + 2 t^2) x y^4 + (1-t) t y^5.\]
Thus, the $f$-polynomial of $\D$ is
    \[(x+1)^5x^{-2}W_{\D^\uparrow}(1,(x+1)^{-1},0)=x^3+5x^2+6x+2.\]
\end{example}

Let $M=(E,\rho)$ be a nontrivial demimatroid, $P_{M,j}(t)$ the polynomial
defined in Eq.~(\ref{Sum-coefftr}), and $\delta$ the minimum $j>0$ such that
$P_{M,j}(t)\neq 0$.

\begin{proposition} $\delta=\d_1(M^\circ)$ and $P_{M,\delta}(t)=c(t-1)$,
where \[c=|\{X\subseteq E: |X|=\d_1(M^\circ)\}|.\]\end{proposition}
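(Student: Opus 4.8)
The plan is to evaluate the polynomials $P_{M,j}(t)$ of Eq.~(\ref{Sum-coefftr}) directly, exploiting two features: that every $P_{M,j}(t)$ with $j\geq 1$ is divisible by $t-1$, and that $\eta=\rho^\circ$ is the rank function of the demimatroid $M^\circ$, so the stratification of Lemma~\ref{strat} governs which nullities can occur at a given cardinality. I write $n:=|E|$ and $d:=\d_1(M^\circ)$.

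First I would record the divisibility. For $\sigma\neq\emptyset$ the binomial theorem gives $P_{M,\sigma}(1)=\sum_{\gamma\subseteq\sigma}(-1)^{|\sigma\bs\gamma|}=(1-1)^{|\sigma|}=0$, so $P_{M,j}(1)=0$ and hence $(t-1)\mid P_{M,j}(t)$ for all $j\geq 1$. Next I would interchange the two summations in Eq.~(\ref{Sum-coefftr}): counting, for each $\gamma$, the number of $\sigma$ of size $j$ with $\gamma\subseteq\sigma$, the double sum collapses to
\[P_{M,j}(t)=\sum_{|\gamma|\leq j}(-1)^{j-|\gamma|}\binom{n-|\gamma|}{j-|\gamma|}\,t^{\eta(\gamma)},\]
so that the coefficient of $t^{r}$ is governed by the sets $\gamma$ with $\eta(\gamma)=r$.

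The heart of the argument is to feed in Lemma~\ref{strat} for $M^\circ$. Part~(iii) gives $\eta(\gamma)\geq r\Rightarrow|\gamma|\geq\d_r(M^\circ)$, and part~(ii) gives the strict chain $\d_1(M^\circ)<\d_2(M^\circ)<\cdots$. Hence $|\gamma|<d$ forces $\eta(\gamma)=0$, and $|\gamma|=d$ forces $\eta(\gamma)\in\{0,1\}$. For $1\leq j<d$ every $\gamma$ in the sum has $\eta(\gamma)=0$, so $P_{M,j}(t)$ is a constant; being divisible by $t-1$ it must vanish, giving $\delta\geq d$. For $j=d$ no term of degree $\geq 2$ can appear, so $P_{M,d}(t)=a_0+a_1t$. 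The linear coefficient $a_1$ collects exactly the $\gamma$ with $\eta(\gamma)=1$; each such $\gamma$ has $|\gamma|=d$ and thus weight $(-1)^{0}\binom{n-d}{0}=1$, so $a_1$ equals the number $c$ of minimum-cardinality sets of nullity one, $c=|\{X\subseteq E:|X|=\d_1(M^\circ),\ \eta(X)=1\}|$. Since $d$ is by definition the least cardinality attaining $\eta=1$, we have $c\geq 1$; combined with $(t-1)\mid P_{M,d}(t)$ and $\deg P_{M,d}\leq 1$ this forces $P_{M,d}(t)=c(t-1)\neq 0$, whence $\delta=d=\d_1(M^\circ)$.

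I expect the only delicate point to be the bookkeeping that isolates $a_1$: one must verify that a set of nullity one appearing in the $j=d$ sum has cardinality exactly $d$ (so its binomial weight is $1$ and its sign is $+1$) and that the nullity cannot reach $2$ at this level. Both facts are immediate from the strict monotonicity of the Wei numbers of $M^\circ$ in Lemma~\ref{strat}(ii); the remaining two steps—evaluation at $t=1$ and the reindexing of the double sum—are routine.
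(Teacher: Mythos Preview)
Your argument is correct and rests on the same core observation as the paper's proof: Lemma~\ref{strat} applied to $M^\circ$ forces $\eta(\gamma)=0$ whenever $|\gamma|<d$ and $\eta(\gamma)\in\{0,1\}$ whenever $|\gamma|=d$. The paper proceeds more directly, computing each individual $P_{M,\sigma}(t)$ and showing it equals $0$ (when $|\sigma|<d$, or $|\sigma|=d$ with $\eta(\sigma)=0$) or $t-1$ (when $|\sigma|=d$ with $\eta(\sigma)=1$), then summing. Your route via the divisibility $P_{M,j}(1)=0$ together with the degree bound is a mild variation that reaches the same endpoint without computing the constant term explicitly; the reindexed double sum is not strictly needed here but does no harm. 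One point worth flagging: your formula $c=|\{X:|X|=d,\ \eta(X)=1\}|$ is exactly what both arguments actually establish, whereas the proposition as stated in the paper drops the condition $\eta(X)=1$; Example~\ref{Contra} (where $d=2$, $P_{M,2}(t)=t-1$, but there are six pairs) shows the extra condition is necessary.
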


\begin{proof} Fix $X\subseteq E$ such that $\eta(X)=1$ and $|X|=\d_1(M^\circ)$.
If $\sigma\subseteq E$ and $|\sigma|<|X|$, then by Lemma~\ref{Wei-number}(i),
applied to the restriction of $\eta$ to $\sigma$, it holds that
$\eta(\sigma)=0$. Thus
$0=P_{M,\sigma}(t):=\sum_{\gamma\subseteq\sigma}(-1)^{|\sigma\bs\gamma|}t^{\eta(\gamma)}$.
The same result holds if $|\sigma|=|X|$ and $\eta(\sigma)=0$. On the other hand,
$P_{M,X}(t)=t-1$. Therefore, we obtain the desired result.\end{proof}

We call the number $\d_1(M^\circ)$ the \emph{formal minimum distance} of $M$.

\begin{proposition}\label{Tutte-of-um} Let $M=(E,\rho)$ be a uniform matroid, with
$n=|E|$ and $k=\rho(E)$. Then
\[T_M(x ,y)=\sum_{i=0}^{k-1}{n\choose i}(x-1)^{k-i}+{n\choose k}+
\sum_{i=k+1}^{n}{n\choose i}(y-1)^{i-k}.\]
\end{proposition}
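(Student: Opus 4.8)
The plan is to compute the Tutte polynomial of a uniform matroid directly from the defining sum in Eq.~(\ref{TuttePolynomial}), exploiting the fact that for a uniform matroid the rank $\rho(A)$ depends only on the cardinality $|A|$. Recall that a uniform matroid of rank $k$ on $n$ elements has $\rho(A)=\min\{|A|,k\}$ for every $A\subseteq E$. The key observation is that the summand $(x-1)^{\rho(E)-\rho(A)}(y-1)^{|A|-\rho(A)}$ is constant across all subsets $A$ of a fixed cardinality $i$, so we may group the sum by $|A|=i$ and multiply by the number of such subsets, which is $\binom{n}{i}$.

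First I would split the range of $i$ into three regimes according to how $\min\{i,k\}$ behaves. When $0\le i\le k-1$ we have $\rho(A)=i$, so the exponent of $(y-1)$ is $|A|-\rho(A)=0$ and the exponent of $(x-1)$ is $\rho(E)-\rho(A)=k-i$; this produces the first sum $\sum_{i=0}^{k-1}\binom{n}{i}(x-1)^{k-i}$. When $i=k$ we again have $\rho(A)=k$, giving both exponents equal to zero and hence a contribution of $\binom{n}{k}$, the isolated middle term. When $k+1\le i\le n$ we have $\rho(A)=k$, so the exponent of $(x-1)$ is $k-k=0$ while the exponent of $(y-1)$ is $i-k$; this yields the third sum $\sum_{i=k+1}^{n}\binom{n}{i}(y-1)^{i-k}$. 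Assembling the three regimes gives exactly the claimed formula.

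The only genuine content is the case analysis on $\min\{i,k\}$, and I expect no real obstacle here since the proof is a routine bookkeeping exercise once one recognizes that the Tutte sum collapses to a sum over cardinalities. The single point requiring a word of care is confirming the value $\rho(A)=\min\{|A|,k\}$ for the uniform matroid: this follows because every subset of size at most $k$ is independent (so $\rho(A)=|A|$) while every larger subset has rank capped at $k=\rho(E)$ by $(R_1)$ together with the fact that bases have size $k$. Everything else is an application of the binomial grouping, and the formula drops out by matching exponents regime by regime.
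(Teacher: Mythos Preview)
Your proposal is correct and follows essentially the same approach as the paper: both compute $\rho(A)=\min\{|A|,k\}$ for the uniform matroid, group the defining sum for the Tutte polynomial by cardinality $|A|=i$, and read off the three regimes $i<k$, $i=k$, $i>k$. The paper phrases the exponents in terms of $\eta(A)$ and $\eta^*(E\setminus A)$ rather than $\rho(E)-\rho(A)$ and $|A|-\rho(A)$, but this is only a notational difference.
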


\begin{proof} $\rho(X)=|X|$ if $|X|\leq k$ and $\rho(X)=k$ if $|X|>k$. Hence,
$\eta(X)=0$ if $|X|\leq k$ and $\eta(X)=r$ if $|X|=k+r$ with $r>0$. Moreover,
$\eta^*(X)=0$ if $|E\bs X|\geq k$, i.e. $|X|\leq n-k$, and $\eta^*(X)=r$ if
$|E\bs X|=k-r$ with $r>0$, i.e. $|X|=n-k+r$.\end{proof}

Let $M=(E,\rho)$ be a demimatroid. Set $n=|E|$ and write
\[W_M(x,y,t)=x^n+\sum_{j=\delta}^n A_j(t)x^{n-j}y^j.\]
where $\delta$ is the formal minimum distance of $M$.

\begin{proposition} Let $M=(E,\rho)$ be a uniform matroid, with
$n=|E|$, $k=\rho(E)$ and $\delta=\d_1(M^\circ)$. Then, for $\delta\leq i\leq n$,
\[A_i(t)=(t-1){n\choose i}\sum_{j=0}^{i-\delta}(-1)^j{i-1\choose j}t^{i-\delta-j}.\]
\end{proposition}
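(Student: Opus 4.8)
The plan is to reduce the statement to a single binomial identity in the variable $t$ and then verify that identity by comparing coefficients. First I would record that for the uniform matroid of rank $k$ the rank function is $\rho(X)=\min\{|X|,k\}$, so that the nullity $\eta(X)=|X|-\rho(X)=\max\{0,|X|-k\}$ depends only on the cardinality of $X$. In particular $\eta(X)=1$ exactly when $|X|=k+1$, whence the formal minimum distance is $\delta=\d_1(M^\circ)=k+1$ and $i-\delta=i-k-1$. By definition $A_i(t)=P_{M,i}(t)$ for $i\ge\delta$, and since in Eq.~(\ref{coefftr}) the quantity $\eta(\gamma)$ depends only on $|\gamma|$, the summand $P_{M,\sigma}(t)$ depends only on $|\sigma|$. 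As there are $\binom{n}{i}$ subsets $\sigma$ with $|\sigma|=i$, Eq.~(\ref{Sum-coefftr}) collapses to
\[A_i(t)=\binom{n}{i}\sum_{m=0}^{i}\binom{i}{m}(-1)^{i-m}t^{\max\{0,m-k\}}.\]
So it remains to identify the inner sum with $(t-1)\sum_{j=0}^{i-k-1}(-1)^j\binom{i-1}{j}t^{i-k-1-j}$.

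Next I would prove the identity
\[\Phi_i:=\sum_{m=0}^{i}\binom{i}{m}(-1)^{i-m}t^{\max\{0,m-k\}}=(t-1)\,g(t),\qquad g(t):=\sum_{j=0}^{i-k-1}(-1)^j\binom{i-1}{j}t^{i-k-1-j},\]
by comparing the coefficient of each power $t^\ell$ on both sides. Splitting the defining sum of $\Phi_i$ at $m=k$, the terms with $m\le k$ all carry $t^0$ and their total is evaluated with the partial alternating-sum identity $\sum_{m=0}^{k}(-1)^m\binom{i}{m}=(-1)^k\binom{i-1}{k}$, giving the constant term $(-1)^{i+k}\binom{i-1}{k}$; the terms with $m=k+\ell$, $\ell\ge 1$, contribute $(-1)^{i-k-\ell}\binom{i}{k+\ell}$ to the coefficient of $t^\ell$. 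On the other side, writing $g(t)=\sum_s c_s t^s$ with $c_s=(-1)^{i-k-1-s}\binom{i-1}{i-k-1-s}$, the coefficient of $t^\ell$ in $(t-1)g(t)$ is $c_{\ell-1}-c_\ell$, which after factoring $(-1)^{i-k-\ell}$ and applying Pascal's rule $\binom{i-1}{i-k-\ell}+\binom{i-1}{i-k-1-\ell}=\binom{i}{i-k-\ell}$ equals $(-1)^{i-k-\ell}\binom{i}{k+\ell}$ for $\ell\ge 1$, and equals $(-1)^{i+k}\binom{i-1}{k}$ for $\ell=0$ (here $c_{-1}=0$ and $\binom{i-1}{i-k-1}=\binom{i-1}{k}$). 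Matching these with the coefficients of $\Phi_i$ computed above gives $\Phi_i=(t-1)g(t)$, and multiplying by $\binom{n}{i}$ finishes the proof.

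The main obstacle is purely this coefficient bookkeeping in the second paragraph: the only non-formal ingredients are the partial alternating binomial identity and Pascal's rule, while the sign and index juggling (especially handling the constant term $\ell=0$ separately, where the block $m\le k$ is the sole contributor) is where errors are easiest to make. As a cross-check I would verify $\Phi_i\big|_{t=1}=(1-1)^i=0$ for $i\ge 1$, confirming divisibility by $(t-1)$, and test the formula against Proposition~\ref{Tutte-of-um} for small $n,k$. An alternative, equally viable route avoids the identity altogether: since for the uniform matroid $M\bs p=U_{k,n-1}$ and $M/p=U_{k-1,n-1}$ with $\rho(p)=1$, the recurrence of Theorem~\ref{W-recurrence} becomes $W_{U_{k,n}}=(x-y)W_{U_{k,n-1}}+y\,W_{U_{k-1,n-1}}$, and one can prove the closed form for $A_i(t)$ by induction on $n$, again invoking Pascal's rule on the $\binom{n}{i}$ factors; I would keep this as a fallback should the direct coefficient comparison become unwieldy.
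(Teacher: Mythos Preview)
Your argument is correct. The paper's own proof is a single sentence pointing to Proposition~\ref{Tutte-of-um} (the explicit Tutte polynomial of the uniform matroid) and leaves all the work implicit; following that hint one substitutes into Eq.~(\ref{WHP-definition}) to get $W_M(x,y,t)=\sum_{i=0}^{n}\binom{n}{i}(x-y)^{n-i}y^{i}\,t^{\max\{0,i-k\}}$, expands $(x-y)^{n-i}$, and uses $\binom{n}{i}\binom{n-i}{j-i}=\binom{n}{j}\binom{j}{i}$ to arrive at exactly the expression $A_i(t)=\binom{n}{i}\sum_{m}\binom{i}{m}(-1)^{i-m}t^{\max\{0,m-k\}}$ that you obtain directly from Eq.~(\ref{Sum-coefftr}). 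So the two routes differ only in their entry point---the paper goes through the Tutte polynomial, you go through the combinatorial definition of $P_{M,j}$---and converge on the same binomial identity $\Phi_i=(t-1)g(t)$, which you then prove in full while the paper leaves it tacit. Your coefficient comparison using the partial alternating sum $\sum_{m=0}^{k}(-1)^m\binom{i}{m}=(-1)^k\binom{i-1}{k}$ and Pascal's rule is clean and handles the boundary cases ($\ell=0$ and $\ell=i-k$) correctly. In short: same core computation, but you supply the missing details and avoid the detour through $T_M$.
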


\begin{proof} The proof readily follows from
Proposition~\ref{Tutte-of-um}.\end{proof}

\begin{example} Let $M^\circ=(E,\rho^\circ)$ be as in Example~\ref{Contra-full}.
$M^\circ$ is a uniform matroid of rank $1$. $T_{M^\circ}(x,y)= x+y+y^2$,
$W_{M^\circ}(x,y,t)=x^3 + 3 (t-1) x y^2 + (2 - 3 t + t^2) y^3$, $\delta=2$,
$A_1(t)=0$, $A_2(t)=3(t-1)$, $A_3(t)=2-3t+t^2$.\end{example}

\begin{example} Let $M^\circ=(E=\{1,2,3,4\},\rho^\circ)$ be the uniform
matroid given by:
\[\begin{array}{|r|c|c|c|c|c|c|c|c|c|c|c|c|c|c|c|c|}\hline
 X&\emptyset&1&2&3&4&12&13&14&23&24&34&123&124&134&234&E\\\hline\hline
 \rho&0&0&0&0&0&0&0&0&0&0&0&1&1&1&1&2\\\hline
 \rho^*&0&0&0&0&0&0&0&0&0&0&0&1&1&1&1&2\\\hline
 \rho^\circ&0&1&1&1&1&2&2&2&2&2&2&2&2&2&2&2\\\hline
 \rho^\cd&0&1&1&1&1&2&2&2&2&2&2&2&2&2&2&2\\\hline
 \end{array}.\]
 $M^\circ$ is a uniform matroid of rank $2$. $T_{M^\circ}(x,y)=2x+x^2+2y+y^2$,
 $W_{M^\circ}(x,y)=x^4 + 4 (t-1) x y^3 + (3 - 4 t + t^2) y^4$, $\delta=3$,
 $A_1(t)=0$, $A_2=0$, $A_3(t)=4(t-1)$, $A_4(t)=3-4t+t^2$.
\end{example}

%----------------------------------------------------------
\section{Elongations}
%----------------------------------------------------------

Let $M=(E,\rho)$ be a demimatroid with nullity function $\eta$. For $0\leq i\leq
\eta(M)$ we define the \emph{$i$-th elongation} of $M$ as the demimatroid
$M^{[i]}:=(E,\rho^{[i]})$, where
\[\rho^{[i]}(\sigma):=\min\{|\sigma|,\rho(\sigma)+i\},\]
or equivalently,
\[\rho^{[i]}(\sigma)=\begin{cases}
|\sigma|,& \eta(\sigma)\leq i\\
 \rho(\sigma)+i,& \eta(\sigma)>i.
\end{cases}\]
Note that $\rho^{[0]}\equiv\rho$, $\rho^{[i]}\equiv(\rho^{[1]})^{[i-1]}$ and
$\rho^{[\eta(M)]}(\sigma)=|\sigma|$ for all $\sigma\subseteq E$. When there is
no confusion, we will write $M[i]$ instead of $M^{[i]}$.

\begin{proposition} Let $M=(E,\rho)$ be a demimatroid. Then $M[i]$, as defined
above, is a demimatroid.\end{proposition}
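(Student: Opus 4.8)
The plan is to verify directly that $\rho^{[i]}$ satisfies the two defining axioms $(R_0)$ and $(R_1)$ of a demimatroid, using the characterization $\rho^{[i]}(\sigma)=\min\{|\sigma|,\rho(\sigma)+i\}$. The normalization axiom $(R_0)$ is immediate: since $\rho(\emptyset)=0$ and $|\emptyset|=0$, we have $\rho^{[i]}(\emptyset)=\min\{0,i\}=0$ (using $i\geq 0$). The real content is axiom $(R_1)$, namely that for $X\subseteq E$ and $x\in E$ one has $\rho^{[i]}(X)\leq\rho^{[i]}(X\cup x)\leq\rho^{[i]}(X)+1$.

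First I would dispose of the case $x\in X$, where $X\cup x=X$ and the inequality is trivial. So assume $x\notin X$. The key structural facts I would use are that $\rho^{[i]}$ is the pointwise minimum of two functions, $f(\sigma):=|\sigma|$ and $g(\sigma):=\rho(\sigma)+i$, each of which already satisfies the one-step growth bound $(R_1)$. Indeed, $f(X\cup x)=|X|+1=f(X)+1$ exactly, and $g(X)\leq g(X\cup x)\leq g(X)+1$ follows directly from property $(R_1)$ for $\rho$ itself. The monotonicity half $\rho^{[i]}(X)\leq\rho^{[i]}(X\cup x)$ is then easy, since the minimum of two non-decreasing functions is non-decreasing. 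The genuine obstacle is the upper bound $\rho^{[i]}(X\cup x)\leq\rho^{[i]}(X)+1$: the pointwise minimum of two functions each obeying a one-step increment bound of $1$ need not itself obey that bound in full generality, so this step must actually use the specific structure here.

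To handle the upper bound I would argue case by case according to which of $f$ or $g$ realizes the minimum at $X\cup x$. Writing $\rho^{[i]}(X\cup x)=\min\{f(X\cup x),g(X\cup x)\}$, in either case the realizing quantity is at most $f(X)+1$ or $g(X)+1$ respectively (by the increment bounds just recorded), and each of these is in turn at most $\min\{f(X),g(X)\}+1=\rho^{[i]}(X)+1$ provided the \emph{same} function also realizes the minimum at $X$. The only delicate situation is when the minimizing function switches between $X$ and $X\cup x$; there I would use the equivalent piecewise description in terms of nullity, $\rho^{[i]}(\sigma)=|\sigma|$ when $\eta(\sigma)\leq i$ and $\rho^{[i]}(\sigma)=\rho(\sigma)+i$ when $\eta(\sigma)>i$, together with the observation that $\eta(\sigma)=|\sigma|-\rho(\sigma)$ can increase by at most $1$ when a single element is adjoined (since $\rho$ is non-decreasing and $|\cdot|$ increases by exactly $1$). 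This controls the transition between the two regimes and forces the increment of $\rho^{[i]}$ to be $0$ or $1$.

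An alternative, perhaps cleaner, route I would consider is to invoke the earlier results of the paper rather than compute by hand. The function $|\cdot|\colon\sigma\mapsto|\sigma|$ and the function $\sigma\mapsto\rho(\sigma)+i$ are both demimatroids (the latter differs from $\rho$ by a constant shift that does not affect $(R_1)$, and its value at $\emptyset$ is $i$, so one should instead take the shift so as to respect normalization, or simply check $(R_1)$ directly as above). Since $\rho^{[i]}=|\cdot|\wedge(\rho+i)$ is a pointwise minimum, one is tempted to apply the lattice lemma asserting that $\alpha\wedge\beta$ is a demimatroid whenever $\alpha,\beta$ are. I would be cautious here, because that lemma is stated for genuine demimatroids (normalized at $\emptyset$) and the shifted function $\rho+i$ fails $(R_0)$; so the honest and self-contained argument is the direct two-case verification of the increment bound described above, which I expect to be the main technical step.
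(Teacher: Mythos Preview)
Your proposal is correct and follows essentially the same route as the paper: after disposing of $(R_0)$ and the case $x\in X$, both arguments case-split on which of $|X\cup x|$ or $\rho(X\cup x)+i$ realizes $\rho^{[i]}(X\cup x)$ and then observe that the same branch governs $\rho^{[i]}(X)$. One remark: your concern that the pointwise minimum of two functions each obeying a one-step increment bound ``need not itself obey that bound in full generality'' is unfounded, since $\min\{f(X\cup x),g(X\cup x)\}\le\min\{f(X)+1,\,g(X)+1\}=\min\{f(X),g(X)\}+1$ holds directly; hence the upper half of $(R_1)$ is just as immediate as the lower half, and your switching analysis via $\eta$, while valid, is not actually needed.
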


\proof Obviously $\rho^{[i]}(\emptyset)=0$. Let $X\subseteq E$ and $x\in E$.

If $x\in X$, obviously $\rho^{[i]}(X)\leq\rho^{[i]}(X\cup
x)\leq\rho^{[i]}(X)+1$, so we may assume $x\not\in X$.

If $\rho^{[i]}(X\cup x)=|X|+1$, thus $\rho^{[i]}(X)=|X|$ and
$\rho^{[i]}(X)\leq\rho^{[i]}(X\cup x)\leq\rho^{[i]}(X)+1$.

If $\rho^{[i]}(X\cup x)=\rho(X)+i$, thus $\rho^{[i]}(X)=\rho(X)+i$ and
$\rho^{[i]}(X)\leq\rho^{[i]}(X\cup x)\leq\rho^{[i]}(X)+1$.\quad\qed

\medskip Since $1\leq i\leq \eta(M)=|E|-\rho(E)$, it holds that
$\rho(E)+i\leq|E|$, so $\rho^{[i]}(M[i])=\rho(M)+i$. If $X\subseteq E$, then the
rank function of $M|_X$ is the restriction of $\rho$ to $X$. We point out that
from this it follows that $(M[i])|_X=(M|_X)[i]$.

\bigskip The nullity function of $M[i]$ is given by
\[\eta^{[i]}(\sigma)=\max\{0,\eta(\sigma)-i\},\]
or equivalently,
\[\eta^{[i]}(\sigma)=\begin{cases}
0,& \eta(\sigma)\leq i\\
 \eta(\sigma)-i,& \eta(\sigma)>i.
\end{cases}\]
An easy verification shows that
\begin{equation}\label{etai0} \eta^{[i]}(\sigma)=0\quad\text{if and only if}\quad
\eta(\sigma)\leq i.\end{equation}

\begin{proposition} Let $M=(E,\rho)$ be a demimatroid. Then
$\d_{r+1}(M^\circ)=\d_1(M[r]^\circ)$.\end{proposition}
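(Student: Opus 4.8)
The plan is to unravel both sides directly from the definitions. The right-hand side involves the demimatroid $M[r]$ and its nullity operation. Recall that $M[r]^\circ = (M[r])^\circ$ has rank function $\rho^{[r]\circ}(X) = |X| - \rho^{[r]}(X)$, which by the formula for the nullity of an elongation equals $\eta^{[r]}(X) = \max\{0, \eta(X) - r\}$. So the rank of $M[r]^\circ$ at $X$ is exactly $\eta^{[r]}(X)$. First I would use the definition of the first Wei number (Eq.~(\ref{Wei-number})) applied to the demimatroid $M[r]^\circ$:
\[
\d_1(M[r]^\circ) = \min\{|X| : \eta^{[r]}(X) = 1\}.
\]
Since $\eta^{[r]}(X) = 1$ means $\max\{0, \eta(X) - r\} = 1$, i.e.\ $\eta(X) = r+1$, this immediately becomes $\min\{|X| : \eta(X) = r+1\}$.

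Next I would identify the left-hand side. By the same definition of the Wei number applied to $M^\circ$, whose rank function at $X$ is $\rho^\circ(X) = \eta(X)$, we have
\[
\d_{r+1}(M^\circ) = \min\{|X| : \rho^\circ(X) = r+1\} = \min\{|X| : \eta(X) = r+1\}.
\]
Comparing the two displays gives the claimed equality. The only subtlety I need to handle carefully is that the Wei number $\d_{r+1}(M^\circ)$ is only defined when $r+1 \leq \rho^\circ(M) = \rho(M^\circ) = \eta(M)$, and likewise $\d_1(M[r]^\circ)$ requires $M[r]^\circ$ to be nontrivial, i.e.\ $\eta^{[r]}(E) = \eta(E) - r \geq 1$; both conditions reduce to $r \leq \eta(M) - 1$, so the ranges of validity agree.

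The main obstacle, such as it is, will be keeping the two nullity operations straight: the superscript $\circ$ (the nullity combinatroid, whose \emph{rank} is the original nullity $\eta$) must not be confused with the elongation's nullity \emph{function} $\eta^{[r]}$. The whole proof hinges on the single computational fact that the rank function of $M[r]^\circ$ is $\eta^{[r]} = \max\{0, \eta - r\}$, which follows from the stated formula for the nullity of an elongation together with $\rho^\circ = \eta$ by definition. Once that identification is in place, both sides collapse to $\min\{|X| : \eta(X) = r+1\}$ and there is nothing further to prove. I would also invoke Lemma~\ref{strat}(iv) in the mild form that $\min\{|X| : \eta^{[r]}(X) = 1\} = \min\{|X| : \eta^{[r]}(X) \geq 1\}$ if I wanted to phrase the first step through the ``$\geq$'' version of the Wei number, but since $\eta^{[r]}(X) \geq 1$ is equivalent to $\eta(X) \geq r+1$ and the minimum over $\eta(X) \geq r+1$ equals the minimum over $\eta(X) = r+1$ (again by Lemma~\ref{strat}(iv) applied to $M^\circ$), this is a cosmetic choice rather than a substantive step.
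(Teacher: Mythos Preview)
Your proof is correct and is essentially the same as the paper's: both rest on the observation that $\eta^{[r]}(X)=1$ if and only if $\eta(X)=r+1$, so that $\{X:\eta^{[r]}(X)=1\}=\{X:\eta(X)=r+1\}$. The paper simply phrases this as two inequalities via witnesses rather than identifying the two minimizing sets directly, but the content is identical.
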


\begin{proof} Choose $X\subseteq E$ such that $\eta(X)=r+1$ and
$|X|=\d_{r+1}(M^\circ)$. Hence $\eta^{[r]}(X)=\max\{0,\eta(X)-r\}=1$, and so
$\d_1(M[r]^\circ)\leq|X|=\d_{r+1}(M^\circ)$. Similarly, choose $Y\subseteq E$
such that $\eta^{[r]}(Y)=1$ and $|Y|=\d_1(M[r]^\circ)$. Hence $\eta(Y)=r+1$, and
so $\d_{r+1}(M^\circ)\leq|Y|=\d_1(M[r]^\circ)$.\end{proof}

%------------------------------------------------
\section{Betti numbers}
%------------------------------------------------

Let $R=K[x_1,\ldots,x_n]$ be a polynomial ring over the field $K$. We consider
$R$ provided with its canonical $\Z$-grading. Associated with each homogeneous
ideal $I$ of $R$ there is a \emph{minimal graded free resolution}
\[
 0 \ra \mathsmaller{\bigoplus}_j R(-j)^{\beta_{pj}} \ra \cdots \ra
 \mathsmaller{\bigoplus}_j R(-j)^{\beta_{1j}}\ra R \ra R/I\ra 0,\]
where $R(-j)$ denotes the $R$-module obtained by shifting the degrees of $R$ by
$j$, i.e $R(-j)_a=R_{a-j}$. The number $\beta_{ij}$ in the resolution may be
interpreted as the minimum number of generators of degree $j$ in the $i$-th
sizygie of $R/I$; or equivalently \[\beta_{ij}(R/I):=\beta_{ij} = \dim
\text{Tor}_i(R/I,K)_j.\] These $\beta_{ij}$'s are called the \emph{graded Betti
numbers} of $R/I$. We collect all they together by defining the {\it graded
Betti polynomial} of $R/I$ as
\[B(R/I;x,y):=\sum_{i=0}^p\sum_j\beta_{ij}x^i y^{j}.\]
\begin{example} Let $I\subset R=K[x_1,\ldots,x_5]$ be the monomial ideal
\[I=\langle x_1x_2,x_2x_3,x_3x_4,x_4x_5\rangle.\] We have the resolution
\[0 \ra R(-5) \ra R^3(-3)\oplus R(-4) \ra R^4(-2) \ra R \ra R/I \ra 0,\]
so that
\[B(R/I;x,y)=1+4xy^2+3x^2y^3+x^2y^4+x^3y^5.\]
\end{example}

Let $\D$ be a simplicial complex; we assume that all the vertices belongs to
$\D$. It is convenient, abusing notation, to identify $\sigma\subseteq[n]$ with
the characteristic vector $\sigma=(\sigma_i)\in\{0,1\}^n$ such that $\sigma_i =
1$ if $i \in \sigma$; and write $|\sigma|:=\sigma_1+\cdots+\sigma_n$. For
$\sigma\subseteq[n]$ we denote by $\D_\sigma$ the simplicial complex that
results from the restriction of $\D$ to the vertex set $\sigma$.

Given a simplicial complex $\D$, let $I_\D$ denote its \emph{Stanley-Reisner
ideal} and $K[\D]$ its \emph{Stanley-Reisner ring}, i.e. $I_\D=\langle
x_{i_1}\cdots x_{i_r}: \{i_1,\ldots,i_r\}\notin\D\rangle\subset R$ and
$K[\D]=R/I_\D$. Let's also denote by $\wt{H}_i(\D;K)$ the $i$-th \emph{reduced
homology group} of $\D$ with coefficients in the field $K$. We have the
fundamental result:
\begin{theorem}{\rm(Hochster's Formula \cite{H-z})}
Let $\D$ be a simplicial complex with vertex set $[n]$. Then
\[\beta_{ij}(R/I_\D) = \sum_{\sigma\subseteq[n];\,|\sigma|=j}\dim \wt{H}_{j-i-1}( \D_\sigma).\]
\end{theorem}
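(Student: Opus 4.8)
The plan is to refine the $\Z$-grading on $R=K[x_1,\ldots,x_n]$ to the fine $\Z^n$-grading and to compute the multigraded Betti numbers $\beta_{i,\mathbf{a}}(K[\D])$, $\mathbf{a}\in\Z^n$, by means of the Koszul complex, afterwards summing over all multidegrees of total weight $j$. Let $K_\bullet$ be the Koszul complex on $x_1,\ldots,x_n$, a $\Z^n$-graded free resolution of $K=R/(x_1,\ldots,x_n)$, so that $\text{Tor}_i^R(K[\D],K)\cong H_i(K[\D]\otimes_R K_\bullet)$ as $\Z^n$-graded modules. In homological degree $i$ one has $K_i=\bigoplus_{|W|=i}R\cdot e_W$, where $e_W$ carries multidegree $\mathbf{1}_W$ and the differential is $e_W\mapsto\sum_{k\in W}\pm x_k\,e_{W\bs k}$. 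Hence the degree-$\mathbf{a}$ strand of $K[\D]\otimes_R K_\bullet$ in position $i$ has $K$-basis the symbols $e_W$ with $|W|=i$ such that $x^{\mathbf{a}-\mathbf{1}_W}$ is a nonzero monomial of $K[\D]$; since $I_\D$ is squarefree, $x^{\mathbf b}\neq 0$ in $K[\D]$ exactly when $\text{supp}(\mathbf b)\in\D$, so the condition becomes $W\subseteq\text{supp}(\mathbf{a})$ together with $\text{supp}(\mathbf{a}-\mathbf{1}_W)\in\D$.

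First I would show that these strands are acyclic whenever $\mathbf{a}$ is not squarefree, which yields $\beta_{i,\mathbf{a}}=0$ unless $\mathbf{a}\in\{0,1\}^n$. If some coordinate satisfies $a_k\ge 2$, then $k\in\text{supp}(\mathbf{a}-\mathbf{1}_W)$ for every admissible $W$, and the maps sending $e_W$ to $\pm e_{W\cup k}$ (for $k\notin W$) provide a standard contracting homotopy, showing the identity on the degree-$\mathbf{a}$ strand is null-homotopic. This reduces the whole computation to the squarefree degrees $\mathbf{a}=\mathbf{1}_\sigma$ with $\sigma\subseteq[n]$.

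Next I would identify the squarefree strand with a reindexing of the reduced (co)chain complex of the restriction $\D_\sigma$. For $\mathbf{a}=\mathbf{1}_\sigma$ the admissible $W$ are precisely the subsets $W\subseteq\sigma$ with $\sigma\bs W\in\D_\sigma$; setting $\tau:=\sigma\bs W$, the basis in homological degree $i$ is indexed by the faces $\tau\in\D_\sigma$ with $|\tau|=|\sigma|-i$, that is, faces of dimension $|\sigma|-i-1=j-i-1$ where $j=|\sigma|$. Rewriting the Koszul differential through $\tau=\sigma\bs W$, the passage $e_W\mapsto\pm x_k\,e_{W\bs k}$ sends the face $\tau$ to a signed sum of the faces $\tau\cup\{k\}$, which (after the standard sign normalization) is exactly the reduced simplicial coboundary. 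Thus the degree-$\mathbf{1}_\sigma$ strand is the reduced cochain complex of $\D_\sigma$, giving $\text{Tor}_i^R(K[\D],K)_{\mathbf{1}_\sigma}\cong\wt{H}^{\,j-i-1}(\D_\sigma;K)$. Because $K$ is a field, $\dim\wt{H}^{\,j-i-1}(\D_\sigma)=\dim\wt{H}_{j-i-1}(\D_\sigma)$, and summing $\dim$ over all $\sigma$ with $|\sigma|=j$ produces the asserted formula.

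The hard part will be the sign bookkeeping in the third step: one must check that the Koszul incidence signs attached to $e_W\mapsto x_k\,e_{W\bs k}$ coincide with the simplicial incidence signs after the substitution $\tau=\sigma\bs W$, and confirm that the augmentation term (the copy of $K$ in homological degree $|\sigma|$, i.e.\ $W=\sigma$ and $\tau=\emptyset$, of dimension $-1$) is genuinely present, so that one obtains \emph{reduced} rather than ordinary homology. Once these conventions are pinned down, the identification is forced, and the field hypothesis removes any need for a further homology/cohomology duality argument.
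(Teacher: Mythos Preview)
The paper does not prove this theorem at all: it is stated as a classical result and attributed to Hochster's original paper, with no argument given. Your proposal is therefore not competing with any proof in the paper; rather, you are supplying what the authors take as background.

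That said, your outline is the standard and correct Koszul-complex proof of Hochster's formula (as in, e.g., Bruns--Herzog or Miller--Sturmfels). The three steps---reduction to squarefree multidegrees via a contracting homotopy in any coordinate with $a_k\ge 2$, identification of the $\mathbf{1}_\sigma$-strand of $K[\D]\otimes_R K_\bullet$ with the augmented simplicial cochain complex of $\D_\sigma$ under $\tau=\sigma\setminus W$, and the final summation over $|\sigma|=j$---are exactly right, and your caution about the sign normalization and the presence of the $(-1)$-dimensional augmentation term is well placed but routine. Over a field the passage from $\wt{H}^{\,j-i-1}$ to $\wt{H}_{j-i-1}$ is immediate, as you note. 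Nothing is missing; the argument would go through as written once the sign check is carried out.
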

If, instead of the $\Z$-grading, we consider $R$ provided with its
$\Z^n$-grading, and for any $\sigma\subseteq[n]$ we define
$\beta_{i\sigma}(R/I):=\dim \text{Tor}_i(R/I,K)_\sigma$, then we have
\begin{theorem}\label{MHF} {\rm(Multigraded Hochster's Formula)}
Let $\D$ be a simplicial complex with vertex set $[n]$. For any
$\sigma\subseteq[n]$ we have that \[\beta_{i\sigma}(R/I_\D) = \dim
\wt{H}_{|\sigma|-i-1}( \D_\sigma).\]\end{theorem}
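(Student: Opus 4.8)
The plan is to compute the multigraded Betti number $\beta_{i\sigma}(R/I_\D)=\dim_K \text{Tor}_i^R(K[\D],K)_\sigma$ by resolving the residue field $K=R/\mathfrak{m}$, with $\mathfrak{m}=(x_1,\ldots,x_n)$, by the Koszul complex $K_\bullet$ on $x_1,\ldots,x_n$. Since $\text{Tor}$ is balanced and may be computed from a free resolution of either argument, one has a $\Z^n$-graded isomorphism $\text{Tor}_i^R(K[\D],K)\cong H_i(K_\bullet\otimes_R K[\D])$. Recall that the free module $K_i=\bigwedge^i R^n$ has the $K$-basis $\{e_S:S\subseteq[n],\ |S|=i\}$ with $e_S$ placed in multidegree $\mathbf{1}_S$, and differential $\partial(e_S)=\sum_{k\in S}\pm\,x_k\,e_{S\bs k}$.

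First I would isolate the strand of $K_\bullet\otimes_R K[\D]$ in a fixed squarefree multidegree $\sigma\in\{0,1\}^n$. A basis element $x^\alpha e_S$ lies in degree $\sigma$ precisely when $S\subseteq\sigma$ and $x^\alpha=x^{\sigma\bs S}$; since $\sigma\bs S$ is squarefree, this monomial is nonzero in $K[\D]$ exactly when $\sigma\bs S$ is a face of $\D$, i.e. $\sigma\bs S\in\D_\sigma$. Writing $\tau:=\sigma\bs S$, the $K$-basis of $(K_\bullet\otimes K[\D])_\sigma$ in homological degree $i$ is thus indexed by the faces $\tau\in\D_\sigma$ with $|\tau|=|\sigma|-i$, that is, by faces of dimension $|\sigma|-i-1$.

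Next I would identify the differential. Under the substitution $S=\sigma\bs\tau$, lowering homological degree by one corresponds to deleting an element of $S$, hence to adjoining a vertex to $\tau$; so, up to the Koszul signs, $\partial$ is the simplicial coboundary map of $\D_\sigma$. This exhibits $(K_\bullet\otimes K[\D])_\sigma$ as a reindexing of the augmented reduced cochain complex of $\D_\sigma$, in which the face $\tau=\emptyset$ (occurring at $i=|\sigma|$) supplies the dimension $-1$ augmentation term. Taking homology at spot $i$ therefore yields $\wt{H}^{\,|\sigma|-i-1}(\D_\sigma;K)$, and since over the field $K$ reduced cohomology and reduced homology have equal dimension, $\beta_{i\sigma}=\dim_K\wt{H}_{|\sigma|-i-1}(\D_\sigma)$, as claimed. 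Summing over $|\sigma|=j$ recovers the $\Z$-graded Hochster formula stated above.

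The main obstacle is the bookkeeping in this last step: one must verify that the Koszul signs on $\partial$ match the incidence signs of the simplicial coboundary under the correspondence $S\leftrightarrow\sigma\bs S$, and that the reduced convention is honored at both ends of the complex — the top face $\tau=\sigma$ (present iff $\sigma\in\D$) at $i=0$, and the empty face at $i=|\sigma|$ giving the reduced augmentation. Everything else reduces to the standard facts that $\text{Tor}$ is balanced and that the entire resolution is squarefree-graded, so that no contributions arise outside squarefree multidegrees and the formula need only be checked for $\sigma\subseteq[n]$.
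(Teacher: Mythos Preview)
The paper does not actually prove this statement: the multigraded Hochster formula is quoted as a classical result (attributed to Hochster) and then used as a black box in the next section. There is therefore no ``paper's own proof'' to compare against.

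Your argument is the standard Koszul-complex proof of Hochster's formula, and it is correct. Computing $\text{Tor}_i^R(K[\D],K)$ via the Koszul resolution of $K$, restricting to a fixed squarefree multidegree $\sigma$, and identifying the resulting strand with the augmented reduced cochain complex of $\D_\sigma$ under the bijection $S\leftrightarrow\tau=\sigma\bs S$ is exactly how this result is usually established. The sign and augmentation bookkeeping you single out as the main obstacle is indeed the only place requiring care, and it goes through as you describe. Since the paper supplies no argument of its own, there is nothing further to compare.
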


%------------------------------------------------
\section{Hamming polynomial vs Betti numbers}
%------------------------------------------------

Let $\D$ be a simplicial complex of dimension $d$ and denote by $f_i$ the number
of $i$-dimensional faces of $\D$. The \emph{reduced Euler characteristic} of
$\D$ is defined as
\[\wt{\chi}(\D):=\sum_{i=-1}^d(-1)^i\dim\wt{H}_i(\D;K).\]

\begin{lemma}\label{EPF} {\rm(Euler-Poincar\'e formula)}
The reduced Euler characteristic of a simplicial complex does not depend of the
field and
\[\wt{\chi}(\D)=-1+f_0-\cdots+(-1)^{d}f_{d}.\]\end{lemma}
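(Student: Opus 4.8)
The plan is to reduce the statement to the standard Euler--Poincar\'e principle for finite chain complexes of finite-dimensional vector spaces, applied to the reduced (augmented) simplicial chain complex of $\D$ over $K$. First I would set up this complex
\[
\cdots\ra \wt{C}_i(\D;K)\ra \wt{C}_{i-1}(\D;K)\ra\cdots\ra \wt{C}_0(\D;K)\ra \wt{C}_{-1}(\D;K)\ra 0,
\]
where for $i\geq 0$ the space $\wt{C}_i(\D;K)$ is the $K$-vector space with basis the set of $i$-dimensional faces of $\D$ (so that $\dim_K\wt{C}_i=f_i$), and $\wt{C}_{-1}(\D;K)=K$ is spanned by the empty face (so that $\dim_K\wt{C}_{-1}=1$). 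By definition, $\wt{H}_i(\D;K)$ is the $i$-th homology of this complex, and thus $\wt{\chi}(\D)=\sum_{i=-1}^d(-1)^i\dim_K\wt{H}_i(\D;K)$.

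The key lemma I would isolate is purely linear-algebraic: if $0\ra C_n\ra\cdots\ra C_{-1}\ra 0$ is a complex of finite-dimensional $K$-vector spaces with boundary maps $\partial_i\colon C_i\ra C_{i-1}$, homology $H_i$, cycles $Z_i=\ker\partial_i$, and boundaries $B_i=\mathrm{im}\,\partial_{i+1}$, then $\sum_i(-1)^i\dim H_i=\sum_i(-1)^i\dim C_i$. I would prove it by combining the rank--nullity identity $\dim C_i=\dim Z_i+\dim B_{i-1}$ with $\dim H_i=\dim Z_i-\dim B_i$; in the alternating sum the boundary-space dimensions telescope and cancel, leaving the claimed equality.

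Applying this lemma to the reduced chain complex gives
\[
\wt{\chi}(\D)=\sum_{i=-1}^{d}(-1)^i\dim_K\wt{C}_i(\D;K)=-1+\sum_{i=0}^{d}(-1)^i f_i,
\]
which is exactly the asserted formula. Finally, the independence of the field is immediate: the right-hand side $-1+f_0-f_1+\cdots+(-1)^d f_d$ merely counts faces of $\D$ and does not involve $K$ at all; hence $\wt{\chi}(\D)$ takes the same value over every field, even though the individual reduced Betti numbers $\dim_K\wt{H}_i(\D;K)$ may depend on $K$. I do not expect any genuine obstacle here; the only point demanding care is the bookkeeping at the augmentation term $i=-1$, namely tracking the contribution $\dim_K\wt{C}_{-1}=1$ with its sign $(-1)^{-1}=-1$, which is precisely what produces the leading $-1$ in the formula.
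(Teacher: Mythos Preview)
Your argument is correct and is the standard proof of the Euler--Poincar\'e formula via the rank--nullity identity applied to the augmented simplicial chain complex; the bookkeeping at $i=-1$ is handled properly. Note, however, that the paper does not actually supply a proof of this lemma: it is simply stated as the classical Euler--Poincar\'e formula and used as a known fact, so there is no ``paper's own proof'' to compare against. Your write-up is a perfectly adequate justification that the authors chose to omit.
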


Let $M=(E,\rho)$ be a demimatroid with nullity function $\eta$, and let $M[i]$
be its $i$-th elongation. Set $n=|E|$ and denote by $I_{M[i]}$ the
Stanley-Reisner ideal of $M[i]$, viewed as a simplicial complex.

\begin{lemma}
For $\sigma\subseteq E$ the coefficient of $t^r$ in $P_{M,\sigma}(t)$ is equal
to
\[\sum_{i=0}^n(-1)^i\left(\beta_{i\sigma}(R/I_{M[r]})-\beta_{i\sigma}(R/I_{M[r-1]})\right).\]
\end{lemma}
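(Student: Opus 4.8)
The plan is to connect the combinatorial coefficient $P_{M,\sigma}(t)$ to the multigraded Betti numbers via the multigraded Hochster formula (Theorem~\ref{MHF}) and the Euler--Poincar\'e formula (Lemma~\ref{EPF}). First I would recall that, by definition (\ref{coefftr}), $P_{M,\sigma}(t)=\sum_{\gamma\subseteq\sigma}(-1)^{|\sigma\bs\gamma|}t^{\eta(\gamma)}$, so the coefficient of $t^r$ in $P_{M,\sigma}(t)$ is exactly
\[c_r:=\sum_{\gamma\subseteq\sigma,\ \eta(\gamma)=r}(-1)^{|\sigma\bs\gamma|}.\]
The right-hand side of the statement involves the Stanley--Reisner ideals of the elongations $M[r]$ and $M[r-1]$, each viewed as the simplicial complex $(M[r])^\downarrow=\{X:\rho^{[r]}(X)=|X|\}$, which by (\ref{etai0}) equals $\{X\subseteq E:\eta(X)\leq r\}$. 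Thus the plan hinges on identifying faces of these complexes with the sets $\gamma$ appearing in $c_r$.

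Next I would apply Theorem~\ref{MHF} to each elongation. For a fixed $\sigma$, restricting $M[r]$ to $\sigma$ gives the complex whose faces are $\{\gamma\subseteq\sigma:\eta(\gamma)\leq r\}$, and by the remark $(M[i])|_X=(M|_X)[i]$ before the Betti section this is the restriction of $\eta$ to $\sigma$. The key computation is the alternating sum
\[\sum_{i=0}^n(-1)^i\beta_{i\sigma}(R/I_{M[r]})
=\sum_{i=0}^n(-1)^i\dim\wt{H}_{|\sigma|-i-1}\bigl((M[r])_\sigma\bigr).\]
Reindexing by $j=|\sigma|-i-1$, this alternating sum of reduced homology dimensions is, up to sign $(-1)^{|\sigma|-1}$, precisely the reduced Euler characteristic $\wt{\chi}\bigl((M[r])_\sigma\bigr)$ from Lemma~\ref{EPF}. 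By the Euler--Poincar\'e formula this equals $(-1)^{|\sigma|-1}\bigl(-1+\sum_{k}(-1)^k f_k\bigr)$, where $f_k$ counts the $k$-dimensional faces of $(M[r])_\sigma$; equivalently, writing it over subsets $\gamma$ of $\sigma$, I would express it as $(-1)^{|\sigma|-1}\sum_{\gamma\subseteq\sigma,\ \eta(\gamma)\leq r}(-1)^{|\gamma|-1}$.

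Then I would take the difference between the $r$ and $r-1$ terms. Since the face set of $(M[r])_\sigma$ is exactly $\{\gamma\subseteq\sigma:\eta(\gamma)\leq r\}$ and that of $(M[r-1])_\sigma$ is $\{\gamma\subseteq\sigma:\eta(\gamma)\leq r-1\}$, subtracting the two Euler characteristics cancels all faces with $\eta(\gamma)\leq r-1$ and leaves exactly the faces with $\eta(\gamma)=r$. This yields
\[\sum_{i=0}^n(-1)^i\bigl(\beta_{i\sigma}(R/I_{M[r]})-\beta_{i\sigma}(R/I_{M[r-1]})\bigr)
=\sum_{\gamma\subseteq\sigma,\ \eta(\gamma)=r}(-1)^{|\sigma|-|\gamma|}=c_r,\]
matching the coefficient of $t^r$ in $P_{M,\sigma}(t)$, where I absorb the sign $(-1)^{|\sigma|-1}(-1)^{|\gamma|-1}=(-1)^{|\sigma\bs\gamma|}$.

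I expect the main obstacle to be bookkeeping the signs and index shifts correctly: one must be careful that the dimension of a face $\gamma$ (with $|\gamma|$ vertices) is $|\gamma|-1$, that the reduced homology index in Hochster's formula is $|\sigma|-i-1$, and that the reduced Euler characteristic carries the $-1$ term for the empty face. The genuinely substantive point is recognizing that the alternating sum of multigraded Betti numbers is a reduced Euler characteristic, which makes it field-independent and computable purely from face counts; once that identification is made, the telescoping between consecutive elongations is routine. A minor care point is the boundary case $r=0$, where $M[-1]$ is not defined, but the convention $P_{M,\sigma}$ built from $\eta(\gamma)=0$ faces handles the lowest level consistently, and the stated formula is intended for $r\geq 1$.
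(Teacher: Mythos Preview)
Your proposal is correct and follows essentially the same approach as the paper's proof: both identify the faces of $(M[r])_\sigma$ as $\{\gamma\subseteq\sigma:\eta(\gamma)\le r\}$ via~(\ref{etai0}), use the multigraded Hochster formula~(Theorem~\ref{MHF}) together with the Euler--Poincar\'e formula~(Lemma~\ref{EPF}) to equate the alternating sum of Betti numbers with a signed face count, and then difference consecutive elongations to isolate the $\eta(\gamma)=r$ terms. The only cosmetic difference is the direction of the computation---the paper starts from the coefficient side and works toward the Betti numbers, while you start from the Betti side---but the ingredients and the substance are identical.
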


\proof According to Eq.~(\ref{coefftr}), the coefficient of $t^r$ is
\[s_{r\sigma}=(-1)^{|\sigma|}\sum_{\gamma\subseteq\sigma;\,\eta(\gamma)=r}(-1)^{|\gamma|}.\]
From Eq.~(\ref{etai0}) we have
 \begin{eqnarray*}
 s_{r\sigma}
 &=&(-1)^{|\sigma|}\left(\sum_{\gamma\subseteq\sigma;\,\eta^{[r]}(\gamma)=0}(-1)^{|\gamma|}-
\sum_{\gamma\subseteq\sigma;\,\eta^{[r-1]}(\gamma)=0}(-1)^{|\gamma|}\right).
\end{eqnarray*}
By Eq.~\ref{etai0} and Lemma~\ref{EPF},
\begin{eqnarray*}
(-1)^{|\sigma|}\sum_{\gamma\subseteq\sigma;\,\eta^{[r]}(\gamma)=0}(-1)^{|\gamma|}
&=&(-1)^{|\sigma|+1}\left(\sum_{\gamma\subseteq\sigma;\,\eta^{[r]}(\gamma)=0}(-1)^{|\gamma|-1}\right)\\
&=&(-1)^{|\sigma|+1}\left(\sum_{i=-1}^{\rho^{[r]}(\sigma)}(-1)^i\dim\wt{H}_i(M[r]_\sigma;K)\right)\\
&=&(-1)^{|\sigma|+1}\left(\sum_{j=\eta^{[r]}(\sigma)-1}^{|\sigma|}(-1)^{|\sigma|-j-1}\dim\wt{H}_{|\sigma|-j-1}(M[r]_\sigma;K)\right)\\
&=&\sum_{j=\eta^{[r]}(\sigma)-1}^{|\sigma|}(-1)^{j}\dim\wt{H}_{|\sigma|-j-1}(M[r]_\sigma;K)\\
(\text{by}~\ref{MHF})&=&\sum_{j=0}^{|\sigma|}(-1)^j\beta_{j\sigma}(R/I_{M[r]}).
\end{eqnarray*}
Similarly,
\[(-1)^{|\sigma|}\sum_{\gamma\subseteq\sigma;\,\eta^{[r-1]}(\gamma)=0}(-1)^{|\gamma|}
=\sum_{j=0}^{|\sigma|}(-1)^j\beta_{j\sigma}(R/I_{M[r-1]}).\quad\qed\]

\begin{corollary}\label{bettimainlemma} For each $1\leq j\leq n$ the coefficient of
$t^r$ in $P_{M,j}(t)$ is equal to
\[\sum_{i=0}^n(-1)^i\left(\beta_{ij}(R/I_{M[r]})-\beta_{ij}(R/I_{M[r-1]})\right).\]
\end{corollary}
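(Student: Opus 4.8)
The plan is to obtain this Corollary directly from the preceding Lemma by summing over all subsets $\sigma$ of a fixed cardinality. By the definition in Eq.~(\ref{Sum-coefftr}), we have $P_{M,j}(t)=\sum_{|\sigma|=j}P_{M,\sigma}(t)$, so the coefficient of $t^r$ in $P_{M,j}(t)$ is precisely the sum, over all $\sigma\subseteq E$ with $|\sigma|=j$, of the coefficient of $t^r$ in $P_{M,\sigma}(t)$. First I would apply the preceding Lemma to each such $\sigma$, rewriting the coefficient of $t^r$ in $P_{M,\sigma}(t)$ as $\sum_{i=0}^n(-1)^i(\beta_{i\sigma}(R/I_{M[r]})-\beta_{i\sigma}(R/I_{M[r-1]}))$.

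Next I would interchange the two finite summations (over $|\sigma|=j$ and over $i$), which is harmless, to arrive at
\[\sum_{i=0}^n(-1)^i\left(\sum_{|\sigma|=j}\beta_{i\sigma}(R/I_{M[r]})-\sum_{|\sigma|=j}\beta_{i\sigma}(R/I_{M[r-1]})\right).\]
The only substantive point is then the identity $\beta_{ij}(R/I_\D)=\sum_{|\sigma|=j}\beta_{i\sigma}(R/I_\D)$, which expresses that the coarse $\Z$-graded Betti number in homological degree $i$ and internal degree $j$ is the sum of the fine $\Z^n$-graded Betti numbers over all multidegrees $\sigma$ of total weight $j$. In the present Stanley--Reisner setting this is immediate from comparing Hochster's Formula with its multigraded refinement in Theorem~\ref{MHF}: when $|\sigma|=j$ the homological shift $|\sigma|-i-1$ equals $j-i-1$, so summing $\dim\wt{H}_{|\sigma|-i-1}(\D_\sigma)$ over all $\sigma$ with $|\sigma|=j$ recovers exactly $\sum_{|\sigma|=j}\dim\wt{H}_{j-i-1}(\D_\sigma)=\beta_{ij}(R/I_\D)$.

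Applying this identity with $\D=M[r]$ and with $\D=M[r-1]$ collapses each inner sum and yields the claimed expression $\sum_{i=0}^n(-1)^i(\beta_{ij}(R/I_{M[r]})-\beta_{ij}(R/I_{M[r-1]}))$. I do not anticipate a genuine obstacle: the statement is a bookkeeping consequence of the Lemma together with the standard decomposition of graded Betti numbers into their multigraded pieces. The one place warranting care is to confirm that the numbers $\beta_{i\sigma}$ appearing in the Lemma are indeed the multigraded Betti numbers in the sense of Theorem~\ref{MHF}, so that the passage from $\beta_{i\sigma}$ to the $\Z$-graded $\beta_{ij}$ is legitimate; once this is in place, the conclusion follows.
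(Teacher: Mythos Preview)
Your proposal is correct and follows essentially the same approach as the paper: sum the preceding Lemma over all $\sigma$ with $|\sigma|=j$, interchange the finite sums, and use the identity $\beta_{ij}=\sum_{|\sigma|=j}\beta_{i\sigma}$ to pass from multigraded to $\Z$-graded Betti numbers. The paper's proof simply states this identity without the Hochster-formula justification you supply, but the argument is otherwise identical.
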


\proof Recall that $P_{M,j}(t)=\sum_{|\sigma|=j}P_{M,\sigma}(t)$ and
$\beta_{ij}(R/I_{M[r]})=\sum_{|\sigma|=j}\beta_{i\sigma}(R/I_{M[r]})$. Hence the
coefficient of $t^r$ in $P_{M,j}$ is
\[
\sum_{i=0}^n(-1)^i\left(\beta_{ij}(R/I_{M[r]})-\beta_{ij}(R/I_{M[r-1]})\right).
\qed
\]

\begin{theorem}\label{W-Betti-relation}
\[W(x,y,t)=x^n\sum_{r=0}^\eta\left(B_{M[r]}(-1,y/x)-B_{M[r-1]}(-1,y/x)\right)t^r.\]
\end{theorem}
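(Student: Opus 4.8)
The plan is to derive the identity directly from the definition of the Hamming polynomial in Eq.~(\ref{WPMj}) together with the Betti-number expression for the coefficients of the generalized polynomials $P_{M,j}(t)$ furnished by Corollary~\ref{bettimainlemma}. Write $n=|E|$ and $\eta=\eta(M)$. Since adding one element raises $|X|$ by $1$ and $\rho$ by $0$ or $1$, the nullity $\eta$ is itself a (nondecreasing) rank function, so $\eta(\gamma)\le\eta$ for all $\gamma\subseteq E$; hence each $P_{M,j}(t)$ is a polynomial in $t$ of degree at most $\eta$, and we may write $P_{M,j}(t)=\sum_{r=0}^{\eta}c_{j,r}t^r$ for suitable integers $c_{j,r}$. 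The first step is to substitute this expansion into Eq.~(\ref{WPMj}) and interchange the two finite summations, obtaining
\[W_M(x,y,t)=\sum_{r=0}^{\eta}\Big(\sum_{j=0}^{n}c_{j,r}\,x^{n-j}y^j\Big)t^r.\]
Thus it suffices to identify, for each fixed $r$, the inner coefficient with $x^n\big(B_{M[r]}(-1,y/x)-B_{M[r-1]}(-1,y/x)\big)$.

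For the second step I would unwind the Betti polynomial. By definition $B(R/I;x,y)=\sum_i\sum_j\beta_{ij}x^iy^j$, so
\[x^n\,B_{M[r]}(-1,y/x)=\sum_{j}\Big(\sum_i(-1)^i\beta_{ij}(R/I_{M[r]})\Big)x^{n-j}y^j,\]
and likewise for $M[r-1]$. Comparing coefficients of $x^{n-j}y^j$, the desired identification reduces precisely to showing, for each $j$, that
\[c_{j,r}=\sum_i(-1)^i\big(\beta_{ij}(R/I_{M[r]})-\beta_{ij}(R/I_{M[r-1]})\big),\]
which is exactly the content of Corollary~\ref{bettimainlemma} for $1\le j\le n$. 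This is the heart of the argument, and it is already in hand; what remains is bookkeeping at the boundary indices.

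The main delicate point, and the one I would treat most carefully, is the behaviour of the two extreme terms $j=0$ and $r=0$, which fall outside the range $1\le j\le n$ covered by Corollary~\ref{bettimainlemma}. For $j=0$ one has $P_{M,0}(t)=1$, contributing the summand $x^n$; this matches $\sum_i(-1)^i\beta_{i0}(R/I_{M[r]})$, which equals $1$ when $r=0$ (only $\beta_{00}=1$ survives) and $0$ otherwise, in agreement with the coefficients of $t^r$ in $P_{M,0}(t)=1$. For $r=0$ the formula invokes $B_{M[-1]}$, an elongation index outside the defined range $0\le i\le\eta$; here one adopts the convention $B_{M[-1]}\equiv 0$, which is forced by Eq.~(\ref{etai0}), since $\eta^{[-1]}(\gamma)=0$ holds for no $\gamma$ (the nullity being nonnegative) and the subtracted sum in Corollary~\ref{bettimainlemma} is therefore empty. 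Once these conventions are fixed the two displayed expressions agree coefficient by coefficient, and assembling them over all $r$ yields
\[W_M(x,y,t)=x^n\sum_{r=0}^{\eta}\big(B_{M[r]}(-1,y/x)-B_{M[r-1]}(-1,y/x)\big)t^r,\]
as claimed. I expect no genuine obstacle beyond this boundary bookkeeping, since the substantive combinatorial-homological work has been carried out in the preceding lemma and corollary.
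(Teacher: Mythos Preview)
Your proposal is correct and follows essentially the same route as the paper: start from $W_M(x,y,t)=\sum_j P_{M,j}(t)x^{n-j}y^j$, invoke Corollary~\ref{bettimainlemma} for the coefficients, interchange the finite sums, and recognize the resulting inner sums as evaluations of the Betti polynomials. Your explicit treatment of the boundary cases $j=0$ and $r=0$ (with the convention $B_{M[-1]}\equiv 0$, recorded in the paper's Remark immediately following the theorem) is more careful than the paper's own proof, which proceeds purely formally.
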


\proof By definition $W(x,y,t)=\sum_{j=0}^n P_{M,j}(t)x^{n-j}y^j$. By
Corollary~\ref{bettimainlemma},
\begin{eqnarray*}
W(x,y,t)&=&\sum_{j=0}^n\left(\sum_{r=0}^n\left(\sum_{i=0}^n(-1)^i
\left(\beta_{ij}(R/I_{M[r]})-\beta_{ij}(R/I_{M[r-1]})\right)\right)t^r\right)x^{n-j}y^j\\
&=&\sum_{r=0}^n\left(\sum_{j=0}^n\left(\sum_{i=0}^n(-1)^i
\left(\beta_{ij}(R/I_{M[r]})-\beta_{ij}(R/I_{M[r-1]})\right)\right)x^{n-j}y^j\right)t^r\\
&=&\sum_{r=0}^n\left(\sum_{i=0}^n(-1)^i\left(\sum_{j=0}^n
\left(\beta_{ij}(R/I_{M[r]})-\beta_{ij}(R/I_{M[r-1]})\right)x^{n-j}y^j\right)\right)t^r\\
&=&x^n\sum_{r=0}^n\left(\sum_{i=0}^n\left(\sum_{j=0}^n
\left(\beta_{ij}(R/I_{M[r]})-\beta_{ij}(R/I_{M[r-1]})\right)(-1)^i(y/x)^j\right)\right)t^r\\
&=&x^n\sum_{r=0}^{\eta(E)}\left(B_{M[r]}(-1,y/x)-B_{M[r-1]}(-1,y/x)\right)t^r.\quad\qed
\end{eqnarray*}

\begin{remark}
\begin{itemize}
 \item[(i)] $B_{M[-1]}(x,y)=0$ and  $B_{M[\eta(E)]}(x,y)=1$.
 \item[(ii)] $W(x,y,0)=x^n B_M(-1,y/x).$\end{itemize}\end{remark}

%------------------------------------------------
\section{Examples}
%------------------------------------------------

\begin{example}\label{casi-wheel} Let $G$ be the graph
  \[
 \xygraph{
 !{<0cm,0cm>:<0cm,2cm>:<-2cm,0cm>}
 !{(0,0);a(0)**{}?(0)}*{\bullet}@\cir{}="a1"
 !{(0,0);a(0)**{}?(1)}*{\bullet}@\circ{}="a4"
 !{(0,0);a(72)**{}?(1)}*{\bullet}@\cir{}="a3"
 !{(0,0);a(144)**{}?(1)}*{\bullet}@\cir{}="a2"
 !{(0,0);a(216)**{}?(1)}*{\bullet}@\cir{}="a6"
 !{(0,0);a(288)**{}?(1)}*{\bullet}@\cir{}="a5"
 "a1"-"a2" "a2"-"a3" "a3"-"a4" "a4"-"a5" "a5"-"a6"
 "a1"-"a2" "a1"-"a3" "a1"-"a4" "a1"-"a5" "a1"-"a6"
 "a1"!{+U*++!U{_1}} "a2"!{+U*++!RU{_2}}
 "a3"!{+U*++!R{_3}} "a4"!{+D*++!D{_4}} "a5"!{+U*++!L{_5}}
 "a6"!{+U*++!LU{_6}}
 }\]
and let $\D$ be the simplicial complex whose facets are the nine edges of this
graph. Let us consider $\D$ provided with its natural structure of demimatroid,
i.e. $\rho(\sigma)=\max\{|X|: X\subseteq\sigma \text{ and } X\in\D\}$. The
circuits of $\D$ are $\{24, 25, 26, 35, 36, 46, 123, 134, 145, 156\}$; here we
have written $24$ instead of $\{2,4\}$, and so on. We have
\[T(x,y)=-x + x^2 - y + 4 x y + 2 y^2 + x y^2 + 2 y^3 + y^4\]
and
\begin{eqnarray*}
 W(x,y,t)&=&(x-y)^4y^2\ T(\frac{x}{y},\frac{x+(t-1)y}{x-y})\\
   &=&x^6 + 6(-1 + t) x^4 y^2 + (4 - 5 t + t^2) x^3 y^3\\
    && +\, 3(3 - 7 t + 4 t^2) x^2 y^4 + 3(-4 + 11 t - 9 t^2 + 2 t^3) x y^5\\
    && +\, (4 - 13 t + 14 t^2 - 6 t^3 + t^4) y^6.
\end{eqnarray*}
The Betti polynomial of the elongations of $\D$, over $\Q$, are
\begin{eqnarray*}
B_0(x,y) &=&1 + 6x y^2+ 4x y^3 + 8x^2 y^3+ 12x^2 y^4+3x^3 y^4+12x^3y^5 +4 x^4y^6;\\
B_1(x,y) &=& 1 + x y^3+12x y^4+21x^2 y^5+9x^3y^6;\\
B_2(x,y) &=& 1+6x y^5+5x^2y^6;\\
B_3(x,y) &=& 1+x y^6;\\
B_4(x,y) &=& 1.
\end{eqnarray*}
From this we obtain
\begin{eqnarray*}x^6\sum_{r=0}^4\left(B_{M[r]}(-1,y/x)-B_{M[r-1]}(-1,y/x)\right)t^r&=&x^6 +
    6(-1 + t) x^4 y^2\hspace{-4cm}\\
    &&\hspace{-4cm} +\, (4 - 5 t + t^2) x^3 y^3 + 3(3 - 7 t + 4 t^2) x^2 y^4\\
    &&\hspace{-4cm} +\, 3(-4 + 11 t - 9 t^2 + 2 t^3) x y^5\\
    &&\hspace{-4cm} +\, (4 - 13 t + 14 t^2 - 6 t^3 + t^4) y^6.
\end{eqnarray*}
\end{example}

\begin{example} Let $\D$ be the simplicial complex whose faces are the
independent vertex sets of the graph $G$ in Example~\ref{casi-wheel}, i.e. the
facets of $\D$ are $\{1, 25, 35, 36, 246\}$. The circuits of $\D$ are all the
edges of $G$. We have
\[T(x,y)=x - 2 x^2 + x^3 + y - 2 x y + x^2 y + y^2 - 5 x y^2 +
    4 x^2 y^2 - 2 y^3 + 3 x y^3\]
    and
\begin{eqnarray*}
W(x,y,t)&=&(x-y)^3y^3\ T(\frac{x}{y},\frac{x+(t-1)y}{x-y})\\
&=&x^6+ 9(-1 + t)x^4 y^2+ (17 - 21 t + 4 t^2) x^3 y^3\\
&&+\, 12 (-1 + t) x^2 y^4 + 3(1 + t - 3 t^2 + t^3) x y^5 + t(-3 + 5t - 2
t^2)y^6.
\end{eqnarray*}
The Betti polynomial of the elongations of $\D$ are
\begin{eqnarray*}
B_0(x, y) &=&
    1 + 9x y^2 + 17x^2y^3 + x^2y^4 + 13x^3y^4 + 2x^3y^5 + 5x^4y^5 + x^4y^6 +
      x^5y^6;\\
B_1(x, y) &=& 1 + 4x y^3 + 3x y^4 + 3x^2y^4 + 6x^2y^5 + 3x^3y^6;\\
B_2(x, y) &=& 1 + 3x y^5 + 2x^2y^6;\\
B_3(x, y) &=& 1.
\end{eqnarray*}
From this we obtain
\begin{eqnarray*}x^6\sum_{r=0}^3\left(B_{M[r]}(-1,y/x)-B_{M[r-1]}(-1,y/x)\right)t^r&=&x^6
+\ 9(-1 + t)x^4 y^2\\ &&\hspace{-4cm}+\, (17 - 21 t + 4 t^2) x^3 y^3 +\ 12 (-1 +
t) x^2 y^4\\ &&\hspace{-4cm} +\, 3(1 + t - 3 t^2 + t^3) x y^5 + t(-3 + 5t - 2
t^2)y^6.
\end{eqnarray*}
\end{example}

\begin{example} Let $C$ be the Hamming linear $[8,4,4]_2$ code, with parity check
matrix
\[H=\left(\begin{matrix}
1&0&0&0&0&1&1&1\\
0&1&0&0&1&0&1&1\\
0&0&1&0&1&1&0&1\\
0&0&0&1&1&1&1&0\\
\end{matrix}\right).\]
We have
\[T(x,y)=6 x + 10 x^2 + 4 x^3 + x^4 + 6 y + 14 x y + 10 y^2 + 4 y^3 +
    y^4\]
and
\begin{eqnarray*}
W(x,y,t)&=&x^8 + 14 (-1 + t) x^4 y^4 +
    28 (2 - 3 t + t^2) x^2 y^6 \\
    &&+\,8 (-8 + 14 t - 7 t^2 + t^3) x y^7 + (21 - 42 t +
          28 t^2 - 8 t^3 + t^4) y^8.
\end{eqnarray*}
The Betti polynomial of the elongations of $M[H]$ are
\begin{eqnarray*}
 B_0(x,y)&=&1+14xy^4+56x^2y^6+64x^3y^7+21x^4y^8;\\
 B_1(x,y)&=&1+28xy^6+48x^2y^7+21x^3y^8;\\
 B_2(x,y)&=&1+8xy^7+7x^2y^8;\\
 B_3(x,y)&=&1+xy^8;\\
 B_4(x,y)&=&1.
\end{eqnarray*}
From this we obtain
\[x^8\sum_{r=0}^4\left(B_{M[r]}(-1,y/x)-B_{M[r-1]}(-1,y/x)\right)t^r=W(x,y,t).\]
\end{example}

\begin{example} Let $\D$ be the simplicial complex whose facets
are the $2$-dimensional faces determined by the triangulation of the projective
plane

\begin{figure}[ht]
$$
\xygraph{
 !{<0cm,0cm>;<1.5cm,0cm>;<0cm,1.5cm>}
 !{(0,0)}*+{\small\text{$_3$}}="v1"
 !{(2,0)}*+{\small\text{$_2$}}="v2"
 !{(3,1.5)}*+{\small\text{$_1$}}="v3"
 !{(2,3)}*+{\small\text{$_3$}}="v4"
 !{(0,3)}*+{\small\text{$_2$}}="v5"
 !{(-1,1.5)}*+{\small\text{$_1$}}="v6"
 !{(.5,2.25)}*+{\small\text{$_4$}}="v7"
 !{(.5,.75)}*+{\small\text{$_6$}}="v8"
 !{(2,1.5)}*+{\small\text{$_5$}}="v9"
 "v1"-"v2" "v2"-"v3" "v3"-"v4" "v4"-"v5" "v5"-"v6" "v6"-"v1"
 "v2"-"v9" "v9"-"v4" "v4"-"v7" "v6"-"v7" "v2"-"v8" "v8"-"v6"
 "v5"-"v7" "v3"-"v9" "v1"-"v8"
 "v7"-"v8" "v7"-"v9" "v8"-"v9"
 }
$$
\end{figure}

i.e., the facets of $\D$ are $\{124,234,345,135,125,256,236,136,146,456\}.$

In characteristic $2$ the Betti polynomials are
\begin{eqnarray*}
B_0(x,y) &=& 1 + 10x y^3 + 15x^2y^4 + 6x^3y^5 + x^3y^6 + x^4y^6;\\
B_1(x,y) &=& 1 + 6x y^5 + 5x^2y^6;\\
B_2(x,y) &=& 1 + x y^6;\\
B_3(x,y) &=& 1.
\end{eqnarray*}
In characteristic $3$ the Betti polynomials are
\begin{eqnarray*}
B_0(x,y) &=& 1 + 10x y^3 + 15x^2y^4 + 6x^3y^5;\\
B_1(x,y) &=& 1 + 6x y^5 + 5x^2y^6;\\
B_2(x,y) &=& 1 + x y^6;\\
B_3(x,y) &=& 1.
\end{eqnarray*}
Even though these polynomials do depend of the characteristic of the field, in
both cases it results that
\[T(x,y)=-4 x + 3 x^2 + x^3 - 4 y + 10 x y + 3 y^2 + y^3\]
and
\[W(x,y,t)=x^6 + 10 (-1 + t) x^3 y^3 -
    15 (-1 + t)x^2 y^4 +
    6 (-1 + t^2) x y^5 +
    t (5 - 6 t + t^2) y^6.\]

Note that the coefficient of $x^2 y^4$, i.e. $-15(t-1)$, is negative for any
$t>1$, so $W(x,y,t)$ cannot be the weight enumerator of any code over a finite
field.

\bigskip The Duursma zeta polynomial corresponding to $W(x,y,t)$ is
\[P_q(t)=(1/2)(1+(1-q)t+qt^2).\]

This polynomial has negative discriminant for
$q\in(3-2\sqrt{2},3-2\sqrt{2})\approx(0.17,5.82)$. For $q$ in this interval, the
roots of $P_q(t)$ lie in the circle $(x+1)^2+y^2=2$, moreover all roots have
module $1/\sqrt{q}$, so that $P_q(t)$ satisfies the Riemann hypothesis.
See~\cite{Duursma-z}.
\end{example}

\begin{example} Let $M$ be the Vamos matroid, i.e. the ground set is $E=\{1,\ldots,8\}$
and the bases are all the subsets of $E$ of size $4$, except $\{1234, 2356,
1456, 2378, 1478\}$. We have
\[T(x,y)=x^4+4x^3+10x^2+15x+5xy+15y+10y^2+4y^3+y^4\]
and
\begin{eqnarray*}
W(x,y,t)&=&x^8 + 5(-1 + t) x^4 y^4 +
    36(-1 + t) x^3 y^5
     + 2(55 - 69 t + 14 t^2) x^2 y^6\\
    && +\, 4(-25 + 37 t - 14 t^2 + 2 t^3) x y^7
     + (30 - 51 t + 28 t^2 - 8 t^3 + t^4) y^8.
\end{eqnarray*}
\end{example}

%------------------------------------------------
\section{Generalized Hamming polynomial}
%------------------------------------------------

For positive integers $j\leq m$ and $q$ an indeterminate, let us define
\begin{eqnarray*}
 [m]_q&:=&1+q+\cdots+q^{m-1}\\
 {[m]_q}!&:=&[1]_q\,[2]_q\cdots[m]_q\\
 \left[\begin{array}{c} m\\ j\end{array}\right]_q
 &:=&\frac{[m]_q!}{[j]_q!\,[m-j]_q!}\\
 \langle m\rangle_q&:=&(q^m-1)(q^m-q)\cdots(q^m-q^{m-1}).
\end{eqnarray*}

Since \[\left[\begin{array}{c} m\\ j\end{array}\right]_q=
 \left[\begin{array}{c} m-1\\ j\end{array}\right]_q+
 q^{m-j}\left[\begin{array}{c} m-1\\ j-1\end{array}\right]_q,\] it follows that
 all these are polynomials in $q$ with integer coefficients.

\medskip Let $M=(E,\rho)$ be a combinatroid. Set $n=|E|$ and $k=\rho(E)$.
Following~\cite{JurrPellik-z}, for $1\leq r\leq n$, we define the
$r$-generalized Hamming weight enumerator
\[W^{(r)}(x,y,q):=\frac{1}{\langle r\rangle_q}\sum_{j=0}^r
    \left[\begin{array}{c}r\\ j\end{array}\right]_q(-1)^{r - j}\,q^{r-j\choose 2}
    (x - y)^{n - k}\, y^k\, T_M(\frac{x}{y},\frac{x +(q^j - 1)y}{x-y}).\]
\begin{conjecture}\label{rW-Jurrius} Let $M=(E,\rho)$ be a combinatroid.
Set $n=|E|$ and $k=\rho(E)$. Then \begin{equation}\label{conj-rw-Jurrius} T_M(x,
y)=
    x^n(x - 1)^{k - n}\sum_{r=0}^{n-k}\left(\prod_{j=0}^{r-1}((x - 1)(y - 1) - q^j)
    \right)W^{(r)}(1,1/x,q).\end{equation}
\end{conjecture}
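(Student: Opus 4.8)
The plan is to recognize the right-hand side as a $q$-analogue of Newton's forward-difference interpolation formula applied to the extended Hamming polynomial, and then to invert it using the two equivalences already established, Theorem~\ref{W-Tutte-relation} and Theorem~\ref{theorem-of-equivalence}. First I would observe that, by Theorem~\ref{W-Tutte-relation} with $\eta(E)=n-k$ and $\rho(E)=k$, the factor occurring inside the definition of $W^{(r)}$ is just an evaluation of the extended Hamming polynomial,
\[
(x-y)^{n-k}y^{k}\,T_M\Big(\frac{x}{y},\frac{x+(q^{j}-1)y}{x-y}\Big)=W_M(x,y,q^{j}),
\]
so that
\[
W^{(r)}(x,y,q)=\frac{1}{\langle r\rangle_q}\sum_{j=0}^{r}\left[\begin{array}{c}r\\ j\end{array}\right]_q(-1)^{r-j}q^{\binom{r-j}{2}}\,W_M(x,y,q^{j}).
\]
Thus, for fixed $x,y$, the quantity $W^{(r)}(x,y,q)$ is assembled only from the values of the single-variable polynomial $t\mapsto W_M(x,y,t)$ at the geometric nodes $t=q^{0},q^{1},\dots,q^{r}$.

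Next I would specialize $x\mapsto 1$, $y\mapsto 1/x$ and set $w(t):=W_M(1,1/x,t)$, a polynomial in $t$ with coefficients rational in $x$. By Theorem~\ref{theorem-of-equivalence} we have $T_M(x,y)=x^{n}(x-1)^{k-n}\,w\big((x-1)(y-1)\big)$, so, writing $u:=(x-1)(y-1)$, the asserted identity (\ref{conj-rw-Jurrius}) is equivalent to the purely one-variable statement
\[
w(u)=\sum_{r=0}^{n-k}\Big(\prod_{j=0}^{r-1}(u-q^{j})\Big)\,W^{(r)}(1,1/x,q),
\]
which is exactly a Newton interpolation of $w$ in the basis of $q$-factorial products $\prod_{j=0}^{r-1}(u-q^{j})$.

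The combinatorial heart of the argument is to identify these coefficients as divided differences. Splitting $\prod_{i\neq j}(q^{j}-q^{i})$ into its factors with $i<j$ and with $i>j$ and using $q^{m}-1=(q-1)[m]_q$ throughout, I would obtain the $q$-Vandermonde evaluation
\[
\prod_{\substack{i=0\\ i\neq j}}^{r}(q^{j}-q^{i})=(-1)^{r-j}q^{\binom{j}{2}+j(r-j)}(q-1)^{r}[j]_q!\,[r-j]_q!,
\]
together with $\langle r\rangle_q=q^{\binom{r}{2}}(q-1)^{r}[r]_q!$. Since $\binom{r}{2}=\binom{r-j}{2}+\binom{j}{2}+j(r-j)$, these combine into
\[
\frac{(-1)^{r-j}q^{\binom{r-j}{2}}}{\langle r\rangle_q}\left[\begin{array}{c}r\\ j\end{array}\right]_q=\frac{1}{\prod_{i\neq j}(q^{j}-q^{i})},
\]
whence $W^{(r)}(1,1/x,q)=\sum_{j=0}^{r}w(q^{j})\big/\prod_{i\neq j}(q^{j}-q^{i})$ is precisely the divided difference $[q^{0},\dots,q^{r}]\,w$ at the nodes $q^{0},\dots,q^{r}$. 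With this identification the displayed one-variable identity is Newton's interpolation formula at those nodes, and it is exact as soon as $\deg_t w\le n-k$: in that case the $n-k+1$ products $\prod_{j=0}^{r-1}(u-q^{j})$ form a basis for the polynomials of degree at most $n-k$, and the higher divided differences $W^{(r)}$ with $r>n-k$ vanish.

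The main obstacle is precisely this degree bound. From the Lemma giving $W_M(x,y,t)=\sum_{\sigma}(x-y)^{n-|\sigma|}y^{|\sigma|}t^{\eta(\sigma)}$, the $t$-degree of $w$ equals $\max_{\sigma\subseteq E}\eta(\sigma)$. For a demimatroid this is controlled, because $(R_1)$ forces $\eta$ to be non-decreasing, so $\max_{\sigma}\eta(\sigma)=\eta(E)=n-k$; the interpolation is then exact and the identity follows as a theorem. For a general combinatroid, however, $\rho$ may take arbitrary integer values, and $\eta$ need be neither monotone nor bounded by $n-k$, so the truncated sum may fail to reconstruct $w$. Securing this degree control — equivalently, showing that divided differences of order exceeding $n-k$ do not survive the specialization to $T_M$ — is exactly the step at which the matroid and demimatroid argument does not transfer verbatim, and is the reason the statement is posed only conjecturally in full combinatroidal generality.
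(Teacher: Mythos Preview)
The paper offers no proof of this statement: it is stated as a conjecture, supported only by the remark that Jurrius established the linear-code case and by several worked examples. Your argument therefore goes beyond what the paper provides. The reduction via Theorems~\ref{W-Tutte-relation} and~\ref{theorem-of-equivalence} to the one-variable identity $w(u)=\sum_{r}\big(\prod_{j<r}(u-q^{j})\big)\,W^{(r)}(1,1/x,q)$ is correct, and your $q$-Vandermonde computation identifying $\langle r\rangle_q^{-1}(-1)^{r-j}q^{\binom{r-j}{2}}\left[\begin{smallmatrix}r\\ j\end{smallmatrix}\right]_q$ with $\prod_{i\ne j}(q^{j}-q^{i})^{-1}$ is exactly what is needed to recognize $W^{(r)}(1,1/x,q)$ as the divided difference of $w$ at the geometric nodes $q^{0},\dots,q^{r}$. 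Newton interpolation at $n-k+1$ distinct nodes then recovers $w$ exactly provided $\deg_{t}w\le n-k$, and your observation that axiom $(R_1)$ forces $\eta$ to be non-decreasing, so that $\max_{\sigma}\eta(\sigma)=\eta(E)=n-k$, secures this bound for demimatroids. You have thus in fact \emph{proved} the conjecture in the demimatroid case, which the paper does not do.

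Your diagnosis of the general combinatroid case is also accurate: without $(R_1)$ the nullity $\eta(\sigma)$ need not be bounded above by $\eta(E)$ (and may even be negative, making $w$ a Laurent polynomial in $t$), so the truncated interpolation at $n-k+1$ nodes has no reason to be exact. This is a genuine obstruction rather than a gap in your reasoning, and it accounts for why the paper leaves the statement conjectural in full generality.
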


\begin{remark} When $M$ is the associated matroid to a linear code, via its
parity check matrix, this conjecture has been proved by Jurrius~\cite[Thm.
3.3.5]{MThJurrius-z}.\end{remark}

\begin{example} Let $C$ be the binary linear $[6,3]$ code with parity check
matrix
\[
H=\left[
\begin{array}{rrrrrr}
1&1&1&1&0&0\\
1&1&1&0&1&0\\
1&1&1&0&0&1
\end{array}
\right]\!\!.\] (See Example $C_1$ of Section 5.2 in \cite{MThJurrius-z}) The
bases of the matroid $M[H]$ are \[\{145, 146, 156, 245, 246, 256, 345, 346, 356,
456\},\] and its Tutte polynomial is
\[T(x,y)=x + x^2 + x^3 + y + x y + x^2 y + y^2 + x y^2 + x^2 y^2 +
    y^3.\]
\begin{eqnarray*}
 W^{(0)}(x,y,t)&=&x^6;\\
W^{(1)}(x,y,t)&=&3 x^4y^2 + (-2 + t) x^3 y^3 + 3 x^2 y^4 +
        3 (-2 + t) x y^5 + (3 - 3 t + t^2) y^6;\\
W^{(2)}(x,y,t)&=&x^3y^3 + 3 x y^5 + (-3 + t + t^2)
y^6;\\
W^{(3)}(x,y,t)&=&y^6.
\end{eqnarray*}
Substituting these $W^{(r)}$'s in Eq. (\ref{conj-rw-Jurrius}) we recover
$T(x,y)$.
\end{example}

\begin{example} Let $C$ be the binary linear $[6,3]$ code with parity check
matrix
\[
H=\left[
\begin{array}{rrrrrr}
1&0&0&1&0&0\\
0&1&0&0&1&0\\
0&0&1&0&0&1
\end{array}
\right]\!\!.\] (See Example $C_1$ of Section 5.2 in \cite{MThJurrius-z}) The
bases of the matroid $M[H]$ are \[\{123, 126, 135, 156, 234, 246, 345, 456\},\]
and its Tutte polynomial is
\[T(x,y)=x^3 + 3 x^2 y + 3 x y^2 + y^3.\]
 \begin{eqnarray*}
W^{(0)}(x,y,t)&=&x^6;\\
W^{(1)}(x,y,t)&=&3 x^4 y^2 +
    3 (-1 + t) x^2 y^4 + (-1 + t)^2 y^6;\\
W^{(2)}(x,y,t)&=&3 x^2 y^4 + (-2 + t + t^2) y^6;\\
W^{(3)}(x,y,t)&=&y^6.
 \end{eqnarray*} Substituting these $W^{(r)}$'s in Eq.
(\ref{conj-rw-Jurrius}) we recover $T(x,y)$.
\end{example}

\begin{example} Let $C$ be the binary Hamming linear $[7,4]$ code with parity
check matrix
\[
H=\left[
\begin{array}{rrrrrrr}
0&1&1&1&1&0&0\\
1&0&1&1&0&1&0\\
1&1&0&1&0&0&1
\end{array}
\right]\!\!.\] The Tutte polynomial of $M[H]$ is
\[T(x,y)=3 x + 4 x^2 + x^3 + 3 y + 7 x y + 6 y^2 + 3 y^3 + y^4.\]
 \begin{eqnarray*}
 W^{(0)}(x,y,t)&=&x^7;\\
 W^{(1)}(x,y,t)&=&7 x^4y^3 + 7 x^3 y^4 + 21 (-2 + t) x^2 y^5 +
        7(6 - 5 t + t^2) x y^6\\
        &&\ + (-13 + 15 t - 6 t^2 +
              t^3) y^7;\\
 W^{(2)}(x,y,t)&=&21 x^2y^5 +
        7 (-5 + t + t^2) x y^6 + (15 - 6 t - 5 t^2 + t^3 +
              t^4) y^7;\\
W^{(3)}(x,y,t)&=&7 x y^6+ (-6 + t + t^2 + t^3) y^7;\\
W^{(4)}(x,y,t)&=&y^7.
 \end{eqnarray*}
Substituting these $W^{(r)}$'s in Eq. (\ref{conj-rw-Jurrius}) we recover
$T(x,y)$.
\end{example}

\begin{example}
Let $M$ be the demimatroid in Example~\ref{Contra-full}.
\[\begin{array}{|r|c|c|c|c|c|c|c|c|}\hline
 X&\ \emptyset&\ 1&\ 2&\ 3&12&13&23&E\\\hline\hline
 \rho&0&0&0&0&1&1&1&2\\\hline
 \rho^*&0&0&0&0&0&0&0&1\\\hline
 \rho^\circ&0&1&1&1&1&1&1&1\\\hline
 \rho^\cd&0&1&1&1&2&2&2&2\\\hline
 \end{array}.\]
\[T_M(x,y)=x - 2 x^2 + y - 3 x y + 3 x^2 y.\]
\[W_M(x,y,t)=x^3 + 3 (t-1) x^2 y + 3 (1 - t) x y^2 + (t-1) y^3.\]
\begin{eqnarray*}
W^{(0)}(x,y,t)&=&(x-y)^3x^3;\\
W^{(1)}(x,y,t)&=&(x - y)^3 y (3 x^2 - 3 x y + y^2);\\
W^{(2)}(x,y,t)&=&0.
\end{eqnarray*}
\[x^6(x-y)^{-4}[W^{(0)}(1,1/x,t)+((x-1)(y-1)-1)W^{(1)}(1,1/x,t)]=T_M(x,y).\]
\end{example}

%====================================================================

\end{document}